%%%%%%%%%%%%%%%%%%%%%%%%%%%%%%%%
\documentclass{elsart}
\usepackage{array,amsmath,amstext,amssymb,amsbsy,graphicx}
\usepackage{pdfsync}
 \usepackage{color}
\usepackage{txfonts}
\usepackage{subfigure}%并列的子图形
 %\def\baselinestretch{2}

%\numberwithin{equation}{section}
\newtheorem{definition}{Definition}
\newtheorem{lemma}{Lemma}%[section]
\newtheorem{proposition}{Proposition}%[section]
\newtheorem{theorem}{Theorem}%[section]
\newtheorem{remark}{Remark}%[section]
\newtheorem{property}{Property}%[section]
\newtheorem{example}{Example}
\newenvironment{proof}{\noindent \newline { Proof.}}
{\hfill \mbox{$\square$ } \newline}
%\date{}
%\textwidth 165 mm \textheight 250 mm \hoffset -2.0cm \voffset-3.0cm
%%%%%%%%%%%%%%%%%%%%%%%%%%%%%%%%%%%%%%%%%%%%%%%%%%%%%%%%%%%%%%%%%%%%%%%
\begin{document}
\begin{frontmatter}
\title{Well-posedness and numerical algorithm for the tempered fractional ordinary differential equations}
\author[CL]{Can Li} %\cortext[cor]{Corresponding author.}
\ead{mathlican@xaut.edu.cn}
\address[CL]{Beijing Computational Science Research Center, Beijing 10084, P.R. China.
\\Department of Applied Mathematics, School of Sciences,
Xi'an University of Technology, Xi'an, Shaanxi 710054, P.R. China.
}
\author[WHD]{Weihua Deng}
\address[WHD]{ School of Mathematics and Statistics,\\
Gansu Key Laboratory of Applied Mathematics and Complex Systems,
Lanzhou University, Lanzhou 730000, P.R. China.}
\ead{dengwh@lzu.edu.cn}

\author[ZLJ]{Lijing Zhao}
\address[ZLJ]{ School of Mathematics and Statistics,\\
Gansu Key Laboratory of Applied Mathematics and Complex Systems,
Lanzhou University, Lanzhou 730000, P.R. China.}
\ead{zhaolj10@lzu.edu.cn}
%%%%%%%%%%%%%%%%%%%%%%%%%%%%%%%%%%%%%%%%%%%%%%%%%%%%%%%%%%%%%%%%%%%%%%%
\date{}

\maketitle
\begin{abstract}
Trapped dynamics widely appears in nature, e.g., the motion of particles in viscous cytoplasm. The famous continuous time random walk (CTRW) model with power law waiting time distribution ({\em having diverging first moment}) describes this phenomenon. Because of the finite lifetime of biological particles, sometimes it is necessary to temper the power law measure such that the waiting time measure has convergent first moment. Then the time operator of the Fokker-Planck equation corresponding to the CTRW model with tempered waiting time measure is the so-called tempered fractional derivative. This paper focus on discussing the properties of the time tempered fractional derivative, and studying the well-posedness and the Jacobi-predictor-corrector algorithm for the tempered fractional ordinary differential equation. By adjusting the parameter of the proposed algorithm, any desired convergence order can be obtained and the computational cost linearly increases with time. And the effectiveness of the algorithm is numerically confirmed.
%In this paper, we are concerned with the well-posedness and numerical algorithm for
%the ordinary differential equations with tempered fractional derivatives.
%With the help of properties of classic fractional derivatives, we derive the relation of
%tempered fractional derivatives in the Riemann-Liouville's and Caputo's sense.
%Based on the presented properties, we discuss the Cauchy problems for the ordinary
%differential equations with tempered Riemann-Liouville  and tempered Caputo derivatives,
%respectively. The existence and  uniqueness of the solutions for the considered differential
%equations are proved by the the Banach fixed point theorem. Based on the Jacobi-Gauss-Lobatto
%quadrature formula, we
%design a  Jacobi-predictor-corrector algorithm for the considered tempered fractional
%ordinary differential equations.
%Numerical examples are given to demonstrate the efficiency of the proposed method.
\end{abstract}
\begin{keyword}
tempered fractional operators, well-posedness, Jacobi-predictor-corrector algorithm, convergence.
\end{keyword}
\end{frontmatter}

\section{Introduction}
The fractional calculus  has a long history.
The origin of fractional calculus can be traced
back to the letter between Leibniz and L'H\^{o}pital in 1695.
In the past three centuries, the development of the theories of
fractional calculus is well contributed
by many mathematicians and physicists. And from the last century, the books covering fractional calculus
began to emerge, such as Oldham and Spanier (1974),
Samko, Kilbas and Marichev (1993),
Podlubny (1999), and so on.
In recent years, more theories and experiments show that a broad
range of non-classical phenomena appeared in the applied sciences and
engineering can be described by fractional calculus
\cite{Podlubny:99,Metzler:00,Samko:93}.
Because of its good mathematical features,
nowadays fractional calculus has become a powerful tool in depicting
the anomalous kinetics which arises in physics, chemistry,
biology, finance, and other complex dynamics \cite{Metzler:00}.
In practical  applications, several different kinds of fractional derivatives,
such as  Riemann-Liouville  fractional derivative,
Caputo fractional derivative \cite{Podlubny:99,Samko:93},
Riesz fractional derivative\cite{Samko:93}, and
Hilfer fractional derivative\cite{Hilfer:00,Tomovski:12}
are introduced.

One of the typical applications for fractional
calculus is the description of anomalous diffusion behavior of living particles; and the tempered fractional calculus describes the transition between normal and anomalous diffusions (or the anomalous diffusion in finite time or bounded physical space).
In the continuous time random walk (CTRW) model,
for a L\'{e}vy flight particle, the scaling limit of the CTRW
with a jump distribution function $\phi(x)\sim x^{-(1+\alpha)} (1 < \alpha<2)$
exhibits superdiffusive dynamics.
The corresponding stable L\'{e}vy distribution for particle displacement contains arbitrarily
large jumps and has divergent spatial moments. However, the infinite spatial moments
may not be feasible for some physical processes \cite{Cartea:07}. One way
to overcome the divergence of the moments of L\'{e}vy  distributions
in transport models is to exponentially temper the L\'{e}vy measure.
Then the space fractional operator will be replaced by the spatially
tempered fractional operator in the corresponding models \cite{Cartea:07a,Cartea:07, Sabzikar:15}. This paper concentrates on the time tempered fractional derivative, which arises in the Fokker-Planck equation corresponding to the CTRW model with tempered power law waiting time distribution \cite{Schmidt:10,Gajda:11}. Tempering the power law waiting time measure makes its first moment finite and the trapped dynamics more physical. Sometimes it is necessary/reasonable to make the first moment of the waiting time measure finite, e.g., the biological particles moving in viscous cytoplasm and displaying trapped dynamical behavior just have finite lifetime.
%In addition, we know  that there is a link between the subordinated tempered stable
%process and the CTRW model and the tempered stable process can be a random walk limit \cite{Schmidt:10,Gajda:11}.
%
%If the tempered $\alpha$-stable waiting times is introduced in CTRW model, then we can
%get the time tempered diffusion dynamics \cite{Schmidt:10}.
The time tempered diffusion dynamics describes the coexistence/transition of subdiffusion and normal diffusion phenomenon (or the subdiffusion in finite time) which was empirically confirmed in a number of systems \cite{Cartea:07,Meerschaert:09}.
More applications for the tempered fractional derivatives and tempered differential equations can be found, for instance, in  poroelasticity \cite{Hanyga:2001},
finance \cite{Cartea:07a}, ground water hydrology \cite{Meerschaert:09,Meerschaert:12},
and geophysical flows \cite{Meerschaert:14}.

%\cite{Gajda:11}

Tempered fractional calculus can be recognized as the generalization of fractional calculus.
To the best of our knowledge,  the definitions of fractional integration with weak
singular and exponential kernels were firstly reported in Buschman's earlier work \cite{Buschman:72}. For the other different definitions of the tempered fractional integration, see the books \cite{Srivastava:77,Samko:93,Meerschaert:12} and references therein.
%However, as the generalization of fractional calculus, some mathematics properties of fractional calculus still not explored.
This work continues previous efforts \cite{Li:14} to explore the time tempered fractional derivative. The well-posedness, including existence, uniqueness, and stability, of the tempered fractional ordinary differential equation (ODE) is discussed, and the properties of the time tempered fractional derivative are analyzed. Then the Jacobi-predictor-corrector algorithm for the tempered fractional ODE is provided, which has the striking benefits: 1. any desired convergence order can be obtained by simply adjusting the parameter (the number of interpolation points); the computational cost increases linearly with the time $t$ instead of $t^2$ usually taken place for nonlocal time dependent problem. And extensive numerical experiments are performed to confirm these advantages.

%in explore the based properties and algorithm for tempered fractional
%ordinary differential equations.

In Section \ref{secdptfc}, we introduce the definitions and show the properties of the tempered fractional calculus, including the generalizations of the tempered fractional derivatives in the Riemann-Liouville and Caputo sense, and the composite property. More basic properties are listed and proved in Appendix A; the expression and properties of the tempered fractional calculus in Laplace space are proposed and proved in Appendix B. In Section \ref{secfour}, we discuss the initial value problem of the tempered fractional ODE: first derive the Volterra integral formulation of the tempered fractional ODE; then prove the well-posedness of the considered problem. The Jacobi-Predictor-Corrector algorithm for the tempered fractional ODE is designed and discussed in Section \ref{sec:algorithm}, and two numerical examples are solved by the algorithm to show its powerfulness.

%In Section \ref{secfour}
%we are concerned with Cauchy problems of fractional differential equations with tempered fractional derivatives.
%We first prove that considered tempered fractional ordinary differential equations is equivalent to the Volterra integral equations. Using the Banach's fixed point theorem, we prove the uniqueness and existence of the solution for the considered problems. In section \ref{sec:algorithm}, we design a Jacobian-Predictor-Corrector algorithm for the equivalent Volterra integral equations. Two numerical examples are provided to confirm the
%efficiency of the method. More properties of tempered
%fractional derivatives, such as semi-group, composite properties, and the Laplace transforms
% of the tempered fractional calculus are listed in the Appendixes.
%%%==============================================
\section{Preliminaries}\label{secdptfc}
In this section, we first give the definitions and some properties of the tempered fractional calculus.
Let $[a,b]$ be a finite interval on the real line $\mathbb{R}$. Denote $L([a,b])$ as the integrable space which includes the Lebesgue measurable functions on the finite interval $[a,b]$, i.e.,
$$L([a,b])=\bigg\{u:\|u\|_{L([a,b])}=\int_{a}^{b}|u(t)|dt<\infty\bigg\}.$$
And let $AC[a,b]$ be the space of real-values functions $u(t)$ which are absolutely continuous on $[a,b]$. For $n\in \mathbb{N}^{+}$, we denote $AC^n[a,b]$ as the space of real-values functions $u(t)$ which have continuous derivatives up
to order $n-1$ on $[a,b]$ such that $\frac{d^{n-1}u(t)}{dx^{n-1}}\in AC[a,b]$, i.e.,
$$AC^n[a,b]=\bigg\{u:[a,b]\rightarrow\mathbb{R},\frac{d^{n-1}}{dx^{n-1}}u(t)\in AC[a,b]\bigg\}.$$ And denote by $C^n[a,b]$ the  space of functions $u(t)$ which are $n$ times continuously differentiable on $[a,b]$.

\begin{definition} [\rm Riemann-Liouville tempered fractional integral \cite{Buschman:72,Cartea:07}] \label{Def1}
Suppose that the real function $u(t)$ is piecewise continuous on $(a,b)$ and  $u(t)\in L([a,b])$,  $\sigma>0, \lambda\ge 0$. The  Riemann-Liouville tempered fractional integral of order $\sigma$ is defined to be
\begin{equation}\label{TRLI}
_{a}I_t^{\sigma,\lambda}u(t)=e^{-\lambda t} {_{a}I}_t^{\sigma}\big(e^{\lambda t}u(t)\big)=\frac{1}{\Gamma(\sigma)}\int_{a}^te^{-\lambda( t-s)}(t-s)^{\sigma-1}u(s)ds,
\end{equation}
where $_{a}I_t^{\sigma}$ denotes the Riemann-Liouville fractional integral
\begin{equation}\label{RLIx}
_{a}I_t^{\sigma}u(t)=\frac{1}{\Gamma(\sigma)}\int_{a}^t(t-s)^{\sigma-1}u(s)ds.
\end{equation}
\end{definition}
%%%%%%%%====================================
Obviously, the tempered fractional integral \eqref{TRLI} reduces to the Riemann-Liouville fractional integral if $\lambda=0$. In practical applications, sometimes the fractional integral \eqref{TRLI} is represented as $_{a}D_t^{-\sigma,\lambda}u(t)$.

%%%%%%%%%%%%%%%%%%%%%%%======================
\begin{definition}[\rm Riemann-Liouville tempered  fractional derivative \cite{Baeumera:10,Cartea:07}]\label{defrlt}
For $n-1<\alpha<n,n\in \mathbb{N}^{+}, \lambda\ge 0$. %let $u(x) \in L([a,b])$ and $I^{n-\alpha,\lambda} u(t) \in AC^n[a,b]$.
The Riemann-Liouville tempered  fractional derivative
is defined by
\begin{equation}\label{TRLD}
_{a}D_t^{\alpha,\lambda}u(t)
=e^{-\lambda t}{{_{a}}D_t^{\alpha}}\big( e^{\lambda t}u(t)\big)=\frac{e^{-\lambda t}}{\Gamma(n-\alpha)}\frac{d^n}{d
t^n}\int_{a}^t\frac{e^{\lambda s}u(s)}{(t-s)^{\alpha-n+1}}ds,
\end{equation}
where $_{a}D_t^{\alpha}(e^{\lambda t}u(t))$ denotes the Riemann-Liouville fractional derivative \cite{Podlubny:99}
\begin{equation}\label{RLD}
      _{a}D_t^{\alpha}(e^{\lambda t}u(t))=\frac{d^n}{d
          t^n}\big(_{a}I_t^{n-\alpha}(e^{\lambda t}u(t))\big)=\frac{1}{\Gamma(n-\alpha)}\frac{d^n}{d
          t^n}\int_{a}^t\frac{(e^{\lambda s}u(s))}{(t-s)^{\alpha-n+1}}ds.
    \end{equation}
\end{definition}

%%%%%%%%%%%%%%%%%%%%%%%%%%%%%%%%%%%%
\begin{remark}[\cite{Baeumera:10}]\label{def44}
The variants of the  Riemann-Liouville tempered
fractional derivatives are defined as
\begin{equation}\label{Lrl}
_{a}\textbf{D}_t^{\alpha,\lambda}u(t)=
\begin{cases}
\displaystyle
_{a}D_t^{\alpha,\lambda}u(t)-\lambda^{\alpha}u(t),& \text{$0<\alpha<1$},\\
\displaystyle
_{a}D_t^{\alpha,\lambda}u(t)-\alpha\lambda^{\alpha-1}\frac{du(t)}{dt}-\lambda^{\alpha}u(t),& \text{$1<\alpha<2$}.
\end{cases}
\end{equation}
\end{remark}
%==================
%%%%%%%%%%%%%%%%%%%%%%%======================
\begin{definition}[\rm fractional substantial derivative \cite{Friedrich:06,Turgeman:09,Chen:13}]\label{defrlsf}
For $n-1<\alpha<n,n\in \mathbb{N}^{+}$, %let $u(x) \in L([a,b])$ and $I^{n-\alpha,\lambda(x)} u(t) \in AC^n[a,b]$,
and $\lambda(x)$ being any given function defined in space domain.
The Riemann-Liouville fractional substantial derivative is defined by
\begin{equation}\label{suTRLD}
D_s^{\alpha,\lambda(x)}u(t)
=\bigg(\frac{d}{d
t}+\lambda(x)\bigg)^n{_{a}I_t^{n-\alpha,\lambda(x)}}u(t)=\bigg(\frac{d}{d
t}+\lambda(x)\bigg)^n\int_{a}^t\frac{e^{-\lambda(x) \cdot (t-s)}u(s)}{(t-s)^{\alpha-n+1}}ds,
\end{equation}
where ${_{a}I_t^{n-\alpha,\lambda(x)}}$ denotes the Riemann-Liouville fractional integral and
\begin{equation}\label{ddlammda}
\bigg(\frac{d}{d t}+\lambda(x)\bigg)^n=\underbrace{\bigg(\frac{d}{d t}+\lambda(x)\bigg)\cdots\bigg(\frac{d}{d t}+\lambda(x)\bigg)}_{n\ times}.
\end{equation}
\end{definition}

\begin{remark}\label{sgltem}
The fractional substantial derivative \eqref{suTRLD} is equivalent
to the Riemann-Liouville tempered  fractional derivative \eqref{TRLD} if $\lambda(x)$ is a nonnegative constant function. In fact, using integration by parts leads to
%\begin{equation*}
%\begin{split}
%&\bigg(\frac{d}{d
%t}+\lambda\bigg)\int_{a}^t\frac{e^{-\lambda (t-s)}u(s)}{(t-s)^{\alpha-n+1}}ds\\
%&=\frac{d}{d
%t}e^{-\lambda t}\int_{a}^t\frac{e^{\lambda s}u(s)}{(t-s)^{\alpha-n+1}}ds+\lambda\int_{a}^t\frac{e^{-\lambda (t-s)}u(s)}{(t-s)^{\alpha-n+1}}ds\\
%     &=\frac{d}{d
%t}\int_{a}^t\frac{e^{\lambda s}u(s)}{(t-s)^{\alpha-n+1}}ds-\lambda\int_{a}^t\frac{e^{-\lambda (t-s)}u(s)}{(t-s)^{\alpha-n+1}}ds+\lambda\int_{a}^t\frac{e^{-\lambda (t-s)}u(s)}{(t-s)^{\alpha-n+1}}ds\\
%     &=e^{-\lambda t}\frac{d}{d
%t}\int_{a}^t\frac{e^{\lambda s}u(s)}{(t-s)^{\alpha-n+1}}ds,
%\end{split}
%\end{equation*}
%it follows that
%\begin{equation*}
%\begin{split}
%&\bigg(\frac{d}{d
%t}+\lambda\bigg)\bigg[\bigg(\frac{d}{d
%t}+\lambda\bigg)\int_{a}^t\frac{e^{-\lambda (t-s)}u(s)}{(t-s)^{\alpha-n+1}}ds\bigg]\\
%&=\bigg(\frac{d}{d
%t}+\lambda\bigg)\bigg[e^{-\lambda t}\frac{d}{d
%t}\int_{a}^t\frac{e^{\lambda s}u(s)}{(t-s)^{\alpha-n+1}}ds\bigg]\\
%&=e^{-\lambda t}\frac{d^2}{d
%t^2}\int_{a}^t\frac{e^{\lambda s}u(s)}{(t-s)^{\alpha-n+1}}ds-\lambda e^{-\lambda t}\frac{d}{d
%t}\int_{a}^t\frac{e^{\lambda s}u(s)}{(t-s)^{\alpha-n+1}}ds+\lambda e^{-\lambda t}\frac{d}{d
%t}\int_{a}^t\frac{e^{\lambda s}u(s)}{(t-s)^{\alpha-n+1}}ds   \\
%&=e^{-\lambda t}\frac{d^2}{d
%t^2}\int_{a}^t\frac{e^{\lambda s}u(s)}{(t-s)^{\alpha-n+1}}ds,
%\end{split}
%\end{equation*}
%Hence,
\begin{equation*}
\begin{split}
    & \bigg(\frac{d}{d
t}+\lambda(x)\bigg)^{n}\bigg[\int_{a}^t\frac{e^{-\lambda(x)\cdot (t-s)}u(s)}{(t-s)^{\alpha-n+1}}ds\bigg]\\
     &=\bigg(\frac{d}{d
t}+\lambda(x)\bigg)^{n-1}\bigg[\bigg(\frac{d}{d
t}+\lambda(x)\bigg)\int_{a}^t\frac{e^{-\lambda(x) \cdot (t-s)}u(s)}{(t-s)^{\alpha-n+1}}ds\bigg]\\
     &=\bigg(\frac{d}{d
t}+\lambda(x)\bigg)^{n-1}\bigg[e^{-\lambda(x) t}\frac{d}{d
t}\int_{a}^t\frac{e^{\lambda(x) s}u(s)}{(t-s)^{\alpha-n+1}}ds\bigg]\\
&=\bigg(\frac{d}{d
t}+\lambda(x)\bigg)^{n-2}\bigg[e^{-\lambda(x) t}\frac{d^2}{d
t^2}\int_{a}^t\frac{e^{\lambda(x) s}u(s)}{(t-s)^{\alpha-n+1}}ds\bigg]\\
     &=\cdots\\
     &={_{a}D_t^{\alpha,\lambda(x)}}u(t).
\end{split}
\end{equation*}
The tempered $n$-th order derivative of $u(t)$ equals to $\left(\frac{d}{d
t}+\lambda\right)^{n}u(t)$, which can be simply/resonably denoted as ${D^{n,\lambda}}u(t)$.
\end{remark}

\begin{definition}[\rm Caputo tempered  fractional derivative \cite{Samko:93,Tatar:04}]\label{def33c}
For $n-1<\alpha<n,n\in \mathbb{N}^{+},\lambda\ge0$. %$u(t)\in AC^{n}[a,b]$.
The Caputo tempered fractional derivative is defined as
    \begin{equation}\label{TCD}
      {_{a}^{C}D}_t^{\alpha,\lambda}u(t)=e^{-\lambda t}~{_{a}^{C}D}_t^{\alpha}\big(e^{\lambda t}u(t)\big)=\frac{e^{-\lambda t}}{\Gamma(n-\alpha)}\int_{a}^t\frac{1}{(t-s)^{\alpha-n+1}}\frac{d^n (e^{\lambda s}u(s))}{d
          s^n}ds,
    \end{equation}
where
 ${_{a}^{C}D}_t^{\alpha,\lambda}(e^{\lambda t}u(t))$  denotes
 the Caputo fractional derivative \cite{Podlubny:99}
\begin{equation}\label{CD}
      {_{a}^{C}D}_t^{\alpha}(e^{\lambda t}u(t))=\frac{1}{\Gamma(n-\alpha)}\int_{a}^t\frac{1}{(t-s)^{\alpha-n+1}}\frac{d^n(e^{\lambda s}u(s))}{d
          s^n}ds.
    \end{equation}
\end{definition}
\begin{remark}
The equivalent forms of Riemann-Liouville tempered fractional derivative (\ref{TRLD}) and Caputo tempered fractional derivative (\ref{TCD}) are ${_{a}D}_t^{\alpha,\lambda}u(t)={D}^{n,\lambda}{_{a}I_t^{n-\alpha,\lambda}}u(t)$  and ${_{a}^{C}D}_t^{\alpha,\lambda}u(t)={_{a}I_t^{n-\alpha,\lambda}}\, {D}^{n,\lambda}u(t)$, respectively.
\end{remark}

Note that when $\lambda=0$, the Riemann-Liouville (Caputo) tempered fractional derivative
reduces to the Riemann-Liouville (Caputo) fractional derivative.
%%%%%%%%====================================
\begin{proposition} \label {lemma2as}
Let $u(t)\in AC^{n}[a,b]$ and $n-1<\alpha<n$. Then for all $t \geq a$,
holds
\begin{equation}\label{glcrelation}
{_{a}^{C}D_t^{\alpha,\lambda}} \big(u(t)\big)={_{a}D_t^{\alpha,\lambda}}\big(u(t)\big)-\sum_{k=0}^{n-1}\frac{e^{-\lambda t}(t-a)^{k-\alpha}}{\Gamma(k-\alpha+1)}\bigg[\frac{d^{k}}{dt^{k}}\big(e^{\lambda t}u(t)\big)\bigg|_{t=a}\bigg].
\end{equation}
\end{proposition}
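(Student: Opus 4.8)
The plan is to reduce the claim to the classical relation between the Caputo and Riemann--Liouville fractional derivatives and then conjugate by $e^{\lambda t}$. By Definitions \ref{defrlt} and \ref{def33c}, ${_{a}D}_t^{\alpha,\lambda}u(t)=e^{-\lambda t}\,{_{a}D}_t^{\alpha}\big(e^{\lambda t}u(t)\big)$ and ${_{a}^{C}D}_t^{\alpha,\lambda}u(t)=e^{-\lambda t}\,{_{a}^{C}D}_t^{\alpha}\big(e^{\lambda t}u(t)\big)$, so it suffices to prove, for every $v\in AC^{n}[a,b]$,
\begin{equation}\label{classicalCGL}
{_{a}^{C}D}_t^{\alpha}v(t)={_{a}D}_t^{\alpha}v(t)-\sum_{k=0}^{n-1}\frac{(t-a)^{k-\alpha}}{\Gamma(k-\alpha+1)}\,v^{(k)}(a),
\end{equation}
and then apply \eqref{classicalCGL} with $v(t)=e^{\lambda t}u(t)$ and multiply through by $e^{-\lambda t}$, observing that $v^{(k)}(a)=\big[\tfrac{d^{k}}{dt^{k}}(e^{\lambda t}u(t))\big]_{t=a}$, which reproduces the stated formula.

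The first step is to verify that $v(t)=e^{\lambda t}u(t)$ indeed lies in $AC^{n}[a,b]$ whenever $u\in AC^{n}[a,b]$, so that \eqref{classicalCGL} is applicable. By the Leibniz rule, $v^{(n-1)}(t)=\sum_{j=0}^{n-1}\binom{n-1}{j}\lambda^{\,n-1-j}e^{\lambda t}u^{(j)}(t)$; every summand is the product of the $C^{\infty}$ factor $\binom{n-1}{j}\lambda^{\,n-1-j}e^{\lambda t}$ with either a continuously differentiable function (for $j\le n-2$, since then $u^{(j)}\in C^{1}[a,b]$) or an absolutely continuous function (for $j=n-1$), hence $v^{(n-1)}\in AC[a,b]$ and $v\in AC^{n}[a,b]$.

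For \eqref{classicalCGL} itself I would start from the Taylor formula with integral remainder, valid for $v\in AC^{n}[a,b]$,
\begin{equation*}
v(t)=\sum_{k=0}^{n-1}\frac{v^{(k)}(a)}{k!}(t-a)^{k}+{_{a}I}_t^{n}v^{(n)}(t),
\end{equation*}
and apply ${_{a}D}_t^{\alpha}$ to both sides. The monomials are handled by ${_{a}D}_t^{\alpha}(t-a)^{k}=\frac{\Gamma(k+1)}{\Gamma(k-\alpha+1)}(t-a)^{k-\alpha}$ for $0\le k\le n-1$, and the remainder by the composition rule ${_{a}D}_t^{\alpha}\,{_{a}I}_t^{n}w={_{a}I}_t^{n-\alpha}w$ for $w\in L([a,b])$ (which uses ${_{a}D}_t^{\alpha}={D}^{n}{_{a}I}_t^{n-\alpha}$, the semigroup property of the Riemann--Liouville integrals, and ${D}^{n}{_{a}I}_t^{n}=\mathrm{Id}$). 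This gives ${_{a}D}_t^{\alpha}v(t)={_{a}I}_t^{n-\alpha}v^{(n)}(t)+\sum_{k=0}^{n-1}\frac{(t-a)^{k-\alpha}}{\Gamma(k-\alpha+1)}v^{(k)}(a)$, and since ${_{a}I}_t^{n-\alpha}v^{(n)}(t)={_{a}^{C}D}_t^{\alpha}v(t)$ by definition, rearranging yields \eqref{classicalCGL}. Alternatively, \eqref{classicalCGL} may simply be quoted from \cite{Samko:93} or \cite{Podlubny:99}.

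The main obstacle is making the composition steps rigorous in the $AC^{n}$ rather than $C^{n}$ setting: justifying termwise application of ${_{a}D}_t^{\alpha}$ to the Taylor expansion, the identity ${_{a}D}_t^{\alpha}\,{_{a}I}_t^{n}w={_{a}I}_t^{n-\alpha}w$ for a merely integrable $w=v^{(n)}$, and the legitimacy of differentiating the convolution ${_{a}I}_t^{n-\alpha}({_{a}I}_t^{\alpha}w)={_{a}I}_t^{n}w$ exactly $n$ times. All of these follow from the standard mapping and composition properties of the Riemann--Liouville operators on $L([a,b])$ collected in Appendix A; everything else — the action of ${_{a}D}_t^{\alpha}$ on monomials and the conjugation by $e^{\pm\lambda t}$ — is elementary.
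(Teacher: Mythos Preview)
Your proof is correct and follows essentially the same route as the paper: both reduce to the classical Caputo--Riemann--Liouville relation for $v(t)=e^{\lambda t}u(t)$, then multiply by $e^{-\lambda t}$ and compute ${_{a}D}_t^{\alpha}(t-a)^{k}$ explicitly. The paper simply cites the classical identity from \cite{Podlubny:99,Li:07,Samko:93} rather than deriving it via the Taylor remainder, and it omits the verification that $v\in AC^{n}[a,b]$, so your argument is in fact a slightly more detailed version of the same proof.
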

\begin{proof}
Take $v(t)=e^{\lambda t}u(t)$ in the equation for the Riemann-Liouville and Caputo fractional derivatives \cite{Podlubny:99,Li:07,Samko:93}
$$_{a}^{C}D_t^{\alpha} v(t)={_{a}D_t^{\alpha}}\bigg(v(t)-\sum_{k=0}^{n-1}\frac{(t-a)^k}{k!}\frac{d^{k}v(t)}{dt^{k}}\big|_{t=a}\bigg),$$
yielding
$$_{a}^{C}D_t^{\alpha} \big(e^{\lambda t}u(t)\big)={_{a}D_t^{\alpha}}\bigg(e^{\lambda t}u(t)-\sum_{k=0}^{n-1}\frac{(t-a)^k}{k!}\frac{d^{k}}{dt^{k}}\big(e^{\lambda t}u(t)\big)\big|_{t=a}\bigg).$$
Multiplying both sides of the above equation by $e^{-\lambda t}$, we obtain
$$e^{-\lambda t}~{_{a}^{C}D_t^{\alpha}} \big(e^{\lambda t}u(t)\big)=e^{-\lambda t}{_{a}D_t^{\alpha}}\bigg(e^{\lambda t}u(t)-\sum_{k=0}^{n-1}\frac{(t-a)^k}{k!}\frac{d^{k}}{dt^{k}}\big(e^{\lambda t}u(t)\big)\big|_{t=a}\bigg).$$
Furthermore, using the definitions of Riemann-Liouville and Caputo tempered fractional derivatives, we get that
\begin{equation}\label{glcrelationa}
{_{a}^{C}D_t^{\alpha,\lambda}} \big(u(t)\big)={_{a}D_t^{\alpha,\lambda}}\big(u(t)\big)-\sum_{k=0}^{n-1}e^{-\lambda t}{_{a}D_t^{\alpha}}\bigg(\frac{(t-a)^k}{k!}\bigg)\bigg[\frac{d^{k}}{dt^{k}}\big(e^{\lambda t}u(t)\big)\bigg|_{t=a}\bigg].
\end{equation}
Using the linearity properties presented in Proposition \ref{lemma2ab} and the formula of power function
$${_{a}D_t^{\alpha}}\bigg(\frac{(t-a)^k}{k!}\bigg)=\frac{1}{k!}{_{a}D_t^{\alpha}}\bigg((t-a)^k\bigg)
=\frac{1}{k!}\frac{\Gamma(k+1)(t-a)^{k-\alpha}}{\Gamma(k-\alpha+1)},$$
we  deduce  the desired result from \eqref{glcrelationa}.
\end{proof}

%%%%%%%%====================================
\begin{proposition} \label {lemma2a}(Composite properties)
%Let $u(t)\in AC^n[a,b]$ and $n-1<\alpha<n$. Then

(1) Let $u(x) \in L([a,b])$ and $I^{n-\alpha,\lambda} u(t) \in AC^n[a,b]$. Then the Riemann-Liouville tempered fractional derivative and integral have the composite properties
\begin{equation}\label{rca}
_{a}I_t^{\alpha,\lambda}[_{a}D_t^{\alpha,\lambda} u(t)]=u(t)-\sum_{k=0}^{n-1}\frac{e^{-\lambda t}(t-a)^{\alpha-k-1}}{\Gamma(\alpha-k)}\big[{_{a}D}_t^{\alpha-k-1}(e^{\lambda t}u(t))\big|_{t=a}\big],
\end{equation}
and
\begin{equation}\label{rcb}
_{a}D_t^{\alpha,\lambda}[_{a}I_t^{\alpha,\lambda} u(t)]=u(t).
\end{equation}
(2) Let $u(t)\in AC^n[a,b]$ and $n-1<\alpha<n$. Then the Caputo tempered fractional derivative and the Riemann-Liouville tempered fractional integral have the composite properties
\begin{equation}\label{rcaa}
_{a}I_t^{\alpha,\lambda}[_{a}^{C}D_t^{\alpha,\lambda} u(t)]=u(t)-\sum_{k=0}^{n-1}e^{-\lambda t}\frac{(t-a)^k}{k!}\bigg[\frac{d^k (e^{\lambda t}u(t))}{dt^k}\bigg|_{t=a}\bigg],
\end{equation}
and
\begin{equation}\label{rcab}
{_{a}^{C}D}_t^{\alpha,\lambda}[_{a}I_t^{\alpha,\lambda} u(t)]=u(t) ~~~{\rm if}~~ \alpha \in (0,1).
\end{equation}
\end{proposition}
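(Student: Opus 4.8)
The plan is to reduce each of the four identities to the corresponding classical (untempered) composite property by exploiting the conjugation structure built into Definitions \ref{Def1}, \ref{defrlt} and \ref{def33c}: every tempered operator is its classical counterpart sandwiched between multiplication by $e^{\lambda t}$ on the inside and by $e^{-\lambda t}$ on the outside. Accordingly I would set $v(t)=e^{\lambda t}u(t)$. Since $[a,b]$ is finite, $e^{\lambda t}$ and $e^{-\lambda t}$ together with all their derivatives are bounded on $[a,b]$, so multiplication by $e^{\pm\lambda t}$ maps $L([a,b])$ into itself and $AC^n[a,b]$ into itself. Hence $u\in L([a,b])$ if and only if $v\in L([a,b])$, $u\in AC^n[a,b]$ if and only if $v\in AC^n[a,b]$, and, because ${_{a}I}_t^{n-\alpha,\lambda}u=e^{-\lambda t}\,{_{a}I}_t^{n-\alpha}v$, the hypothesis ${_{a}I}_t^{n-\alpha,\lambda}u\in AC^n[a,b]$ is equivalent to ${_{a}I}_t^{n-\alpha}v\in AC^n[a,b]$. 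So the regularity assumed on $u$ is exactly what is needed to apply the classical results to $v$.

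For \eqref{rca} I would write out the composition and note that the inner $e^{\lambda t}$ produced by ${_{a}I}_t^{\alpha,\lambda}$ cancels the outer $e^{-\lambda t}$ produced by ${_{a}D}_t^{\alpha,\lambda}$, so that ${_{a}I}_t^{\alpha,\lambda}[{_{a}D}_t^{\alpha,\lambda}u(t)]=e^{-\lambda t}\,{_{a}I}_t^{\alpha}\big[{_{a}D}_t^{\alpha}v(t)\big]$. Into the bracket I substitute the classical identity ${_{a}I}_t^{\alpha}[{_{a}D}_t^{\alpha}v(t)]=v(t)-\sum_{k=0}^{n-1}\frac{(t-a)^{\alpha-k-1}}{\Gamma(\alpha-k)}\big[{_{a}D}_t^{\alpha-k-1}v(t)\big|_{t=a}\big]$ (valid under ${_{a}I}_t^{n-\alpha}v\in AC^n[a,b]$; see \cite{Podlubny:99,Samko:93}), multiply through by $e^{-\lambda t}$ and resubstitute $v=e^{\lambda t}u$ to arrive at \eqref{rca}. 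Identity \eqref{rcb} is the same computation with the simpler classical fact ${_{a}D}_t^{\alpha}[{_{a}I}_t^{\alpha}v]=v$: again the exponentials cancel in the middle, leaving ${_{a}D}_t^{\alpha,\lambda}[{_{a}I}_t^{\alpha,\lambda}u]=e^{-\lambda t}v=u$.

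The Caputo statements follow the identical template, now with ${_{a}^{C}D}_t^{\alpha,\lambda}u=e^{-\lambda t}\,{_{a}^{C}D}_t^{\alpha}v$. For \eqref{rcaa} I would use the classical formula ${_{a}I}_t^{\alpha}[{_{a}^{C}D}_t^{\alpha}v]=v-\sum_{k=0}^{n-1}\frac{(t-a)^k}{k!}\frac{d^{k}v}{dt^{k}}\big|_{t=a}$ together with $\frac{d^{k}}{dt^{k}}v\big|_{t=a}=\frac{d^{k}}{dt^{k}}(e^{\lambda t}u)\big|_{t=a}$, which reproduces the boundary terms displayed in the statement; for \eqref{rcab} I would use ${_{a}^{C}D}_t^{\alpha}[{_{a}I}_t^{\alpha}v]=v$ for $\alpha\in(0,1)$, and the cancellation of exponentials again yields $u$. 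The finite sums are split using the linearity of the tempered operators (Proposition \ref{lemma2ab}). The only point that needs genuine care is the transfer of hypotheses under $v=e^{\lambda t}u$ handled in the first paragraph; once that is in place the four identities are immediate, and no limiting or compactness arguments are required.
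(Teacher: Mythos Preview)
Your proposal is correct and follows essentially the same approach as the paper: both arguments reduce each tempered identity to its classical counterpart by setting $v(t)=e^{\lambda t}u(t)$, using the conjugation structure of Definitions \ref{Def1}, \ref{defrlt}, \ref{def33c} to collapse the composition into $e^{-\lambda t}$ times the untempered operator acting on $v$, and then invoking the standard Riemann--Liouville and Caputo composition formulas from \cite{Podlubny:99,Samko:93}. Your explicit verification that multiplication by $e^{\pm\lambda t}$ transfers the regularity hypotheses on $u$ to the required hypotheses on $v$ is a point the paper leaves implicit, so in that respect your write-up is slightly more careful.
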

\begin{proof}
From the definitions of Riemann-Liouville tempered fractional integral
and derivative, we have
\begin{equation}\label{imca}
\begin{split}
    _{a}I_t^{\alpha,\lambda} [_{a}D_t^{\alpha,\lambda} u(t)]&=
     e^{-\lambda t}{{_{a}}I_t^{\alpha}}\big[ e^{\lambda t}(_{a}D_t^{\alpha,\lambda} u(t))\big]      \\
     &=e^{-\lambda t}{{_{a}}I_t^{\alpha}}\big[ e^{\lambda t}\big(e^{-\lambda t}{{_{a}}D_t^{\alpha}}( e^{\lambda t}u(t))\big)\big]    \\
     &=e^{-\lambda t}
     \underbrace{{{_{a}}I_t^{\alpha}}\big[{{_{a}}D_t^{\alpha}}( e^{\lambda t}u(t))\big]}_{(I)}.
\end{split}
\end{equation}
Thanks to the composition formula \cite{Podlubny:99,Li:07,Samko:93}
\begin{equation*}
_{a}I_t^{\alpha}[_{a}D_t^{\alpha} v(t)]=v(t)-\sum_{k=0}^{n-1}\frac{(t-a)^{\alpha-k-1}}{\Gamma(\alpha-k)}\big[{_{a}D}_t^{\alpha-k-1}(v(t))\big|_{t=a}\big],
\end{equation*}
we get
\begin{equation*}
(I)={_{a}I_t^{\alpha}}[_{a}D_t^{\alpha} e^{\lambda t}u(t)]=e^{\lambda t}u(t)-\sum_{k=0}^{n-1}\frac{(t-a)^{\alpha-k-1}}{\Gamma(\alpha-k)}\big[{_{a}D}_t^{\alpha-k-1}(e^{\lambda t}u(t))\big|_{t=a}\big].
\end{equation*}
Inserting the above formula into \eqref{imca} leads to \eqref{rca}.

Again from the definitions of Riemann-Liouville tempered fractional integral
and derivative, there exists
\begin{equation*}
\begin{split}
     _{a}D_t^{\alpha,\lambda}[_{a}I_t^{\alpha,\lambda} u(t)]&=
     e^{-\lambda t}{{_{a}}D_t^{\alpha}}\big[ e^{\lambda t}(_{a}I_t^{\alpha,\lambda} u(t))\big]      \\
     &=e^{-\lambda t}{{_{a}}D_t^{\alpha}}\big[ e^{\lambda t}\big(e^{-\lambda t}{{_{a}}I_t^{\alpha}}( e^{\lambda t}u(t))\big)\big]    \\
     &=e^{-\lambda t}{{_{a}}D_t^{\alpha}}\big[{{_{a}}I_t^{\alpha}}( e^{\lambda t}u(t))\big].
\end{split}
\end{equation*}
Furthermore, using the composite properties of Riemann-Liouville fractional integral
and derivative \cite{Podlubny:99,Li:07,Samko:93}
\begin{equation}\label{igr}
{{_{a}}D_t^{\alpha}}\big[{{_{a}}I_t^{\alpha}}( v(t))\big]=v(t),
\end{equation} we get
\begin{equation*}
_{a}D_t^{\alpha,\lambda}[_{a}I_t^{\alpha,\lambda} u(t)]
=e^{-\lambda t}{{_{a}}D_t^{\alpha}}\big[{{_{a}}I_t^{\alpha}}( e^{\lambda t}u(t))\big]=u(t),
\end{equation*}
by taking $v(t)=e^{\lambda t}u(t)$ in Eq. \eqref{igr}.

Similarly, using the  composite properties of Caputo fractional derivative \cite{Podlubny:99,Li:07,Samko:93}
\begin{equation*}
_{a}I_t^{\alpha}[{_{a}^{C}D_t^{\alpha}} v(t)]=v(t)-\sum_{k=0}^{n-1}\frac{(t-a)^k}{k!}\bigg[\frac{d^kv(t)}{dt^k}\bigg]\bigg|_{t=a},
\end{equation*}
and
\begin{equation}\label{icr}
{{_{a}^{C}}D_t^{\alpha}}\big[{{_{a}}I_t^{\alpha}}( v(t))\big]=v(t),  ~~~{\rm if}~~ \alpha \in (0,1),
\end{equation}
we can get \eqref{rcaa} and \eqref{rcab}.

\end{proof}
%%%%%%%%5=======================================
\begin{remark}
For a constant $C$,
\begin{equation}\label{Tcfa}
{_{a}D_t^{\alpha,\lambda}}C=Ce^{-\lambda t}{_aD_t^\alpha e^{\lambda t}}, \quad {_{a}^CD_t^{\alpha,\lambda}}C=Ce^{-\lambda t}\,{_a^CD_t^\alpha e^{\lambda t}}.
\end{equation}
%\begin{equation}\label{Tcfa}
%      _{a}D_t^{\alpha,\lambda}(C)=\frac{e^{-\lambda t}C}{\Gamma(n-\alpha)}\frac{d^n}{d
%         t^n}w(t), \quad {_{a}^{C}D}_t^{\alpha,\lambda}(C)=\frac{e^{-\lambda t}C\lambda^{n}}{\Gamma(n-\alpha)}w(t),
%    \end{equation}
%where
%\begin{equation}\label{Tcfb}
%     w(t)=\int_{a}^t\frac{e^{\lambda s}}{(t-s)^{\alpha-n+1}}ds
%    \end{equation}
Obviously, $_{a}D_t^{\alpha,\lambda}(C)\neq {_{a}^{C}D}_t^{\alpha,\lambda}(C)$. And ${_{a}^{C}D}_t^{\alpha,\lambda}(C)$ is no longer equal to zero, being different from ${_{a}^{C}D}_t^{\alpha}(C)\equiv 0$.

%And the variants of the Riemann-Liouville tempered fractional derivatives given in Definition \ref{def44} is extensively used
% in real application \cite{Meerschaert:09, Meerschaert:12}, but the mathematical theories derived in this section maybe lost.

\end{remark}

%%%%================================================
\section{Well-posedness of the tempered fractional ordinary differential equations}\label{secfour}
In this section, we consider the ODEs with Riemann-Liouville and Caputo tempered fractional derivatives, respectively, i.e.,
\begin{equation}\label{pRrla}
\begin{cases}
\displaystyle
_{a}D_t^{\alpha,\lambda}u(t)=f(t,u(t)),~ \text{$n-1<\alpha<n,\lambda\geq0,$}\\
\displaystyle
\big[{_{a}D}_t^{\alpha-k-1}\big(e^{\lambda t}u(t)\big)\big]\big|_{t=a}=g_k,~\text{$k=0,1,2,\cdots,n-1$},
\end{cases}
\end{equation}
and
\begin{equation}\label{pRrlb}
\begin{cases}
\displaystyle
 {_{a}^{C}D}_t^{\alpha,\lambda}u(t)=f(t,u(t)),~ \text{$n-1<\alpha<n,\lambda\geq0,$}\\
\displaystyle
\bigg[\frac{d^{k}}{dt^{k}}(e^{\lambda t}u(t))\bigg]\bigg|_{t=a}=c_k,~\text{$k=0,1,2,\cdots,n-1$}.
\end{cases}
\end{equation}
The Cauchy problems \eqref{pRrla} and \eqref{pRrlb} can be converted to the equivalent Volterra integral equations of the second kind under some proper conditions.
%%%%%%%%%%%%%%%%%%%5
\begin{lemma}\label{gldf}
If the function $f(t,u(t))$ and $u(t)$ belong to $L([a,b])$, then $u(t)$ is solution of the initial value problem \eqref{pRrla}
if and only if $u(t)$ is the solution of the Volterra integral equation of the second kind
 \begin{equation}\label{integral}
u(t)=\sum_{k=0}^{n-1}g_k\frac{e^{-\lambda t}(t-a)^{\alpha-k-1}}{\Gamma(\alpha-k)}
+ \frac{1}{\Gamma(\alpha)}\int_{a}^t e^{-\lambda( t-s)} (t-s)^{\alpha-1} f(s,u(s))ds.
\end{equation}
In particular, if $0<\alpha<1$, then $u(t)$ satisfies the Cauchy problem \eqref{pRrla} if and only if
$u(t)$ satisfies the following integral equation
\begin{equation}\label{integralb}
u(t)=
g_0\frac{e^{-\lambda t}(t-a)^{\alpha-1}  }{\Gamma(\alpha)}
+
\frac{1}{\Gamma(\alpha)}\int_{a}^t e^{-\lambda( t-s)} (t-s)^{\alpha-1} f(s,u(s))ds.
\end{equation}
\end{lemma}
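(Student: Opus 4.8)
The unifying observation is that the substitution $v(t)=e^{\lambda t}u(t)$ turns \eqref{pRrla} into the classical Riemann--Liouville Cauchy problem $_{a}D_t^{\alpha}v(t)=e^{\lambda t}f\big(t,e^{-\lambda t}v(t)\big)$ with data $[{_{a}D}_t^{\alpha-k-1}v(t)]|_{t=a}=g_k$, because $_{a}D_t^{\alpha,\lambda}u(t)=e^{-\lambda t}{_{a}D_t^{\alpha}}(e^{\lambda t}u(t))$ by Definition \ref{defrlt}; so one could simply transport the classical equivalence theorem. I will instead argue directly with the composite properties already established in Proposition \ref{lemma2a}. For necessity, assume $u(t)$ solves \eqref{pRrla}, apply $_{a}I_t^{\alpha,\lambda}$ to $_{a}D_t^{\alpha,\lambda}u(t)=f(t,u(t))$, and invoke \eqref{rca}:
\begin{equation*}
u(t)-\sum_{k=0}^{n-1}\frac{e^{-\lambda t}(t-a)^{\alpha-k-1}}{\Gamma(\alpha-k)}\big[{_{a}D}_t^{\alpha-k-1}(e^{\lambda t}u(t))\big|_{t=a}\big]={_{a}I_t^{\alpha,\lambda}}f(t,u(t)).
\end{equation*}
Substituting the initial data $[{_{a}D}_t^{\alpha-k-1}(e^{\lambda t}u(t))]|_{t=a}=g_k$ and expanding $_{a}I_t^{\alpha,\lambda}f(t,u(t))$ by Definition \ref{Def1} gives \eqref{integral} at once.

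For sufficiency, start from $u(t)$ satisfying \eqref{integral} and apply $_{a}D_t^{\alpha,\lambda}$ to both sides. The power terms are annihilated: ${_{a}D_t^{\alpha,\lambda}}\big(e^{-\lambda t}(t-a)^{\alpha-k-1}\big)=e^{-\lambda t}{_{a}D_t^{\alpha}}\big((t-a)^{\alpha-k-1}\big)=0$ for $k=0,\dots,n-1$, since these powers span the kernel of $_{a}D_t^{\alpha}$; the remaining term equals $_{a}I_t^{\alpha,\lambda}f(t,u(t))$, so \eqref{rcb} yields $_{a}D_t^{\alpha,\lambda}[{_{a}I_t^{\alpha,\lambda}}f(t,u(t))]=f(t,u(t))$, i.e.\ $u$ solves the tempered ODE. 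To recover the $k$-th initial value, multiply \eqref{integral} by $e^{\lambda t}$, apply $_{a}D_t^{\alpha-k-1}$, and let $t\to a^{+}$; using ${_{a}D_t^{\alpha-j-1}}(t-a)^{\alpha-k-1}=\frac{\Gamma(\alpha-k)}{\Gamma(j-k+1)}(t-a)^{j-k}$ (equal to $g_j$ at $t=a$ when $k=j$, vanishing for $k<j$, and identically zero for $k>j$ because $1/\Gamma(j-k+1)=0$) together with ${_{a}D_t^{\alpha-j-1}}\big[{_{a}I_t^{\alpha}}(e^{\lambda s}f(s,u(s)))\big]={_{a}I_t^{j+1}}(e^{\lambda s}f(s,u(s)))\to0$ as $t\to a^{+}$, one obtains $[{_{a}D}_t^{\alpha-k-1}(e^{\lambda t}u(t))]|_{t=a}=g_k$.

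The case $0<\alpha<1$ is the specialization $n=1$: the sum reduces to its $k=0$ term and \eqref{integral} becomes \eqref{integralb}. The step needing the most care is the regularity bookkeeping around Proposition \ref{lemma2a}: to apply \eqref{rca} one needs $_{a}I_t^{n-\alpha,\lambda}u\in AC^n[a,b]$, which is precisely the condition making $_{a}D_t^{\alpha,\lambda}u$ meaningful; and in the sufficiency direction one must verify that the $u$ defined by \eqref{integral} actually enjoys this property, as well as justify interchanging $_{a}D_t^{\alpha-k-1}$ with the merely $L^{1}$ integral term and passing to the limit $t\to a^{+}$ inside it. Once these points are settled, the remainder is the routine computation inherited from the classical theory through $v(t)=e^{\lambda t}u(t)$.
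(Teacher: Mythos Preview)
Your proof is correct and follows essentially the same route as the paper: apply $_{a}I_t^{\alpha,\lambda}$ and invoke \eqref{rca} for necessity, apply $_{a}D_t^{\alpha,\lambda}$ and use \eqref{rcb} together with the kernel property of the tempered powers for sufficiency, then recover the initial data by multiplying through by $e^{\lambda t}$, applying $_{a}D_t^{\alpha-j-1}$, and letting $t\to a^{+}$. Your opening remark about the substitution $v=e^{\lambda t}u$ and your closing comments on the regularity hypotheses needed for Proposition~\ref{lemma2a} are worthwhile additions not made explicit in the paper's version, but the argument itself is the same.
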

\begin{proof}
For the linear Cauchy problems \eqref{pRrla} and \eqref{pRrlb}, the conclusion is directly reached by
the Laplace transform given in Appendix B. Now we prove the more general case.

$Necessity.$
Performing the integral operator ${_{a}}I_t^{\alpha,\lambda}$ on both sides of the first equation of \eqref{pRrla}, we have
\begin{equation*}
u(t)=\sum_{k=0}^{n-1}g_k\frac{e^{-\lambda t}(t-a)^{\alpha-k-1}}{\Gamma(\alpha-k)}
+ \frac{1}{\Gamma(\alpha)}\int_{a}^t e^{-\lambda( t-s)} (t-s)^{\alpha-1} f(s,u(s))ds,
\end{equation*}
where we use the composite property (1) given in Proposition \ref{lemma2a}.
%Since
% $${_{a}D}_t^{\alpha-n}(e^{\lambda t}u(t))\big|_{t=a}={_{a}I}_t^{n-\alpha}(e^{\lambda t}u(t))\big|_{t=a}=\lim_{t\rightarrow a}\frac{1}{\Gamma(n-\alpha)}\int_{a}^t(t-s)^{n-\alpha-1}e^{\lambda s}u(s)ds=0,$$
% then $g_{n-1}$ in the above equation equals to zero.
 Then Eq. \eqref{integral} is obtained.

$Sufficiency.$
Applying the operator $_{a}D_t^{\alpha,\lambda}$ to both sides of Eq. \eqref{integral} results in
%\eqref{integral}
\begin{equation}\label{integrala}
_{a}D_t^{\alpha,\lambda}u(t)=\sum_{k=0}^{n-1}g_k\frac{_{a}D_t^{\alpha,\lambda}(e^{-\lambda t}(t-a)^{\alpha-k-1} ) }{\Gamma(\alpha-k)}
+ {_{a}D_t^{\alpha,\lambda}}{_{a}I_t^{\alpha,\lambda}}f(t,u(t))=f(t,u(t)),
\end{equation}
where we use the fact
\begin{equation*}\label{power}
\frac{_{a}D_t^{\alpha,\lambda}(e^{-\lambda t}(t-a)^{\alpha-k-1} ) }{\Gamma(\alpha-k)}
=\frac{e^{-\lambda t}(t-a)^{\alpha-k-1-\alpha}}{\Gamma(-k)}=\frac{e^{-\lambda t}(t-a)^{-k-1}}{\infty}=0,\,k=0,1,2,\cdots,n-1,
  \end{equation*}
and the composite property \eqref{rcb}.
%$${_{a}D_t^{\alpha,\lambda}}{_{a}I_t^{\alpha,\lambda}}u(t)=u(t).$$
Now we show that the solution of \eqref{integral} satisfies the initial condition given in  Eq. \eqref{pRrla}.
Multiplying $e^{\lambda t}$ and then performing the operator ${_{a}D}_t^{\alpha-j-1}$ on both sides of
 Eq. \eqref{integral},
for $~0\leq j<n-2<n-1<\alpha<n$, we have
\begin{equation}\label{integralp}
\begin{split}
{_{a}D}_t^{\alpha-j-1}(e^{\lambda t}u(t))=&\sum_{k=0}^{n-2}g_k \frac{(t-a)^{j-k}}{\Gamma(j-k+1)}+{_{a}D}_t^{\alpha-j-1}{_{a}I}_t^{\alpha}\big(e^{\lambda t}  f(t,u(t))\big),
\\
=&\sum_{k=0}^{n-2}g_k \frac{(t-a)^{j-k}}{\Gamma(j-k+1)}+{_{a}D}_t^{-j-1}\big(e^{\lambda t}  f(t,u(t))\big),
\end{split}
\end{equation}
where  the formula
$$~{_{a}D}_t^{\alpha-j-1}\big((t-a)^{\alpha-k-1}\big)=
~{_{a}D}_t^{\alpha-j-1}\big((t-a)^{\alpha-k-1}\big)=\frac{\Gamma(\alpha-k)}{\Gamma(j-k+1)}(t-a)^{j-k},~0\leq k<n-2,$$
is utilized.

Taking a limit $t\rightarrow a$ in the above equation, we obtain
\begin{equation}\label{integralpv}
\lim _{t\rightarrow a}{_{a}D}_t^{\alpha-j-1}(e^{\lambda t}u(t))=\lim _{t\rightarrow a}\sum_{k=0}^{n-2}g_k \frac{(t-a)^{j-k}}{\Gamma(j-k+1)}+\lim _{t\rightarrow a}{_{a}D}_t^{-j-1}\big(e^{\lambda t}  f(t,u(t))\big),
\end{equation}
with the second term in the right hand side being equal to zero; and for its first term, we have
\begin{equation}\label{integralpvq}
\begin{split}
\lim _{t\rightarrow a}\sum_{k=0}^{n-2}g_k \frac{(t-a)^{j-k}}{\Gamma(j-k+1)}=&\lim _{t\rightarrow 0}\sum_{k=0}^{j-1}g_k \frac{(t-a)^{j-k}}{\Gamma(j-k+1)}+g_{j}+\lim _{t\rightarrow a}\sum_{k=j+1}^{n-2}g_k \frac{(t-a)^{j-k}}{\Gamma(j-k+1)}
\\
=&\sum_{k=0}^{j-1} \frac{g_k}{\Gamma(j-k+1)}\cdot0+g_{j}+\lim _{t\rightarrow a}\sum_{k=j+1}^{n-2} \frac{g_k(t-a)^{j-k}}{\infty}
\\
=&g_{j}.
\end{split}
\end{equation}
\end{proof}
%%%%%%%%%%=======================================
%Lemma \ref{gldf} shows that
%if the function $f(t,u)$ is continuous, then the initial value problem \eqref{pRrla}
% is equivalent to the nonlinear Volterra integral equation of the second kind.
% In other words, every solution of the Volterra equation
% is also a solution of our original initial value problem, and vice
%versa.
By the similar technique in proving Lemma \ref{gldf}, we obtain the following conclusion
for the Cauchy problem \eqref{pRrlb}.
%%%%%%%%%%%%%%%%%%%%%%%%
\begin{lemma}\label{caputodf}
If the function $f(t,u)$ is continuous, then $u(t)$ is the solution of the initial value problem \eqref{pRrlb}
if and only if $u(t)$ is the solution of the Volterra integral equation of the second kind
 \begin{equation}\label{integralcapa}
u(t)=\sum_{k=0}^{n-1}c_k\frac{e^{-\lambda t}(t-a)^{k}  }{\Gamma(k+1)}
+ \frac{1}{\Gamma(\alpha)}\int_{a}^t e^{-\lambda( t-s)} (t-s)^{\alpha-1} f(s,u(s))ds.
\end{equation}
In particular, if~ $0<\alpha<1$, then $u(t)$ satisfies the Cauchy problem if and only if
$u(t)$ satisfies the following integral equation
\begin{equation}\label{integralbz}
u(t)=u(a)e^{-\lambda (t-a) }
+ \frac{1}{\Gamma(\alpha)}\int_{a}^t e^{-\lambda( t-s)} (t-s)^{\alpha-1} f(s,u(s))ds.
\end{equation}
\end{lemma}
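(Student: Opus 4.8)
The plan is to follow the proof of Lemma~\ref{gldf} almost verbatim, replacing the Riemann--Liouville composite identity \eqref{rca} by its Caputo analogue \eqref{rcaa}, and the singular power terms $e^{-\lambda t}(t-a)^{\alpha-k-1}/\Gamma(\alpha-k)$ by the smooth power terms $e^{-\lambda t}(t-a)^{k}/k!$. For the linear right-hand side the statement again follows directly from the Laplace-transform identities of Appendix B, so the real point is the general (possibly nonlinear) case.

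For necessity, I would apply the tempered integral ${_{a}I}_t^{\alpha,\lambda}$ to both sides of the first equation in \eqref{pRrlb}. By the composite property \eqref{rcaa}, the left-hand side equals $u(t)-\sum_{k=0}^{n-1}e^{-\lambda t}\frac{(t-a)^{k}}{k!}\big[\frac{d^{k}}{dt^{k}}(e^{\lambda t}u(t))\big|_{t=a}\big]$; substituting the initial data $\big[\frac{d^{k}}{dt^{k}}(e^{\lambda t}u(t))\big]\big|_{t=a}=c_k$ and expanding ${_{a}I}_t^{\alpha,\lambda}f$ via Definition~\ref{Def1} yields \eqref{integralcapa} after transposing the finite sum. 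For the special case $0<\alpha<1$ one has $n=1$, so $c_0=[e^{\lambda t}u(t)]|_{t=a}=e^{\lambda a}u(a)$ and the leading term becomes $u(a)e^{-\lambda(t-a)}$, which is \eqref{integralbz}.

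For sufficiency, I would apply ${_{a}^{C}D}_t^{\alpha,\lambda}$ to \eqref{integralcapa} and then verify the initial data. Two ingredients are needed. First, ${_{a}^{C}D}_t^{\alpha,\lambda}\big(e^{-\lambda t}(t-a)^{k}\big)=e^{-\lambda t}\,{_{a}^{C}D}_t^{\alpha}\big((t-a)^{k}\big)=0$ for $0\le k\le n-1$, since the Caputo derivative first applies $d^{n}/dt^{n}$, which annihilates polynomials of degree $\le n-1$; this removes every initial-data term. Second, ${_{a}^{C}D}_t^{\alpha,\lambda}{_{a}I}_t^{\alpha,\lambda}f=e^{-\lambda t}\,{_{a}^{C}D}_t^{\alpha}\big[{_{a}I}_t^{\alpha}(e^{\lambda t}f)\big]=f(t,u(t))$, which for $\alpha\in(0,1)$ is exactly \eqref{rcab}/\eqref{icr}. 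Thus the ODE in \eqref{pRrlb} is recovered. To see that \eqref{integralcapa} also forces the initial conditions, multiply it by $e^{\lambda t}$ to get $e^{\lambda t}u(t)=\sum_{k=0}^{n-1}c_k\frac{(t-a)^{k}}{k!}+{_{a}I}_t^{\alpha}(e^{\lambda t}f)$, apply $d^{j}/dt^{j}$ for $0\le j\le n-1$, and let $t\to a$: the polynomial contributes $c_j$ (lower terms differentiate away, higher terms vanish at $a$), while $\frac{d^{j}}{dt^{j}}{_{a}I}_t^{\alpha}(e^{\lambda t}f)\big|_{t=a}={_{a}I}_t^{\alpha-j}(e^{\lambda t}f)\big|_{t=a}=0$ because $\alpha-j>0$ and $e^{\lambda s}f(s,u(s))$ is bounded near $a$.

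The hard part will be the sufficiency step for $n\ge 2$, i.e. establishing ${_{a}^{C}D}_t^{\alpha,\lambda}{_{a}I}_t^{\alpha,\lambda}f=f$ when $n-1<\alpha<n$: the composite identity \eqref{rcab} as stated covers only $\alpha\in(0,1)$, so one must argue separately that, $e^{\lambda s}f(s,u(s))$ being continuous, the function ${_{a}I}_t^{\alpha}(e^{\lambda t}f)$ together with its first $n-1$ derivatives vanishes at $t=a$ (since $\alpha>n-1$), whence its degree-$(n-1)$ Taylor polynomial at $a$ is zero and its Caputo tempered derivative reduces to the Riemann--Liouville tempered derivative, which returns $f$ by \eqref{rcb}. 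This is precisely where the continuity hypothesis on $f$ (and the continuity of $u$) enters, and it is the only genuinely new point beyond what was already carried out in Lemma~\ref{gldf}.
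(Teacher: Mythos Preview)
Your proposal is correct and follows exactly the approach the paper intends: the paper's own proof of Lemma~\ref{caputodf} consists of the single remark that one uses ``the similar technique in proving Lemma~\ref{gldf},'' and your plan carries that out precisely, swapping \eqref{rca} for \eqref{rcaa} and the singular power terms for the polynomial ones. Your identification and treatment of the $n\ge 2$ subtlety with \eqref{rcab}---reducing to \eqref{rcb} via Proposition~\ref{lemma2as} once the first $n-1$ derivatives of ${_{a}I}_t^{\alpha}(e^{\lambda t}f)$ are shown to vanish at $t=a$---is in fact more careful than anything the paper makes explicit.
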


%%%%%%%%%%%%%%%%%%==========================
\subsection{Existence and  uniqueness}
Many authors have considered the existence and uniqueness of the solutions to the nonlinear ODEs with fractional derivatives; see, e.g., \cite{Pitcher:38,Abedeen:78,El-Sayed:88,Diethelm:02,Diethelm:04,Kilbas:06,Kilbas:08,Tomovski:12,Zhou:14}.
For the global existence and uniform asymptotic stability results
of fractional functional differential equations corresponding to \eqref{integralb}, one can see \cite{Li:13,Benchohra:08}.
%To the best of our knowledge, there is limited results are reported for the Cauchy type problems \eqref{pRrla} and \eqref{pRrlb}.
In the following, we discuss the existence and uniqueness of the solutions of the nonlinear tempered fractional differential equations based on the equivalent Volterra equations presented in Lemmas \ref{gldf} and \ref{caputodf}.
We shall employ the Banach fixed point theorem to prove it.
Let $f : [a, b]\times B \rightarrow \mathbb{R}$ be a continuous function such that $f(t, u) \in L([a, b])$ for any $u\in B$, being an open  set in $\mathbb{R}$.
In the following, we always suppose that
$f(t, u)$ satisfies the Lipschitz type condition with respect
to the second variable
\begin{equation}\label{lipcd}
|f(t,u)-f(t,v)|\leq C_{Lip} |u-v|,\textrm{for all}~ u,v\in B,~t\in[a,b],
\end{equation}
where $C_{Lip}$ is constant. We shall use the following space
$$L^{\alpha,\lambda}([a,b])=\bigg\{u\in L([a,b]),\,{_{a}D}_t^{\alpha,\lambda}u(t)\in L([a,b])\bigg\}.$$
%and
%$$L_{c}^{\alpha,\lambda}([a,b])=\bigg\{u\in L([a,b]),\,{_{a}^{C}D}_t^{\alpha,\lambda}u(t)\in L([a,b])\bigg\}.$$
%%%%%%%%%%%%%%%%%%%5
\begin{theorem}\label{thgldf}
If $n-1<\alpha<n, n \in \mathbb{N}^{+}, \lambda\geq 0$, %{_{a}D}_t^{\alpha-k-1}\big(e^{\lambda x}u(t)\big)\big|_{t=a}\in L[a, b], 0 \leq k \leq n-1$,
then there exists a unique solution $u(t)$ to the Cauchy problem \eqref{pRrla} in the space $L^{\alpha,\lambda}([a,b])$.
%In particular, if $0<\alpha<1$,
%then there exists a unique solution $u(t)$ to the Cauchy problem \eqref{integralb} in the space $L^{\alpha,\lambda}([a,b])$.
\end{theorem}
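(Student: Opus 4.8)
The plan is to invoke Lemma~\ref{gldf} to replace the Cauchy problem \eqref{pRrla} by the equivalent Volterra integral equation \eqref{integral}, and then to produce its unique solution in $L([a,b])$ by a fixed point argument. I would define the operator $T$ on $L([a,b])$ by
\[
(Tu)(t)=\sum_{k=0}^{n-1}g_k\frac{e^{-\lambda t}(t-a)^{\alpha-k-1}}{\Gamma(\alpha-k)}+\frac{1}{\Gamma(\alpha)}\int_{a}^{t}e^{-\lambda(t-s)}(t-s)^{\alpha-1}f(s,u(s))\,ds,
\]
and first check that $T$ maps $L([a,b])$ into itself: since $n-1<\alpha<n$ each exponent $\alpha-k-1>-1$, so the weakly singular terms $(t-a)^{\alpha-k-1}$ lie in $L([a,b])$, and, because $0<e^{-\lambda t}\le1$, so does the whole data term; the integral term is the convolution of the $L([0,b-a])$ kernel $\tau\mapsto e^{-\lambda\tau}\tau^{\alpha-1}/\Gamma(\alpha)$ with $s\mapsto f(s,u(s))$, which belongs to $L([a,b])$ by the standing hypotheses on $f$ together with the Lipschitz bound \eqref{lipcd}, so Young's inequality for convolutions places the integral term in $L([a,b])$ as well.

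Next I would obtain a contraction estimate, not for $T$ but for a high iterate $T^{m}$. For $\lambda\ge0$ and $w\ge0$ one has the elementary domination ${}_aI_t^{\sigma,\lambda}w(t)\le {}_aI_t^{\sigma}w(t)$ since $e^{-\lambda(t-s)}\le1$, so \eqref{lipcd} gives $|(Tu)(t)-(Tv)(t)|\le C_{Lip}\,{}_aI_t^{\alpha}\big(|u-v|\big)(t)$; iterating and using the monotonicity of ${}_aI_t^{\alpha}$ together with the semigroup property of the Riemann--Liouville integral yields
\[
|(T^{m}u)(t)-(T^{m}v)(t)|\le C_{Lip}^{m}\,{}_aI_t^{m\alpha}\big(|u-v|\big)(t),
\]
and integrating over $[a,b]$ with the bound $\|{}_aI_b^{m\alpha}w\|_{L([a,b])}\le(b-a)^{m\alpha}\|w\|_{L([a,b])}/\Gamma(m\alpha+1)$ gives
\[
\|T^{m}u-T^{m}v\|_{L([a,b])}\le\frac{\big(C_{Lip}(b-a)^{\alpha}\big)^{m}}{\Gamma(m\alpha+1)}\,\|u-v\|_{L([a,b])}.
\]
Because $\Gamma(m\alpha+1)$ outgrows any geometric sequence, the prefactor is $<1$ for $m$ large, so $T^{m}$ is a contraction on $L([a,b])$.

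By the generalized Banach fixed point theorem (Weissinger's theorem: a map one of whose iterates is a contraction has a unique fixed point), $T$ has a unique fixed point $u\in L([a,b])$. By Lemma~\ref{gldf} this $u$ is exactly the unique element of $L([a,b])$ solving \eqref{pRrla}; in particular ${}_aD_t^{\alpha,\lambda}u(t)=f(t,u(t))$, whose right-hand side is in $L([a,b])$ since $u\in L([a,b])$ and $f$ satisfies \eqref{lipcd}, so ${}_aD_t^{\alpha,\lambda}u\in L([a,b])$, i.e., $u\in L^{\alpha,\lambda}([a,b])$; uniqueness in $L^{\alpha,\lambda}([a,b])\subset L([a,b])$ follows a fortiori. (For $0<\alpha<1$ the argument is identical, with \eqref{integral} reducing to \eqref{integralb}.)

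The main difficulty is structural rather than computational. The data term carries the genuine singularities $(t-a)^{\alpha-k-1}$ with negative exponents whenever $k\ge\alpha$, so a solution need not be bounded near $t=a$; this rules out working in $C[a,b]$ with the supremum norm and makes $L([a,b])$ the natural space, which is what forces the convolution (Young) estimates and---since a single $T$ is a contraction only on short intervals---the detour through the iterate $T^{m}$. A secondary point needing care is the measurability and integrability of $s\mapsto f(s,u(s))$ for $u\in L([a,b])$ valued in the open set $B$; this follows from the continuity of $f$ and \eqref{lipcd}, and were $f$ only locally Lipschitz one would first solve on a short subinterval and then continue the solution.
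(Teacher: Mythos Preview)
Your argument is correct and follows a genuinely different route from the paper's. Both proofs begin identically, passing via Lemma~\ref{gldf} to the Volterra equation and setting up the integral operator on $L([a,b])$. The divergence comes at the contraction step: the paper restricts to a short subinterval $[a,t_1]$ on which $C_{Lip}(t_1-a)^{\alpha}/\Gamma(\alpha+1)<1$, applies the ordinary Banach fixed point theorem there, and then continues the solution across finitely many subintervals of equal length to cover $[a,b]$; you instead stay on the full interval, iterate to obtain
\[
\|T^{m}u-T^{m}v\|_{L([a,b])}\le \frac{\big(C_{Lip}(b-a)^{\alpha}\big)^{m}}{\Gamma(m\alpha+1)}\,\|u-v\|_{L([a,b])},
\]
and invoke the generalized (Weissinger) fixed point theorem. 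Your route is cleaner---no bookkeeping of subintervals, no reformulation \eqref{vuy}--\eqref{iuy} on each piece---at the price of citing a slightly stronger fixed point result and the semigroup property of the Riemann--Liouville integral. The paper's route is more elementary, needing only the standard contraction mapping theorem and the single estimate of Lemma~\ref{lemcnim}. For the final membership $u\in L^{\alpha,\lambda}([a,b])$ you argue directly that ${}_aD_t^{\alpha,\lambda}u=f(\cdot,u(\cdot))\in L([a,b])$, whereas the paper establishes this by passing to the limit in the Picard iterates; both are fine, yours being the quicker observation once the fixed point is in hand.
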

\begin{proof}
The proof of this theorem is similar to the references \cite{Pitcher:38,Diethelm:02,Kilbas:06,Tomovski:12}.
First we prove the existence of a unique solution $u(t)\in L([a, b])$.
In accordance with Lemma \ref{gldf}, it is sufficient to prove
the existence of a unique solution $u(t)\in L([a, b])$ to the nonlinear Volterra integral equation \eqref{integral}.
 We rewrite the
integral equation \eqref{integral} in the form of  operator $$u(t)= (Pu)(t),$$ where
 \begin{equation}\label{Ta}
(Pu)(t)=u_0(t)+\frac{1}{\Gamma(\alpha)}\int_{a}^t e^{-\lambda( t-s)} (t-s)^{\alpha-1} f(s,u(s))ds,
\end{equation}
and
\begin{equation}\label{u0}
u_0(t)=\sum_{k=0}^{n-1}g_k\frac{e^{-\lambda t}(t-a)^{\alpha-k-1}}{\Gamma(\alpha-k)}.
\end{equation}
We first prove that $P$ is a contraction operator in the subinterval  $[a, t_1] \subset [a, b]~(a < t_1 < b)$.
%, then we use the Bannch fixed point theorem.
To do this, we select $t_1\in ([a,b])$ such that the inequality
\begin{equation}\label{lipsa}
C_{Lip} ~\frac{(t_1-a)^{\alpha}}{\Gamma(\alpha+1)}<1
\end{equation}
holds.%, and then we prove the existence of a unique solution $u(t) \in L(a, t_1)$ to Eq. \eqref{integral} on the interval $[a, t_1]$.
 To apply the Banach fixed point theorem in the complete metric space $L([a, t_1])$, we have to prove the following facts:

(i) If $u(t) \in L([a, t_1])$, then $(Pu)(t) \in L([a, t_1])$;

(ii) For all $u_1,u_2 \in L([a, t_1])$ the following inequality holds
\begin{equation}\label{Clip}
\|Pu_1-Pu_2\|_{L(a,b)}\leq W_1 \|u_1-u_2\|_{L([a,b])}, ~W_1=C_{Lip} ~\frac{(t_1-a)^{\alpha}}{\Gamma(\alpha+1)}.
\end{equation}
In fact, since $f (t, u(t)) \in L([a, t_1])$ and Lemma \ref{lemcnim} in Appendix A, the integral in the right-hand side of \eqref{Ta} belongs to $L([a, t_1])$; obviously  $u_0(t) \in L([a, t_1])$, hence $(Pu)(t) \in L([a, t_1])$. Now, we prove the estimate \eqref{Clip}.
%Let $u(t)\in AC^{n}[a,b]$ then   by simple operations using the generalized Minkowski's
%inequality, we have
%\begin{equation}\label{TCDDb}
%      \big\|{_{a}I}_t^{\alpha,\lambda}u(t)\big\|\leq  M\|u\|,M=\frac{e^{-\lambda (a-b)}(b-a)^{\alpha}}{\Gamma(\alpha+1)}.
%    \end{equation}
%%Applying the fact
%%\begin{equation}\label{boungI}
%%\|_{a}I_t^{\sigma,\lambda}u(t)\|_{1}\leq Const \|u\|_1,Const=\frac{(b-a)^{\alpha}}{\Gamma(\alpha)},
%%\end{equation}
From Lemma \ref{lemcnim} in Appendix A,
we obtain
 \begin{equation*}
 \begin{split}
\|Pu_1-Pu_2\|_{L([a,t_1])}
=&\|{_{a}I_t^{\alpha,\lambda}}f(t,u_1(t))-{_{a}I_t^{\alpha,\lambda}}f(t,u_2(t))\|_{L([a,t_1])}\\
=&\|{_{a}I_t^{\alpha,\lambda}}\big(f(t,u_1(t))-f(t,u_2(t))\big)\|_{L([a,t_1])}\\
\leq& C_{Lip} \|{_{a}I_t^{\alpha,\lambda}}(u_1(t)-u_2(t))\|_{L([a,t_1])}\\
\leq& W_1 \|u_1(t)-u_2(t)\|_{L([a,t_1])}.
\end{split}
 \end{equation*}
In view of $0 < W_1 < 1$ there exists a unique solution $u^{*}(t)\in L([a, t_1])$ to Eq. \eqref{Ta} on the interval $[a, t_1]$. The solution $u(t)$ is obtained by taking the limit of convergent sequence $(P^mu^{*}_0)(t)$ as $m\rightarrow \infty$, i.e.,
\begin{equation}\label{Tmu}
\lim_{m\rightarrow \infty} \|P^mu^{*}_0-u^*\|_{L([a,t_1])}=0,
\end{equation}
where $u^{*}_0 (t) \in L([a, b])$.
%If at least one $g_k\neq0$ in the initial conditions \eqref{pRrla}, we can take $u^{*}_0(t) = u_0(t)$ with $u_0(t)$ defined by \eqref{u0}. By \eqref{Ta} we define a recursion formula
%\begin{equation}\label{Taite}
%(P^{m}u^{*}_0)(t)=u_0(t)+\frac{1}{\Gamma(\alpha)}\int_{a}^t e^{-\lambda( t-s)} (t-s)^{\alpha-1} f(s,(P^{m-1}u^{*}_0)(s))ds,m=1,2,3,...
%\end{equation}
%Rewrite the equation \eqref{Taite} in the form of the relationship
%\begin{equation}\label{Taitea}
%u_m(t)=u_0(t)+\frac{1}{\Gamma(\alpha)}\int_{a}^t e^{-\lambda( t-s)} (t-s)^{\alpha-1} f(s,u_{m-1}(s))ds,m=1,2,3,...
%\end{equation}
%where $u_m(t) = (P^{m}mu^{*}_0 )(t)$. Then Eq.\eqref{Tmu} can be rewritten as following
%\begin{equation}\label{Tmua}
%\lim_{m\rightarrow \infty} \|u_m-u^*\|_{L(a,t_1)}=0.
%\end{equation}
%This means that we actually applied the method of successive approximations to find a unique solution $u(t)$ to the integral
%equation \eqref{integral} on $[a, t_1]$.
Now let us consider the interval $[t_1, t_2]$ with $t_2 = t_1 + h$ and $h=t_1-a$. Rewrite
Eq. \eqref{integral} in the form
\begin{equation}\label{vuy}
  \begin{split}
u(t)=&u_0(t)+\frac{1}{\Gamma(\alpha)}\int_{a}^{t_1} e^{-\lambda( t-s)} (t-s)^{\alpha-1} f(s,u(s))ds\\
&+\frac{1}{\Gamma(\alpha)}\int_{t_1}^t e^{-\lambda( t-s)} (t-s)^{\alpha-1} f(s,u(s))ds.
 \end{split}
\end{equation}
Since the function $u(t)$ is uniquely defined on the interval $[a, t_1]$, the last integral can be considered as the known function. Then the above equation can be rewritten as
\begin{equation}\label{uy}
u(t)=u_{01}(t)
+\frac{1}{\Gamma(\alpha)}\int_{t_1}^t e^{-\lambda( t-s)} (t-s)^{\alpha-1} f(s,u(s))ds,
\end{equation}
where
\begin{equation}\label{iuy}
u_{01}(t)=u_0(t)+\frac{1}{\Gamma(\alpha)}\int_{0}^{t_1} e^{-\lambda( t-s)} (t-s)^{\alpha-1} f(s,u(s))ds,
\end{equation}
is the known function. With the same contraction factor $W_1$,  we can prove that there exists a unique solution $u^*(t) \in L(t_1, t_2)$ to Eq. \eqref{integral} on the interval $[t_1, t_2]$. By repeating this process finite times, e.g., $M$ times, we can cover the whole interval $[a,b]$.

%Taking the next interval $[t_2, t_3]$, where $t_3 = t_2 + h_2, h_2 > 0, t_3<\infty$, and repeating above process, we conclude that there exists a unique solution $u^*(t)\in L(a, b)$ for \eqref{integral}. Thus, there exists a unique solution
%$u(t)= u^*(t) \in L(a, b)$ to the Volterra integral equation \eqref{integral} and hence to the Cauchy type problem.

To complete the proof of the theorem we must show that such a unique solution $u(t)\in L([a, b])$ belongs to the space $L^{\alpha,\lambda}([a,b])$. It is sufficient to
prove that ${_{a}D}_t^{\alpha,\lambda}\big(u(t)\big)\in L([a, b])$. By the above proof, the solution $u(t)\in L([a, b])$ is a limit of the sequence $u_{m}(t) \in L([a, b])$, i.e.,
\begin{equation}\label{Tmuau}
\lim_{m\rightarrow \infty} \|u_m-u\|_{L([a,b])}=0,
\end{equation}
with the choice of certain $u_{m}$ on each $[a, t_1], [t_1, t_2], \cdots , [t_{M-1}, b]$.
In view of
\begin{equation}\label{caputox}
\|_{a}D_t^{\alpha,\lambda}u_m-{_{a}D_t^{\alpha,\lambda}}u\|_{L([a, b])}=\|f(t,u_{m-1})-f(t,u)\|_{L([a, b])}\leq C_{Lip} \|u_{m-1}-u\|_{L([a, b])},
\end{equation}
taking the limit of \eqref{caputox} as $m\rightarrow\infty$, gives
\begin{equation}%\label{caputox}
\lim_{m\rightarrow\infty}\|_{a}D_t^{\alpha,\lambda}u_m-{_{a}D}_t^{\alpha,\lambda}u\|_{L([a, b])}=0,
\end{equation}
and  hence $_{a}D_t^{\alpha,\lambda}u(t)\in L([a,b])$. This completes the proof of the theorem.
\end{proof}
%%%%%%%%%%%%%%%%%%%5

By almost the same idea, we can prove the following existence and uniqueness result for the Cauchy type problem \eqref{pRrlb}.
\begin{theorem}\label{thgldfcc}
If $n-1<\alpha<n, \,n \in \mathbb{N}^{+}, \lambda\geq 0$, then
there exists a unique solution $u(t)$ to the Cauchy type problem \eqref{pRrlb} in the space $AC^n[a, b]$.
%In particular, if $0<\alpha<1$,
%then there exists a unique solution $u(t)$ to the Cauchy type problem \eqref{integralbz} in the space $AC[a, b]$.
\end{theorem}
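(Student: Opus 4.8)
The plan is to follow the route already used for Theorem~\ref{thgldf}, simply replacing the space $L([a,b])$ by $C[a,b]$ (the natural choice since $f$ is assumed continuous) and the space $L^{\alpha,\lambda}([a,b])$ by $AC^n[a,b]$. First I would invoke Lemma~\ref{caputodf}: because $f$ is continuous, a function $u$ solves the Cauchy problem \eqref{pRrlb} if and only if it solves the Volterra integral equation \eqref{integralcapa}. Hence it suffices to produce a unique continuous solution of \eqref{integralcapa} on $[a,b]$ and then verify that this solution actually belongs to $AC^n[a,b]$.

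For the fixed-point step I would rewrite \eqref{integralcapa} as $u=Pu$ with $(Pu)(t)=u_0(t)+\frac{1}{\Gamma(\alpha)}\int_a^t e^{-\lambda(t-s)}(t-s)^{\alpha-1}f(s,u(s))\,ds$ and $u_0(t)=\sum_{k=0}^{n-1}c_k e^{-\lambda t}(t-a)^k/\Gamma(k+1)$. Since the kernel $e^{-\lambda(t-s)}(t-s)^{\alpha-1}$ is weakly singular and integrable ($\alpha-1>-1$) and $s\mapsto f(s,u(s))$ is continuous, $P$ maps $C[a,t_1]$ into itself for every $t_1\in(a,b]$; moreover, using $e^{-\lambda(t-s)}\le 1$ together with the Lipschitz condition \eqref{lipcd},
$$\|Pu_1-Pu_2\|_{C[a,t_1]}\le C_{Lip}\,\frac{(t_1-a)^{\alpha}}{\Gamma(\alpha+1)}\,\|u_1-u_2\|_{C[a,t_1]}.$$
Choosing $t_1$ so that $W_1:=C_{Lip}(t_1-a)^{\alpha}/\Gamma(\alpha+1)<1$ — exactly condition \eqref{lipsa} — the Banach fixed point theorem in the complete metric space $C[a,t_1]$ yields a unique solution on $[a,t_1]$. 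Then, precisely as in Theorem~\ref{thgldf}, I would split $[a,b]$ into finitely many subintervals $[a,t_1],[t_1,t_2],\dots$ of length $h=t_1-a$: on $[t_1,t_2]$ equation \eqref{integralcapa} takes the form $u(t)=u_{01}(t)+\frac{1}{\Gamma(\alpha)}\int_{t_1}^t e^{-\lambda(t-s)}(t-s)^{\alpha-1}f(s,u(s))\,ds$ with $u_{01}$ already known, the contraction constant $W_1$ is unchanged, and after $M$ steps one covers $[a,b]$ and obtains a unique $u\in C[a,b]$.

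It remains to upgrade the regularity to $u\in AC^n[a,b]$, which is the analogue here of the membership $u\in L^{\alpha,\lambda}([a,b])$ in Theorem~\ref{thgldf} and is the step I expect to need the most care. The clean way is to set $v(t)=e^{\lambda t}u(t)$; from \eqref{integralcapa} one gets $v(t)=\sum_{k=0}^{n-1}c_k(t-a)^k/k!+{_{a}I_t^{\alpha}}\big(e^{\lambda s}f(s,u(s))\big)$, i.e.\ a polynomial of degree $n-1$ plus the Riemann--Liouville integral of order $\alpha$ of a function continuous on $[a,b]$. Writing ${_{a}I_t^{\alpha}}={_{a}I_t^{\,n-1}}\,{_{a}I_t^{\,\alpha-n+1}}$ and using that ${_{a}I_t^{\,\alpha-n+1}}$ of a continuous function is again continuous (indeed H\"older of exponent $\alpha-n+1\in(0,1)$) while ${_{a}I_t^{\,n-1}}$ of a continuous function lies in $C^{n-1}$ with absolutely continuous $(n-1)$-st derivative — these are the basic mapping properties of the fractional integral collected in Appendix~A and used via Lemma~\ref{lemcnim} — one concludes ${_{a}I_t^{\alpha}}\big(e^{\lambda s}f(s,u(s))\big)\in AC^n[a,b]$; since multiplication by $e^{-\lambda t}\in C^{\infty}$ preserves $AC^n[a,b]$, we obtain $u\in AC^n[a,b]$. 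Uniqueness in $AC^n[a,b]$ is then immediate: any such solution solves \eqref{integralcapa} by Lemma~\ref{caputodf}, and \eqref{integralcapa} has a unique continuous solution.

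An even shorter alternative, worth noting, is to perform the substitution $v=e^{\lambda t}u$ at the outset: problem \eqref{pRrlb} then becomes the classical (untempered) Caputo Cauchy problem ${_{a}^{C}D_t^{\alpha}}v=\widetilde f(t,v)$ with $\widetilde f(t,v)=e^{\lambda t}f(t,e^{-\lambda t}v)$ and standard initial data $v^{(k)}(a)=c_k$, $k=0,\dots,n-1$. On $[a,b]$ the factor $e^{\lambda t}$ is bounded, so $\widetilde f$ is continuous and Lipschitz in $v$, and the classical existence--uniqueness theory for Caputo fractional ODEs applies directly, giving a unique $v\in AC^n[a,b]$ and hence a unique $u=e^{-\lambda t}v\in AC^n[a,b]$. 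In either route the genuinely routine parts are the contraction estimate and the stepwise continuation (verbatim from Theorem~\ref{thgldf}), and the only delicate bookkeeping is the $AC^n$-regularity of the fixed point.
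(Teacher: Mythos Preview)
Your proposal is correct and follows exactly the route the paper intends: the paper's own proof of Theorem~\ref{thgldfcc} is literally the single sentence ``By almost the same idea, we can prove the following existence and uniqueness result for the Cauchy type problem \eqref{pRrlb},'' deferring entirely to the argument of Theorem~\ref{thgldf}. Your contraction/stepwise-continuation argument reproduces that scheme, and your discussion of the $AC^n$-regularity (via the factorization ${_aI_t^{\alpha}}={_aI_t^{\,n-1}}\,{_aI_t^{\,\alpha-n+1}}$) together with the alternative substitution $v=e^{\lambda t}u$ actually supplies more detail than the paper itself provides.
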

%\begin{remark}\label{sub}
%From remark \ref{sgltem}, we can consider the the Cauchy type problem
%\begin{equation}\label{subpRrla}
%\begin{cases}
%\displaystyle
%D_s^{\alpha,\lambda}u(t)=f(t,u(t)),~ \text{$n-1<\alpha<n$},\\
%\displaystyle
%\big[{_{a}D}_t^{\alpha-k-1}\big(e^{\lambda x}u(t)\big)\big]\big|_{t=a}=g_k,~\text{$k=0,1,2,...,n-1$}.
%\end{cases}
%\end{equation}
%With the similar method, we can prove that Eq.\eqref{subpRrla} is equivalent to the nonlinear Volterra integral equation \eqref{integral}. And we can prove that it has a unique solution with the help of the Banach fixed point theorem.
%\end{remark}
%%%%%%%%%%%%%%%%%%%%%%%%%
\subsection{Stability}\label{gcpc}
To prove the stability of the solutions of the Cauchy type problems \eqref{pRrla} and \eqref{pRrlb}, we need the following generalized Gronwall's Lemmas.
\begin{lemma}[\cite{Coddington:55}]\label{gronwall}
Let $x,y,\phi$ be real continuous functions  on interval $[a,b]$,  $\phi(t)\geq 0, t\in[a,b]$, and
\begin{equation}\label{condition}
x(t)\leq y(t)+\int_{a}^{t}x(s)\phi(s)ds.
\end{equation}
Then holds
\begin{equation}\label{conloasa}
x(t)\leq y(t)+\int_{a}^{t}y(s)\phi(s)e^{\int_{s}^{t}\phi(\tau)d\tau}ds.
\end{equation}
If, in addition, $y(\cdot)$ is a nondecreasing function defined on $[a,b]$, we have
\begin{equation}\label{conlo}
x(t)\leq y(t)e^{\int_{a}^{t}\phi(s)ds}.
\end{equation}
\end{lemma}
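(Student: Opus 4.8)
The statement is the classical Gronwall inequality, so the plan is the standard integrating-factor argument. First I would set $v(t)=\int_a^t x(s)\phi(s)\,ds$; since $x$ and $\phi$ are continuous, $v\in C^1[a,b]$ with $v(a)=0$ and $v'(t)=x(t)\phi(t)$. Feeding the hypothesis \eqref{condition} into this and using $\phi\ge 0$ (so that multiplying by $\phi$ preserves the direction of the inequality) gives the linear differential inequality $v'(t)\le \phi(t)y(t)+\phi(t)v(t)$, i.e. $v'(t)-\phi(t)v(t)\le \phi(t)y(t)$.

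Next I would multiply by the positive integrating factor $\mu(t)=\exp\big(-\int_a^t\phi(\tau)\,d\tau\big)$, which makes the left-hand side exact: $\frac{d}{dt}\big(\mu(t)v(t)\big)\le \mu(t)\phi(t)y(t)$. Integrating from $a$ to $t$ and using $v(a)=0$ yields $\mu(t)v(t)\le\int_a^t\mu(s)\phi(s)y(s)\,ds$, hence $v(t)\le\int_a^t y(s)\phi(s)\exp\big(\int_s^t\phi(\tau)\,d\tau\big)\,ds$. Since $x(t)\le y(t)+v(t)$ by hypothesis, this is exactly \eqref{conloasa}.

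For the last claim, assuming $y$ nondecreasing gives $y(s)\le y(t)$ for $a\le s\le t$, so \eqref{conloasa} becomes $x(t)\le y(t)\big(1+\int_a^t\phi(s)\exp(\int_s^t\phi(\tau)\,d\tau)\,ds\big)$. Writing $F(s)=\int_s^t\phi(\tau)\,d\tau$, so that $F'(s)=-\phi(s)$, the remaining integral is $-\int_a^t F'(s)e^{F(s)}\,ds=e^{F(a)}-e^{F(t)}=\exp(\int_a^t\phi(\tau)\,d\tau)-1$; the bracket then collapses to $\exp(\int_a^t\phi(\tau)\,d\tau)$ and \eqref{conlo} follows. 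I do not expect a genuine obstacle here: the only points needing care are the sign inside $\mu$, the observation that multiplying the differential inequality by the positive functions $\mu$ and $\phi$ keeps its direction, and the trivial check that $v$ is differentiable. An alternative worth noting is to iterate \eqref{condition} into itself and sum the resulting Neumann-type series $\sum_k\frac{1}{k!}\big(\int_a^t\phi\big)^k$, but the integrating-factor route is shorter and delivers the refined form \eqref{conloasa} directly.
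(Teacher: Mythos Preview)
Your argument is correct and is precisely the classical integrating-factor proof of Gronwall's inequality; every step checks out, including the sign bookkeeping in the integrating factor and the evaluation of $\int_a^t\phi(s)e^{\int_s^t\phi}\,ds$ via the antiderivative $-e^{F(s)}$.

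There is nothing to compare against: the paper does not prove this lemma but simply quotes it from Coddington--Levinson \cite{Coddington:55}. Your proof is the standard one found there (and in essentially every ODE text), so you have effectively reproduced what the cited reference contains.
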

\begin{lemma}[\cite{Brunner:86}]\label{gronwalla}
Let $x:[a,b]\rightarrow [0,+\infty)$ be a real function and $y(\cdot)$ is a nonnegative, locally integrable function defined on $[a,b]$, $\alpha\in(0,1)$, and there exists a constant $L>0$ such that
\begin{equation}\label{conditiona}
x(t)\leq y(t)+L\int_{a}^{t}x(s)(t-s)^{\alpha-1}ds.
\end{equation}
Then there exists a constant $C=C(\alpha)$ such that
\begin{equation}\label{conlo}
x(t)\leq y(t)+C\int_{a}^{t}y(s)(t-s)^{\alpha-1}ds,
\end{equation}
for all $t\in [a,b]$.
\end{lemma}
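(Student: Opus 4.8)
The statement is the classical weakly--singular Gronwall inequality (\cite{Brunner:86}); the plan is to iterate the integral inequality and sum the resulting series. Take $x$ to be locally integrable on $[a,b]$ — this is needed for the convolution to make sense and is the setting in which the lemma is applied — and introduce the order-preserving linear integral operator
\[
(B\phi)(t)=L\int_{a}^{t}(t-s)^{\alpha-1}\phi(s)\,ds=L\,\Gamma(\alpha)\,{}_{a}I_{t}^{\alpha}\phi(t)
\]
on nonnegative locally integrable functions; since $(t-s)^{\alpha-1}$ is integrable near $s=t$ for $\alpha\in(0,1)$, Tonelli's theorem shows $B$ maps this cone into itself, and $\phi_1\le\phi_2$ implies $B\phi_1\le B\phi_2$. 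The hypothesis \eqref{conditiona} reads $x\le y+Bx$, so by monotonicity $x\le y+B(y+Bx)=y+By+B^{2}x$, and after $n$ iterations
\[
x(t)\le\sum_{k=0}^{n-1}(B^{k}y)(t)+(B^{n}x)(t),\qquad t\in[a,b].
\]

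Next I would compute the iterates of $B$ explicitly. Using the Beta integral $\int_{s}^{t}(t-\tau)^{k\alpha-1}(\tau-s)^{\alpha-1}\,d\tau=\frac{\Gamma(k\alpha)\Gamma(\alpha)}{\Gamma((k+1)\alpha)}(t-s)^{(k+1)\alpha-1}$ (equivalently, the semigroup property ${}_{a}I_{t}^{\alpha}{}_{a}I_{t}^{\beta}={}_{a}I_{t}^{\alpha+\beta}$), an induction gives
\[
(B^{n}\phi)(t)=(L\Gamma(\alpha))^{n}\,{}_{a}I_{t}^{n\alpha}\phi(t)=\frac{(L\Gamma(\alpha))^{n}}{\Gamma(n\alpha)}\int_{a}^{t}(t-s)^{n\alpha-1}\phi(s)\,ds.
\]
In particular $\|B^{n}x\|_{L([a,b])}\le\frac{(L\Gamma(\alpha))^{n}(b-a)^{n\alpha}}{\Gamma(n\alpha+1)}\|x\|_{L([a,b])}$, which tends to $0$ as $n\to\infty$ because $\sum_{n}z^{n}/\Gamma(n\alpha+1)$ converges for every $z$; hence $B^{n}x\to0$ in $L([a,b])$ (a.e. along a subsequence, which is enough). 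Letting $n\to\infty$ in the iterated inequality and summing the nonnegative series term by term (monotone convergence) yields
\[
x(t)\le y(t)+\int_{a}^{t}\Bigg(\sum_{k=1}^{\infty}\frac{(L\Gamma(\alpha))^{k}}{\Gamma(k\alpha)}(t-s)^{k\alpha-1}\Bigg)y(s)\,ds.
\]

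Finally I would collapse the kernel series onto a single weakly singular power. For $a\le s\le t\le b$ and $\alpha\in(0,1)$, write $(t-s)^{k\alpha-1}=(t-s)^{\alpha-1}(t-s)^{(k-1)\alpha}$ and bound $(t-s)^{(k-1)\alpha}\le(b-a)^{(k-1)\alpha}$; then the bracketed sum is $\le C(\alpha)(t-s)^{\alpha-1}$ with
\[
C(\alpha)=\sum_{k=1}^{\infty}\frac{(L\Gamma(\alpha))^{k}(b-a)^{(k-1)\alpha}}{\Gamma(k\alpha)}<\infty,
\]
the convergence following from $\Gamma((k+1)\alpha)/\Gamma(k\alpha)\to\infty$ (ratio test). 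Substituting this bound gives $x(t)\le y(t)+C(\alpha)\int_{a}^{t}(t-s)^{\alpha-1}y(s)\,ds$ for all $t\in[a,b]$, which is the assertion (and makes explicit that the constant in fact depends on $L$ and $b-a$ as well). The only steps requiring care are the explicit evaluation of $B^{n}$, which rests on the Beta-function identity, and the vanishing of $B^{n}x$ together with the term-by-term summation; I expect no genuine obstacle beyond this bookkeeping, since all data are nonnegative and the interval is bounded.
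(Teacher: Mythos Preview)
The paper does not prove this lemma; it is quoted from \cite{Brunner:86} as a known result and used as a black box in the proof of Theorem~\ref{thgldfsta}. So there is no ``paper's own proof'' to compare against.

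Your argument is the standard iteration proof of the weakly singular Gronwall inequality and is essentially correct. Two small points worth tightening: (i) the passage ``$B^{n}x\to0$ in $L([a,b])$, a.e.\ along a subsequence'' is a detour --- once $n\alpha\ge1$ the kernel $(t-s)^{n\alpha-1}$ is bounded by $(b-a)^{n\alpha-1}$, so $(B^{n}x)(t)\le\dfrac{(L\Gamma(\alpha))^{n}(b-a)^{n\alpha-1}}{\Gamma(n\alpha)}\|x\|_{L([a,b])}$ gives \emph{pointwise} decay for every $t$, which feeds directly into the iterated inequality; (ii) you rightly observe that the constant you obtain depends on $L$ and $b-a$ as well as $\alpha$, which is consistent with the usual formulations of this lemma even though the paper's notation $C=C(\alpha)$ suppresses this. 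Your added hypothesis that $x$ be locally integrable is the natural standing assumption under which the convolution is defined and matches how the lemma is invoked later in the paper.
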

%%%%%%%%%%%%%%%%%%%%%%%%%%%%
\begin{theorem}\label{thgldfsta}
Under the  assumptions given in Theorem \ref{thgldf}, let $u(t)$ and $v(t)$ be the solutions of the Cauchy type problem \eqref{pRrla} with different initial conditions. Then
\begin{equation}\label{conloa}
|u(t)-v(t)|\leq
\begin{cases}
\displaystyle
\frac{\big|g_0-\widetilde{g}_0\big|e^{-\lambda t}}{\Gamma(\alpha)}\big((t-a)^{\alpha-1}
+C(\alpha)
(t-a)^{2\alpha-1}\big),~ \text{$0<\alpha<1$},\\
\displaystyle
\varphi(t)
\left(e^{\frac{(t-a)^{\alpha}}{\alpha}}(t-a)^\alpha+1 \right)e^{-\lambda t},~\text{$n-1<\alpha<n,n\geq2$},
\end{cases}
\end{equation}
where  $\displaystyle\varphi(t)=C(\alpha)\sum_{k=0}^{n-1}\big|g_k-\widetilde{g}_k\big|(t-a)^{\alpha-k-1}$,
$g_k=\big[{_{a}D}_t^{\alpha-k-1}\big(e^{\lambda x}u(t)\big)\big]\big|_{t=a}$, and

 $
\widetilde{g}_k=\big[{_{a}D}_t^{\alpha-k-1}\big(e^{\lambda x}v(t)\big)\big]\big|_{t=a}$.

Similarly, under the  assumptions given in Theorem \ref{thgldfcc}, for the problem  \eqref{pRrlb}, there exists
\begin{equation}\label{conlob}
|u(t)-v(t)|\leq
\begin{cases}
\displaystyle
\big|c_0-\widetilde{c}_0\big|e^{-\lambda t}\bigg(1+C\frac{(t-a)^{\alpha}}{\alpha}\bigg),~ \text{$0<\alpha<1$},\\
\displaystyle
\psi(t)
\left(e^{\frac{(t-a)^{\alpha}}{\alpha}}(t-a)^\alpha+1 \right)e^{-\lambda t},~\text{$n-1<\alpha<n,n\geq2$},
\end{cases}
\end{equation}
where $\displaystyle\psi(t)=C(\alpha)\sum_{k=0}^{n-1}\big|c_k-\widetilde{c}_k\big|(t-a)^{k},
c_k=\bigg[\frac{d^{k}}{dt^{k}}(e^{\lambda t}u(t))\bigg]\bigg|_{t=a}$, and $\widetilde{c}_k=\bigg[\frac{d^{k}}{dt^{k}}(e^{\lambda t}v(t))\bigg]\bigg|_{t=a}$.
\end{theorem}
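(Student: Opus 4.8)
The plan is to turn the stability question into a scalar Gronwall-type estimate by means of the equivalent Volterra formulations of Lemmas~\ref{gldf} and \ref{caputodf}, and then invoke Lemma~\ref{gronwall} or Lemma~\ref{gronwalla}. I describe the argument for the Riemann--Liouville problem \eqref{pRrla}; the Caputo problem \eqref{pRrlb} is entirely analogous, using \eqref{integralcapa} (resp. \eqref{integralbz} when $0<\alpha<1$) in place of \eqref{integral} (resp. \eqref{integralb}). \textbf{Step 1 (reduction).} Let $u,v$ be the solutions of \eqref{pRrla} with data $\{g_k\}$ and $\{\widetilde g_k\}$; by Theorem~\ref{thgldf} together with Lemma~\ref{gldf}, both satisfy \eqref{integral}. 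Subtracting, taking absolute values, and using \eqref{lipcd} gives, for $t>a$,
\[
|u(t)-v(t)|\le e^{-\lambda t}\widetilde y_0(t)+\frac{C_{Lip}}{\Gamma(\alpha)}\int_a^t e^{-\lambda(t-s)}(t-s)^{\alpha-1}|u(s)-v(s)|\,ds,\qquad \widetilde y_0(t)=\sum_{k=0}^{n-1}\frac{|g_k-\widetilde g_k|}{\Gamma(\alpha-k)}(t-a)^{\alpha-k-1}.
\]
Setting $w(t)=e^{\lambda t}|u(t)-v(t)|$ clears the exponential weight and leaves $w(t)\le \widetilde y_0(t)+\frac{C_{Lip}}{\Gamma(\alpha)}\int_a^t (t-s)^{\alpha-1}w(s)\,ds$, with $\widetilde y_0$ nonnegative and locally integrable on $[a,b]$ since every exponent $\alpha-k-1$ exceeds $-1$; one recovers the bound on $|u-v|$ at the end just by multiplying back by $e^{-\lambda t}$.

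\textbf{Step 2 (case $0<\alpha<1$).} Then $n=1$ and $\widetilde y_0(t)=|g_0-\widetilde g_0|(t-a)^{\alpha-1}/\Gamma(\alpha)$ is a single weakly singular power, so Lemma~\ref{gronwalla} (with $L=C_{Lip}/\Gamma(\alpha)$) applies directly and yields $w(t)\le \widetilde y_0(t)+C(\alpha)\int_a^t (t-s)^{\alpha-1}\widetilde y_0(s)\,ds$. The remaining integral is the Beta integral $\int_a^t (s-a)^{\alpha-1}(t-s)^{\alpha-1}\,ds=B(\alpha,\alpha)(t-a)^{2\alpha-1}$; folding $B(\alpha,\alpha)$ into $C(\alpha)$ and multiplying by $e^{-\lambda t}$ produces exactly the first line of \eqref{conloa}. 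For the Caputo equation \eqref{integralbz} the forcing term is the constant $|c_0-\widetilde c_0|$ (using $u(a)e^{-\lambda(t-a)}=c_0e^{-\lambda t}$), and the same step with $\int_a^t (t-s)^{\alpha-1}\,ds=(t-a)^\alpha/\alpha$ gives the first line of \eqref{conlob}.

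\textbf{Step 3 (case $n\ge 2$).} Now $\alpha>1$, the kernel $(t-s)^{\alpha-1}$ is bounded, and the inequality for $w$ is of the classical Gronwall type of Lemma~\ref{gronwall}. Because $\widetilde y_0$ is no longer monotone — its $k=n-1$ term even blows up at $t=a$ — one must use the general conclusion \eqref{conloasa} (for the Caputo analogue $\widetilde y_0(t)=\sum_k|c_k-\widetilde c_k|(t-a)^k/k!$ \emph{is} nondecreasing, so the sharper, monotone form of Lemma~\ref{gronwall} may be used). Estimating $\int_s^t\phi\le\int_a^t\phi$ to extract the factor $e^{(t-a)^\alpha/\alpha}$, bounding the finitely many convolution integrals $\int_a^t (s-a)^{\alpha-k-1}(t-s)^{\alpha-1}\,ds=B(\alpha-k,\alpha)(t-a)^{2\alpha-k-1}$ (finite since $\alpha-k>0$ for $0\le k\le n-1$), and absorbing all Beta/Mittag--Leffler constants into $C(\alpha)$, one collects the terms into $\varphi(t)\bigl(e^{(t-a)^\alpha/\alpha}(t-a)^\alpha+1\bigr)$ with $\varphi(t)=C(\alpha)\sum_{k=0}^{n-1}|g_k-\widetilde g_k|(t-a)^{\alpha-k-1}$; multiplying by $e^{-\lambda t}$ gives the second line of \eqref{conloa}, and the identical computation with $(t-a)^k$ powers and $\psi$ gives the second line of \eqref{conlob}.

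\textbf{Where the work is.} Steps 1 and 2 are routine once the Volterra formulation is available. The technical core is Step 3: the forcing term $\widetilde y_0$ is singular and non-monotone at $t=a$, so the elementary monotone Gronwall estimate does not apply verbatim, and one must propagate this forcing through \eqref{conloasa} while keeping all the resulting convolution integrals finite and uniformly controlled on $[a,b]$, and while choosing $C(\alpha)$ large enough that the crude exponential-times-polynomial majorant $e^{(t-a)^\alpha/\alpha}(t-a)^\alpha+1$ dominates the (in fact sharper, Mittag--Leffler-type) bound the kernel iteration really produces. A little extra bookkeeping is needed to confirm the regularity hypotheses of Lemmas~\ref{gldf}--\ref{caputodf} and Theorems~\ref{thgldf}--\ref{thgldfcc}, so that $u$ and $v$ genuinely satisfy their Volterra equations and the pointwise manipulations above are legitimate for $t>a$.
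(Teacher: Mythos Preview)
Your proposal is correct and follows essentially the same route as the paper: reduce to the Volterra formulation, multiply by $e^{\lambda t}$ to strip the exponential weight, then invoke Lemma~\ref{gronwalla} for $0<\alpha<1$ and Lemma~\ref{gronwall} for $n\ge 2$, evaluating the resulting Beta integrals. In fact you are more explicit than the paper about the subtlety in Step~3 (that $\widetilde y_0$ is singular and non-monotone for the Riemann--Liouville data, forcing the general conclusion \eqref{conloasa} rather than the monotone one); the paper glosses over this and simply writes ``$\phi(t)=(t-s)^{\alpha-1}$'' in Lemma~\ref{gronwall}, which strictly speaking does not match the hypothesis there, so your caveat about the convolution kernel is well placed.
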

\begin{proof}
Suppose that $u(t)$ and $v(t)$ are any two solutions of the Cauchy type problem \eqref{pRrla} with different initial conditions.
From the equivalent integral formulation (\ref{integral}), we have
\begin{equation}\label{pvuys}
  \begin{split}
e^{\lambda t}|u(t)-v(t)|\leq&\left(\sum_{k=0}^{n-1}\big|g_k-\widetilde{g}_k\big|\frac{(t-a)^{\alpha-k-1}}{\Gamma(\alpha-k)}\right)\\
&+\frac{1}{\Gamma(\alpha)}\int_{a}^t  (t-s)^{\alpha-1} e^{\lambda s}|f(s,u(s))-f(s,v(s))|ds\\
&\leq\left(\sum_{k=0}^{n-1}\big|g_k-\widetilde{g}_k\big|\frac{(t-a)^{\alpha-k-1}}{\Gamma(\alpha-k)}\right)\\
&+\frac{C_{lip}}{\Gamma(\alpha)}\int_{a}^t  (t-s)^{\alpha-1} e^{\lambda s}|u(s)-v(s)|ds.
 \end{split}
\end{equation}
For $0<\alpha<1$, using the generalized Gronwall's inequality with weak singular kernel given in Lemma \ref{gronwalla}, we get
\begin{equation}\label{conlovcc}
\begin{split}
e^{\lambda t}|u(t)-v(t)|\leq& \bigg(\big|g_0-\widetilde{g}_0\big|\frac{(t-a)^{\alpha-1}}{\Gamma(\alpha)}\bigg)\\
&+C\int_{a}^{t}\bigg(\big|g_0-\widetilde{g}_0\big|\frac{(s-a)^{\alpha-1}}{\Gamma(\alpha)}\bigg)(t-s)^{\alpha-1}ds,
\end{split}
\end{equation}
which implies
\begin{eqnarray}\label{conlovcsc}
|u(t)-v(t)| & \leq & \big|g_0-\widetilde{g}_0\big|\frac{e^{-\lambda t}}{\Gamma(\alpha)}\bigg((t-a)^{\alpha-1}+C\int_{a}^{t}(s-a)^{\alpha-1}(t-s)^{\alpha-1}ds\bigg)\nonumber\\
& = & \big|g_0-\widetilde{g}_0\big|\frac{e^{-\lambda t}}{\Gamma(\alpha)}\bigg((t-a)^{\alpha-1}+C\frac{\Gamma(\alpha)^2}{\Gamma(2\alpha)}(t-a)^{2\alpha-1}\bigg).
\end{eqnarray}
For $n-1<\alpha< n,n\geq2$, taking $$x(t)=e^{\lambda t}|u(t)-v(t)|, ~ y(t)=\left(\sum_{k=0}^{n-1}\big|g_k-\widetilde{g}_k\big|\frac{(t-a)^{\alpha-k-1}}{\Gamma(\alpha-k)}\right),~ \phi(t)= (t-s)^{\alpha-1} $$ in inequality \eqref{condition}, we have
\begin{eqnarray}\label{conlovccn}
&& e^{\lambda t}|u(t)-v(t)| \nonumber\\
&\leq &  \sum_{k=0}^{n-1}\big|g_k-\widetilde{g}_k\big|\frac{(t-a)^{\alpha-k-1}}{\Gamma(\alpha-k)}
+\sum_{k=0}^n |g_k-\tilde{g}_k|(t-a)^{2\alpha-k-1}\frac{\Gamma(\alpha)}{\Gamma(2\alpha-k)}e^{\frac{(t-a)^{\alpha}}{\alpha}} \nonumber\\
&\leq & C(\alpha) \sum_{k=0}^{n-1}\big|g_k-\widetilde{g}_k\big|(t-a)^{\alpha-k-1}\left(e^{\frac{(t-a)^{\alpha}}{\alpha}}(t-a)^\alpha+1 \right)e^{-\lambda t},
\end{eqnarray}
where $C(\alpha)=\max\limits_{k=0,1,\cdots,n-1}\left\{\frac{1}{\Gamma(\alpha-k)}+\frac{\Gamma(\alpha)}{\Gamma(2\alpha-k)}\right\}$.
%i.e.,
%\begin{equation}\label{conlovccn}
%|u(t)-v(t)|\leq \bigg(\sum_{k=0}^{n-1}\big|g_k-\widetilde{g}_k\big|\frac{(t-a)^{\alpha-k-1}}{\Gamma(\alpha-k)}\bigg)
%e^{\frac{(t-a)^{\alpha}}{\alpha}-\lambda t}.
%\end{equation}

%\left(e^{\frac{(t-a)^{\alpha}}{\alpha}}(t-a)^\alpha+1 \right)e^{-\lambda t}

With the similar method, we can prove the stability results for the problem \eqref{pRrlb}.
\end{proof}
%%%%%%%%%%%%%%%%%%%%%%%%%
%%%%%%%%%%%%%%%%%%%%%%%%%
\subsection{generalized cauchy problems}\label{gcp}
In this subsection, we consider the ODE with $m$-term Riemann-Liouville tempered fractional derivatives
\begin{equation}\label{pRrlaag}
\begin{cases}
\displaystyle
_{a}D_t^{\alpha,\lambda}u(t)=
f(t,u(t),{_{a}D}_t^{\alpha_{1},\lambda_{1}}u(t),{_{a}D}_t^{\alpha_{2},\lambda_{2}}u(t),\cdots,{_{a}D}_t^{\alpha_{m-1},\lambda_{m-1}}u(t)),~ \text{$n-1<\alpha<n$},\\
\displaystyle
\big[{_{a}D}_t^{\alpha-k-1}\big(e^{\lambda x}u(t)\big)\big]\big|_{t=a}=g_k,~\text{$k=0,1,2,\cdots,n-1$};
\end{cases}
\end{equation}
and the ODE with $m$-term  Caputo tempered fractional derivatives
\begin{equation}\label{pRrlbbg}
\begin{cases}
\displaystyle
 {_{a}^{C}D}_t^{\alpha,\lambda}u(t)=
 f(t,u(t),{_{a}^{C}D}_t^{\alpha_1,\lambda_1}u(t),{_{a}^{C}D}_t^{\alpha_2,\lambda_2}u(t),\cdots,{_{a}^{C}D}_t^{\alpha_{m-1},\lambda_{m-1}}u(t)),~ \text{$n-1<\alpha<n$},\\
\displaystyle
\bigg[\frac{d^{k}}{dt^{k}}(e^{\lambda t}u(t))\bigg]\bigg|_{t=a}=c_k,~\text{$k=0,1,2,\cdots,n-1$},
\end{cases}
\end{equation}
where  $\lambda_{i}\geq0$, the real value $\alpha \in (n-1,n),n\in \mathbb{N}^{+},i=1,2,\cdots, m-1$, and
 $0<\alpha_1<\alpha_2<\cdots<\alpha_{m-1}<\alpha,\,m\geq2$.
Similar to the Cauchy problems \eqref{pRrla} and \eqref{pRrlb}, we have the following lemma.
%%%%%%%%%%%%%%%%%%%5
\begin{lemma}\label{gldfgen}
 Assume that $B$ is an open set in $\mathbb{R}^m$
 and $f:(a,b)\times B\rightarrow \mathbb{R}$ is a function such that $f(t,u,u_1,u_2,...,u_{m-1})\in L([a,b])$ for all $(u,u_1,u_2,..,u_{m-1})\in B$.

(1) If  $u(t)\in L([a,b])$ is the solution of the initial value problem \eqref{pRrlaag}
if and only if $u(t)$ is the solution of the Volterra integral equation of the second kind
\begin{equation}\label{integralgen}
  \begin{split}
&u(t)
\\
=&\sum_{k=0}^{n-1}g_k\frac{e^{-\lambda t}(t-a)^{\alpha-k-1}}{\Gamma(\alpha-k)}\\
& + \frac{1}{\Gamma(\alpha)}\int_{a}^t e^{-\lambda( t-s)} (t-s)^{\alpha-1} f(s,u(s),{_{a}D}_s^{\alpha_1,\lambda_1}u(s),{_{a}D}_s^{\alpha_2,\lambda_2}u(s),...,{_{a}D}_s^{\alpha_{m-1},\lambda_{m-1}}u(s))ds.
 \end{split}
\end{equation}
(2) If $f(t,u,u_1,u_2,...,u_{m-1})$ is continous, then $u(t)$ is the solution of the initial value problem \eqref{pRrlbbg}
if and only if $u(t)$ is the solution of the Volterra integral equation of the second kind
 \begin{equation*}
 \begin{split}
& u(t)\\
=&\sum_{k=0}^{n-1}c_k\frac{e^{-\lambda t}(t-a)^{k}  }{\Gamma(k+1)}\\
&+ \frac{1}{\Gamma(\alpha)}\int_{a}^t e^{-\lambda( t-s)} (t-s)^{\alpha-1} f(s,u(s),{_{a}^CD}_s^{\alpha_1,\lambda_1}u(s),{_{a}^CD}_s^{\alpha_2,\lambda_2}u(s),...,{_{a}^CD}_s^{\alpha_{m-1},\lambda_{m-1}}u(s))ds.
 \end{split}
\end{equation*}
\end{lemma}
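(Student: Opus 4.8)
The plan is to transcribe, almost word for word, the proofs of Lemma~\ref{gldf} (for part~(1)) and Lemma~\ref{caputodf} (for part~(2)); the only structural novelty is that the forcing term now depends on $u$ also through the lower-order tempered derivatives ${_{a}D}_t^{\alpha_i,\lambda_i}u$ (resp. ${_{a}^{C}D}_t^{\alpha_i,\lambda_i}u$), but since these enter merely as arguments of $f$ they can be ``frozen'': for a fixed $u\in L([a,b])$ I would set
\[
F(t):=f\bigl(t,u(t),{_{a}D}_t^{\alpha_1,\lambda_1}u(t),\ldots,{_{a}D}_t^{\alpha_{m-1},\lambda_{m-1}}u(t)\bigr),
\]
which belongs to $L([a,b])$ by the hypothesis on $f$, so that the system~\eqref{pRrlaag} becomes the scalar problem ${_{a}D}_t^{\alpha,\lambda}u(t)=F(t)$ together with the stated initial conditions, to which the argument of Lemma~\ref{gldf} applies verbatim.

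Concretely, for \emph{necessity} I would apply the tempered integral ${_{a}I}_t^{\alpha,\lambda}$ to ${_{a}D}_t^{\alpha,\lambda}u=F$ and use the composite property~\eqref{rca} of Proposition~\ref{lemma2a}; the boundary contributions produced by~\eqref{rca} collapse, upon inserting $\bigl[{_{a}D}_t^{\alpha-k-1}(e^{\lambda t}u(t))\bigr]\big|_{t=a}=g_k$, to $\sum_{k=0}^{n-1}g_k e^{-\lambda t}(t-a)^{\alpha-k-1}/\Gamma(\alpha-k)$, while ${_{a}I}_t^{\alpha,\lambda}F$ is exactly the convolution integral on the right of~\eqref{integralgen}. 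For \emph{sufficiency}, starting from~\eqref{integralgen} I would apply ${_{a}D}_t^{\alpha,\lambda}$: each power term $e^{-\lambda t}(t-a)^{\alpha-k-1}$ is annihilated (as noted in the proof of Lemma~\ref{gldf}, ${_{a}D}_t^{\alpha,\lambda}\bigl(e^{-\lambda t}(t-a)^{\alpha-k-1}\bigr)/\Gamma(\alpha-k)=0$), and~\eqref{rcb} gives ${_{a}D}_t^{\alpha,\lambda}{_{a}I}_t^{\alpha,\lambda}F=F$, recovering the differential equation; to recover the initial data I would multiply~\eqref{integralgen} by $e^{\lambda t}$, apply ${_{a}D}_t^{\alpha-j-1}$ for $0\le j\le n-1$, use the power rule for ${_{a}D}_t^{\alpha-j-1}\bigl((t-a)^{\alpha-k-1}\bigr)$, and pass to the limit $t\to a$ exactly as in~\eqref{integralpv}--\eqref{integralpvq}, so that the convolution term disappears and the remaining sum reduces to $g_j$. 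Part~(2) is handled in the same way, with ${_{a}^{C}D}_t^{\alpha,\lambda}$ in place of ${_{a}D}_t^{\alpha,\lambda}$, the Caputo composite properties~\eqref{rcaa} and~\eqref{rcab} in place of~\eqref{rca} and~\eqref{rcb}, $F$ now a continuous function, and the boundary terms $\frac{d^k}{dt^k}(e^{\lambda t}u(t))\big|_{t=a}=c_k$, following the proof of Lemma~\ref{caputodf}.

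The one place where some care is genuinely needed --- and the step I expect to be the main obstacle --- is verifying that the hypotheses required to invoke the composite properties of Proposition~\ref{lemma2a} hold throughout: namely ${_{a}I}_t^{n-\alpha,\lambda}u\in AC^n[a,b]$ for part~(1) (respectively $u\in AC^n[a,b]$ for part~(2)). For the linear case this is settled by the Laplace-transform computation in Appendix~B; in general it follows, just as in Lemma~\ref{gldf}, from the integrability (resp. continuity) of $F$ together with the regularity already established for solutions of the Volterra equation, so no new ideas are required beyond those of the two lemmas already proved.
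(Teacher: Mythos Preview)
Your proposal is correct and matches the paper's approach: the paper does not give an explicit proof of this lemma but simply remarks that it is obtained ``similar to the Cauchy problems \eqref{pRrla} and \eqref{pRrlb}'', i.e., by repeating the arguments of Lemmas~\ref{gldf} and~\ref{caputodf} with the right-hand side treated as a fixed $L([a,b])$ (respectively continuous) function of $t$. Your device of freezing $F(t)=f(t,u(t),{_{a}D}_t^{\alpha_1,\lambda_1}u(t),\ldots)$ makes this reduction explicit and is exactly what the paper has in mind.
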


Suppose that $f(t,u_1,u_2,...,u_{m})$ is a continuous function satisfying the Lipschitz type condition
\begin{equation}\label{mlipz}
|f(t,u_1,u_2,...,u_m)-f(t,v_1,v_2,...,v_{m})|\leq C_{Lipg} \sum_{j=1}^{m}|u_j-v_j|
\end{equation}
for all $t\in[a,b]$ and $(u_1,u_2,...,u_{m}),\, (v_1,v_2,...,v_{m})\in B$, where the Lipschitz constant $C_{Lipg}$ does not depend on $t\in[a,b]$.
%%%%%%%%%%%%%%%%%%%5
\begin{theorem}\label{thgldfgen}
Let $B$ be an open  set in $\mathbb{R}^m$ and $f : [a, b]\times B \rightarrow \mathbb{R}$ be a function such that $f(t, u) \in L([a, b])$ for any $u\in B$ and be Lipschitz continuous.

(1) %If $\alpha\in(n-1,n), n \in \mathbb{N}^{+}, \lambda_i\geq 0, 0\leq i\leq n-1$ and
%$$ {_{a}D}_t^{\alpha-k_{i}-1}\big(e^{\lambda_{i} x}u(t)\big)\big|_{t=a}=g_{k_{i}}\in L[a, b], 0 \leq k \leq n-1,$$
%where $n_i=[\alpha_i]+1$ for $\alpha_i\in \mathbb{N}^{+}$.
%Then
There exists a unique solution $u(t)$ to the Cauchy type problem \eqref{pRrlaag} in the space $L^{\alpha,\lambda}([a,b])$.

(2) %If $\alpha\in(n-1,n), n \in \mathbb{N}^{+}, \lambda_i\geq 0, 0\leq i\leq n-1$ and
%$$ \bigg[\frac{d^{k_{i}}}{dt^{k_{i}}}(e^{\lambda_{i} t}u(t))\bigg]\bigg|_{t=a}=c_{k_{i}}\in L(a,b), 0 \leq k_{i} \leq n_{i}-1,$$ then
There exists a unique solution $u(t)$ to the Cauchy type problem \eqref{pRrlbbg} in the space $AC^n[a, b]$.
\end{theorem}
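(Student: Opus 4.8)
The plan is to reduce both Cauchy problems to their equivalent Volterra integral equations via Lemma~\ref{gldfgen} and then run the Banach fixed point argument exactly as in the proof of Theorem~\ref{thgldf}, the only new feature being the presence of the lower--order tempered fractional derivatives ${}_aD_t^{\alpha_j,\lambda_j}u$ (resp. ${}_a^{C}D_t^{\alpha_j,\lambda_j}u$) as arguments of $f$. To handle these cleanly I would not iterate on $u$ itself but on the top--order quantity $w(t)={}_aD_t^{\alpha,\lambda}u(t)$ (resp. $w(t)={}_a^{C}D_t^{\alpha,\lambda}u(t)$). By Lemma~\ref{gldfgen} one has $u=u_0+{}_aI_t^{\alpha,\lambda}w$, with $u_0$ the data term in \eqref{integralgen} (resp. its Caputo analogue), and using the tempered composition identity ${}_aD_t^{\alpha_j,\lambda}\,{}_aI_t^{\alpha,\lambda}={}_aI_t^{\alpha-\alpha_j,\lambda}$ — which follows from Definitions~\ref{Def1} and~\ref{defrlt} together with the Riemann--Liouville semigroup property, as in Proposition~\ref{lemma2a} — every occurrence of a lower--order derivative of $u$ becomes ${}_aD_t^{\alpha_j,\lambda_j}u_0+{}_aI_t^{\alpha-\alpha_j,\lambda}w$, a bounded affine function of $w$. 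Substituting these into \eqref{integralgen} (resp. the Caputo integral equation) turns the problem into a fixed point equation $w=Tw$, and a fixed point $w^{*}$ yields the solution $u^{*}=u_0+{}_aI_t^{\alpha,\lambda}w^{*}$, and conversely.

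Next I would show $T$ is a contraction on $L([a,t_1])$ (resp. $C[a,t_1]$) for $t_1$ close enough to $a$. Writing $\alpha_0:=0$, the Lipschitz hypothesis \eqref{mlipz} together with the elementary bound $\|{}_aI_t^{\sigma,\lambda}g\|_{L([a,t_1])}\le (t_1-a)^{\sigma}\|g\|_{L([a,t_1])}/\Gamma(\sigma+1)$, valid since $e^{-\lambda(t-s)}\le1$ (this is Lemma~\ref{lemcnim} in Appendix~A, used already in Theorem~\ref{thgldf}), gives
\[
\|Tw_1-Tw_2\|_{L([a,t_1])}\le C_{Lipg}\Big(\sum_{j=0}^{m-1}\frac{(t_1-a)^{\alpha-\alpha_j}}{\Gamma(\alpha-\alpha_j+1)}\Big)\,\|w_1-w_2\|_{L([a,t_1])}.
\]
Since $\alpha-\alpha_j>0$ for every $j$, the bracketed constant $W$ tends to $0$ as $t_1\to a$, so one may fix $t_1\in(a,b)$ with $W<1$. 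The operator $T$ is well defined on $L([a,t_1])$ because ${}_aI_t^{\alpha-\alpha_j,\lambda}w\in L([a,t_1])$ by the same lemma, while the data contributions ${}_aD_t^{\alpha_j,\lambda_j}u_0$ lie in $L([a,t_1])$ (in the Caputo case they are even continuous on $[a,b]$). The Banach fixed point theorem then produces a unique $w^{*}\in L([a,t_1])$, hence a unique solution on $[a,t_1]$, realized as the limit of the Picard iterates.

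The continuation to $[a,b]$ is then verbatim the step carried out after \eqref{Tmu} in the proof of Theorem~\ref{thgldf}: partition $[a,b]$ into finitely many subintervals $[t_{i-1},t_i]$ of common length $h=t_1-a$; on each step the part of the solution already determined is absorbed into the (now known) forcing, so the contraction constant is still $W<1$ and uniqueness propagates; after $M$ steps one covers $[a,b]$ and obtains a unique $u\in L([a,b])$. Finally ${}_aD_t^{\alpha,\lambda}u=w\in L([a,b])$, so $u\in L^{\alpha,\lambda}([a,b])$, proving (1); for (2) the same scheme runs on $C[a,t_1]$ using the second part of Proposition~\ref{lemma2a} for the Caputo composition identities, and since $f$ is continuous $w$ is continuous, whence $u=u_0+{}_aI_t^{\alpha,\lambda}w\in AC^{n}[a,b]$ (the polynomial--times--$e^{-\lambda t}$ part is smooth and ${}_aI_t^{\alpha,\lambda}w\in AC^n[a,b]$ for continuous $w$ and $n-1<\alpha<n$), which gives the claimed regularity and, again via Lemma~\ref{gldfgen}, uniqueness.

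I expect the only genuine difficulty to be the bookkeeping for the lower--order arguments ${}_aD_t^{\alpha_j,\lambda_j}u$: one must check that the fixed point operator lives on the correct space and that all $m$ of those terms are absorbed simultaneously into a single contraction constant. Solving for $w={}_aD_t^{\alpha,\lambda}u$ and invoking the tempered composition/semigroup identity is precisely what makes this routine, since it converts each lower--order term into the bounded operator ${}_aI_t^{\alpha-\alpha_j,\lambda}$ whose $L$--operator norm on $[a,t_1]$ is the explicitly small number $(t_1-a)^{\alpha-\alpha_j}/\Gamma(\alpha-\alpha_j+1)$; everything else is a direct transcription of Theorems~\ref{thgldf} and~\ref{thgldfcc}.
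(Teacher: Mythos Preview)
Your argument is correct and reaches the same contraction constant $C_{Lipg}\sum_{j=0}^{m-1}(t_1-a)^{\alpha-\alpha_j}/\Gamma(\alpha-\alpha_j+1)$ as the paper, but the route is genuinely different. The paper iterates directly on $u$: it defines $(Tu)(t)=u_0(t)+{}_aI_t^{\alpha,\lambda}f(t,u,{}_aD_t^{\alpha_1,\lambda_1}u,\dots)$ and, when estimating $\|Tu-Tv\|_{L}$, must deal with ${}_aI_t^{\alpha,\lambda}|{}_aD_t^{\alpha_j,\lambda_j}(u-v)|$; it rewrites this via the composition formula \eqref{rca} and then argues that the boundary terms ${}_aD_t^{\alpha_j-k_j-1}\!\big(e^{\lambda t}(u-v)\big)\big|_{t=a}$ vanish because both iterates carry the same initial data. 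Your choice to iterate on $w={}_aD_t^{\alpha,\lambda}u$ (so that $u=u_0+{}_aI_t^{\alpha,\lambda}w$ and each lower-order term becomes ${}_aD_t^{\alpha_j,\lambda_j}u_0+{}_aI_t^{\alpha-\alpha_j,\lambda}w$) sidesteps that step entirely: the $u_0$ contribution cancels in $Tw_1-Tw_2$, and only the manifestly bounded operators ${}_aI_t^{\alpha-\alpha_j,\lambda}$ remain. A further dividend is that the regularity ${}_aD_t^{\alpha,\lambda}u\in L([a,b])$ is immediate, whereas the paper closes with a separate limit argument (the analogue of \eqref{caputox}) to obtain it. One caveat that applies equally to both proofs: when $\lambda_j\neq\lambda$ the identity ${}_aD_t^{\alpha_j,\lambda_j}{}_aI_t^{\alpha,\lambda}={}_aI_t^{\alpha-\alpha_j,\lambda}$ is not literally true, and a factor $e^{(\lambda_j-\lambda)t}$ must be carried through; this does not affect the contraction estimate on a bounded interval, but you should say so explicitly.
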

\begin{proof}
Similar to Theorems \ref{thgldf} and \ref{thgldfcc}. We begin our proof from the integral equations given in Lemma \ref{gldfgen}. We only prove the  Cauchy type problem \eqref{pRrlaag}.
Let $t_1$ belong to $(a,b)$ %( $[a, t_1] \subset [a, b](a < t_1 < b)$)
such that the inequality
$$C_{Lipg}~\sum_{j=0}^{m-1}\frac{(t_1-a)^{\alpha-\alpha_j}}{\Gamma(\alpha-\alpha_j+1)}<1,$$
holds.%, and with the help of Banach fixed point theorem to prove the existence of a unique solution $u(t)=u^{*}(t) \in L(a, t_1)$ to Eq. \eqref{integral} on the interval $[a, x_1]$.
  The operator corresponding to \eqref{integralgen} takes the form
 \begin{equation}\label{gTa}
(Tu)(t)=u_0(t)+\frac{1}{\Gamma(\alpha)}\int_{a}^t e^{-\lambda( t-s)} (t-s)^{\alpha-1} f(s,u(s),{_{a}D}_s^{\alpha_1,\lambda_1}u(s),...,{_{a}D}_s^{\alpha_{m-1},\lambda_{m-1}}u(s))ds,
\end{equation}
where
\begin{equation}\label{gu0}
u_0(t)=\sum_{k=0}^{n-1}g_k\frac{e^{-\lambda t}(t-a)^{\alpha-k-1}}{\Gamma(\alpha-k)}.
\end{equation}

From the Lipschitz condition \eqref{mlipz} it directly follows that
 \begin{equation*}
 \begin{split}
&|{_{a}I_t^{\alpha,\lambda}}f(t,u(t),({_{a}D}_t^{\alpha_1,\lambda_1}u)(t),...,({_{a}D}_t^{\alpha_{m-1},\lambda_{m-1}}u)(t)) \\
&-{_{a}I_t^{\alpha,\lambda}}f(t,v(t),({_{a}D}_t^{\alpha_1,\lambda_1}v)(t),...,({_{a}D}_t^{\alpha_{m-1},\lambda_{m-1}}v)(t))| \\\
&\leq C_{Lipg}~ {_{a}I_t^{\alpha,\lambda}}\bigg(\bigg|\sum_{j=0}^{m-1}{_{a}D}_t^{\alpha_j,\lambda_j}(u-v)\bigg|\bigg)(t)\\
%&\leq C_{Lipg}~ \sum_{j=1}^{n}\bigg({_{a}I_t^{\alpha-\alpha_j,\lambda}}\big|{I_t^{\alpha_j,\lambda}}{_{a}D}_t^{\alpha_j,\lambda_j}(u(t)-v(t))\big|\bigg)\\
&\leq C_{Lipg}~ \sum_{j=0}^{m-1}\bigg({_{a}I_t^{\alpha-\alpha_j,\lambda}}\big|{I_t^{\alpha_j,\lambda}}{_{a}D}_t^{\alpha_j,\lambda_j}(u(t)-v(t))\big|\bigg).
\end{split}
 \end{equation*}
Furthermore, using the composition formula \eqref{rca}, we have
\begin{equation*}
 \begin{split}
&|{_{a}I_t^{\alpha,\lambda}}f(t,u(t),{_{a}D}_t^{\alpha_1,\lambda_1}u(t),...,{_{a}D}_t^{\alpha_{m-1},\lambda_{m-1}}u(t))
\\
&-{_{a}I_t^{\alpha,\lambda}}f(t,v(t),{_{a}D}_t^{\alpha_1,\lambda_1}v(t),...,{_{a}D}_t^{\alpha_{m-1},\lambda_{m-1}}v(t))| \\\
&\leq C_{Lipg}~ \sum_{j=0}^{m-1}\bigg({_{a}I_t^{\alpha-\alpha_j,\lambda}}\bigg|(u-v)
\\
&~~ -\sum_{k_{j}=0}^{n_{j}-1}\frac{e^{-\lambda t}(t-a)^{\alpha_j-k_{j}-1}}{\Gamma(\alpha_j-k_{j})}\big[{_{a}D}_t^{\alpha_j-k_{j}-1}(e^{\lambda t}(u(t)-v(t)))\big|_{t=a}\big]\bigg|\bigg)(t),
\end{split}
 \end{equation*}
where $n_j$ is the smallest integer larger than or equal to $\alpha_j$.

From the given initial conditions, it can be checked that  ${_{a}D}_t^{\alpha_j-k_{j}-1}(e^{\lambda t}(u(t)-v(t)))\big|_{t=a}=0$. Then
\begin{equation}
 \begin{split}
&|{_{a}I_t^{\alpha,\lambda}}f(t,u(t),{_{a}D}_t^{\alpha_1,\lambda_1}u(t),...,{_{a}D}_t^{\alpha_{m-1},\lambda_{m-1}}u(t))
\\
& -{_{a}I_t^{\alpha,\lambda}}f(t,v(t),{_{a}D}_t^{\alpha_1,\lambda_1}v(t),...,{_{a}D}_t^{\alpha_{m-1},\lambda_{m-1}}v(t))| \\\
&\leq C_{Lipg}~ \sum_{j=0}^{m-1}\bigg({_{a}I_t^{\alpha-\alpha_j,\lambda}}\big|(u(t)-v(t))\big|\bigg),
\end{split}
 \end{equation}
 for any  $t\in[a,b]$.
Taking $t=t_1$ in above the formula and applying \eqref{TCDDb}, we get
\begin{equation*}
\|Tu_1-Tu_2\|_{L([a,t_1])}
\leq C_{Lipg} K \|u_1(t)-u_2(t)\|_{L([a,t_1])},K=C_{Lipg} ~\sum_{j=0}^{m-1}\frac{(t_1-a)^{\alpha-\alpha_j}}{\Gamma(\alpha-\alpha_j+1)}.
 \end{equation*}
It follows that there exists a unique solution  $u^{*}(t)$ to Eq. \eqref{integralgen} in $L([a,t_1])$.
This solution is obtained as a limit of the convergent sequence $(T^{j}u^{*}_0)(t)=u_{j}(t)$, and holds
\begin{equation}\label{Taitea}
\lim_{j\rightarrow \infty} \|T^{j}u^{*}_0-u^*\|_{L([a,t_1])}=0,
\end{equation}
i.e.,
\begin{equation}\label{Tmua}
\lim_{j\rightarrow \infty} \|u_j-u^*\|_{L([a,t_1])}=0.
\end{equation}
With the same fashion of proving Theorem \ref{thgldf}, we can show that there exists an unique solution $u(t)\in L^{\alpha,\lambda}([a,b])$
to Eq. \eqref{integralgen}.
In addition,
\begin{equation*}
 \begin{split}
&\|_{a}D_t^{\alpha,\lambda}u_j-{_{a}D_t^{\alpha,\lambda}}u\|_{L([a,b])}\\
&=\|f(t,u_{j-1},{_{a}D}_t^{\alpha_1,\lambda_1}u_{j-1},...,{_{a}D}_t^{\alpha_{m-1},\lambda_{m-1}}u_{j-1})
\\
&
~~-f(t,u,{_{a}D}_t^{\alpha_1,\lambda_1}u,...,{_{a}D}_t^{\alpha_{m-1},\lambda_{m-1}}u)\|_{L([a,b])}\\
&\leq K\|u_{j-1}-u\|_{L([a,b])}\rightarrow 0,~~j\rightarrow\infty,
 \end{split}
\end{equation*}
which implies that $_{a}D_t^{\alpha,\lambda}u(t)\in L([a,b])$.
\end{proof}

%%%%%%%%=========================
\section{Numerical algorithm for the tempered fractional ODE}\label{sec:algorithm}
The well-posedness of the tempered fractional ODE has been carefully discussed in the above sections. Usually, it is hard to find the analytical solutions of the tempered fractional ODE, especially for the nonlinear case. Efficient numerical algorithm naturally becomes an urgent topic for this type of equation. Now, we extend the so-called Jacobi-predictor-corrector algorithm \cite{Zhao:13} to the the tempered fractional ODE; and its striking benefits are still kept, including having any desired convergence orders and the linearly increasing computational cost with the time $t$.

%Although we have proved that the tempered fractional ordinary differential equations, the analytic solutions
%of the nonlinear differential equations are very difficulty to be found. So the numerical algorithm should be employed.
%In this section, we  discuss the numerical algorithm for tempered fractional ordinary differential equations on time domain $[0,T]$.

\subsection{Jacobi-predictor-corrector algorithm}
The equations \eqref{integral} and \eqref{integralcapa} can be rewritten as the following  Volterra integral equation
\begin{equation}\label{numericalproblem}
%\begin{cases}
% \displaystyle
% u(x)=e^{-\lambda t}v(t),\\
%\displaystyle
u(t)=e^{-\lambda t}\sum_{k=0}^{n-1}a_k(t)
+ \frac{e^{-\lambda t}}{\Gamma(\alpha)}\int_{a}^t (t-s)^{\alpha-1} g(s,u(s))ds,
%\end{cases}
\end{equation}
where  $a_k(t)=c_k\frac{(t-a)^{k}}{\Gamma(k+1)}$ for tempered Caputo derivative, and $a_k(t)=g_k\frac{(t-a)^{\alpha-k-1}}{\Gamma(\alpha-k)}$ for tempered Riemann-Liouville derivative, and $g(t,u(t))=e^{\lambda t} f(t,u(t))$.

%\begin{remark}
%For the linear cases of \eqref{integral} and \eqref{integralcapa}, i.e., $f(t,u)$ is a linear function of $u$, their Volterra integral equations can also be rewritten as
%\begin{equation}\label{numericalproblem2}
%\begin{cases}
%\displaystyle
% u(t)=e^{-\lambda t}v(t),\\
%\displaystyle
%v(t)=\sum_{k=0}^{n-1}a_k(t)
%+ \frac{1}{\Gamma(\alpha)}\int_{a}^t (t-s)^{\alpha-1} f(s,v(s))ds.
%\end{cases}
%\end{equation}
%\end{remark}
%Eq. \eqref{numericalproblem2} can be directly solved by Jacobi-predictor-corrector algorithm given in \cite{Zhao:13}. In the following, we focus on providing the fundamental algorithm for numerically solving the initial value
%problems \eqref{numericalproblem}.

%In this section we shall derive the fundamental algorithm for numerically solving the initial value
%problems \eqref{numericalproblem}. To design the numerical algorithm, the main  tasks is to
%discrete the  Volterra integral equation which includes $v(t)$. To do this,
Firstly, we transform the original equation into
\begin{eqnarray}\label{numer1}
u(t)&=&e^{-\lambda t}\sum_{k=0}^{n-1}a_k(t)
+ \frac{e^{-\lambda t}}{\Gamma(\alpha)}\int_{a}^t (t-s)^{\alpha-1} g(s,u(s))ds
\nonumber\\
&=&e^{-\lambda t}\sum_{k=0}^{n-1}a_k(t)+\frac{e^{-\lambda t}}{\Gamma(\alpha)}\bigg(\frac{t-a}{2}\bigg)^\alpha
\int^{1}_{-1}(1-z)^{\alpha-1}\tilde{g}\big(z,\tilde{u}(z)\big)dz,
\end{eqnarray}
where
\begin{equation*}
\tilde{g}\big(z,\tilde{u}(z)\big)=g\bigg(\frac{t-a}{2}z+\frac{t+a}{2},u\left(\frac{t-a}{2}z+\frac{t+a}{2}\right)\bigg),~~-1\leq z \leq 1;
\end{equation*}
\begin{equation*}
\tilde{u}(z)=u\bigg(\frac{t-a}{2}z+\frac{t+a}{2}\bigg), ~~-1\leq z \leq 1.
\end{equation*}
Using $(N +1)$-point Jacobi-Gauss-Lobatto
quadrature to approximate  the
integral in \eqref{numer1}  yields
\begin{equation}\label{numer2}
u(t)\approx e^{-\lambda t}\sum_{k=0}^{n-1}a_k(t)+\frac{e^{-\lambda t}}{\Gamma(\alpha)}\bigg(\frac{t-a}{2}\bigg)^\alpha
\sum_{j=0}^{N}\omega_{j}~\tilde{g}\big(z_j,\tilde{u}(z_j)\big),
\end{equation}
where we choose  $\omega(z)=(1-z)^{\alpha-1}(1+z)^0$ as the weight function; $\{z_j\}_{j=0}^{N}$ and $\{\omega_{j}\}_{j=0}^{N}$
are the $(N + 1)$-degree Jacobi-Gauss-Lobatto nodes and their corresponding
Jacobi weights in the reference interval
$[-1,1]$, respectively; see, e.g., \cite{Shen:11}.

Now we turn to describe the computational scheme for Eq. \eqref{numer1}. For this purpose,
we define a grid in the interval $[a,b]$ with $M+1$ equidistant
nodes $t_j$, given by
\begin{equation}\label{numer3}
t_j=j\tau+a,~~~j=0,\cdots,M,
\end{equation}
where $\tau=(b-a)/M$ is the stepsize. Suppose that we have got the numerical values of $u(t)$ at
$t_0,t_1,\cdots,t_n$, which are denoted as $u_0,u_1,\cdots,u_n$; now
we are going to compute the value of $u(t)$ at $t_{n+1}$, i.e., $u_{n+1}$.
From Eq. (\ref{numer2}), we have
\begin{eqnarray}\label{numer4}
&&u(t_{n+1})
\nonumber\\
&\approx & e^{-\lambda t_{n+1}}\sum_{k=0}^{n-1}a_k(t_{n+1})
+\frac{e^{-\lambda t_{n+1}}}{\Gamma(\alpha)}\bigg(\frac{t_{n+1}-a}{2}\bigg)^\alpha
\sum_{j=0}^{N}\omega_{j}~\tilde{g}_{n+1}\big(z_j,\tilde{u}_{n+1}(z_j)\big)
\nonumber\\
&= & e^{-\lambda t_{n+1}}\sum_{k=0}^{n-1}a_k(t_{n+1})
+\frac{1}{\Gamma(\alpha)}\bigg(\frac{t_{n+1}-a}{2}\bigg)^\alpha
\sum_{j=0}^{N}\omega_{j}~e^{\frac{t_{n+1}-a}{2}\lambda(z_j-1)}\tilde{f}_{n+1}\big(z_j,\tilde{u}_{n+1}(z_j)\big),
\end{eqnarray}
where
\begin{equation*}
\tilde{f}_{n+1}\big(z,\tilde{u}_{n+1}(z)\big)=f\bigg(\frac{t_{n+1}-a}{2}z+\frac{t_{n+1}+a}{2},u\left(\frac{t_{n+1}-a}{2}z+\frac{t_{n+1}+a}{2}\right)\bigg),~~-1\leq z \leq 1,
\end{equation*}
\begin{equation*}
\tilde{u}_{n+1}(z)=u\bigg(\frac{t_{n+1}-a}{2}z+\frac{t_{n+1}+a}{2}\bigg), ~~-1\leq z \leq 1.
\end{equation*}

To compute the second summation term of \eqref{numer4},  we need
to evaluate the value of $f$ at the point $t_{n+1}$ due to
$\tilde{f}_{n+1}\big(z_{N},\tilde{u}_{n+1}(z_{N})\big)=f\big(t_{n+1},u(t_{n+1})\big)$.
It can be numerically approximated by using the piecewise linear interpolation of the term $f$ \cite{Diethelm:02,Diethelm:04,Deng:07}. Here, we do it using the technique given in our previous work
\cite{Zhao:13}. More concretely, we interpolate the function $f$ by the known ``neighborhood" points of $t_{n+1}$.
 For the other values
of $\tilde{f}_{n+1}\big(z_j,\tilde{u}_{n+1}(z_j)\big),\,0 \leq j
\leq N-1$, we can also obtain them based on the interpolation of
$f$ at the time nodes located in the ``neighborhood" points of $z_j$
(should be $(1+z_j)t_{n+1}/2$ as to variable $t$). Denote $N_I$ as
the number of time nodes used for the interpolation. In  practical application, we use the improved
predictor-corrector formulas given in \cite{Deng:07} to get the values at $t_0,t_1,\cdots,t_{N_I-1}$
as the known `initial' values. For the criterion of choosing the ``neighborhood" points, refer to \cite{Zhao:13}.

%For predicting the value of $g$ at $t_{n+1}$,
%we use the values of $g$ at $$t_{n-N_I+1},\,t_{n-N_I+2},\,\cdots,\,t_{n-1},\,t_n.$$ For getting the
%values of $\tilde{g}_{n+1}$ at $s_j,\,0 \leq j \leq M-1$, the way to choose the ``neighborhood"
%time nodes is as follows: $(\textrm{i})$ try to make  $|ln-rn|$ as small as possible;
%$(\textrm{ii})$ make $ln \ge rn$ if possible, where $ln$ and $rn$ are respectively the number of
%equi-spaced points used on the left and right hand side of $s_j$ (should be $(1+s_j)t_{n+1}/2$ as
%to variable $t$), obviously, $ln+rn=N_I$.

Collecting the above analysis, we get the predictor-corrector formulas of
(\ref{numericalproblem}) as
%\begin{equation}\label{transu}
%u_{n+1}=e^{-\lambda t_{n+1}}u_{n+1},
%\end{equation}
\begin{eqnarray}\label{pre}
&&u_{n+1}
\nonumber\\
&=&e^{-\lambda t_{n+1}}\sum_{k=0}^{n-1}a_k(t_{n+1})
\nonumber\\
&&+\frac{1}{\Gamma(\alpha)}\bigg(\frac{t_{n+1}-a}{2}\bigg)^\alpha
\Big(\sum_{j=0}^{N-1}\omega_{j}e^{\frac{t_{n+1}-a}{2}\lambda(z_j-1)}\tilde{f}_{n+1}\big(z_j,\tilde{u}_{n+1}(z_j)\big)
+\omega_{N}f(t_{n+1},u^P_{n+1})\Big),
\end{eqnarray}
and
\begin{equation}\label{cor}
u^P_{n+1}=e^{-\lambda t_{n+1}}\sum_{k=0}^{n-1}a_k(t_{n+1})
+\frac{1}{\Gamma(\alpha)}\bigg(\frac{t_{n+1}-a}{2}\bigg)^\alpha
\sum_{j=0}^{N}\omega_{j}e^{\frac{t_{n+1}-a}{2}\lambda(z_j-1)}\tilde{f}^P_{n+1}\big(z_j,\tilde{u}_{n+1}(z_j)\big),
\end{equation}
where $\{\tilde{f}^P_{n+1,j}\}_{j=0}^{N}$ in (\ref{cor}) means that all the values of
$\tilde{f}_{n+1}$ at the Jacobi-Gauss-Lobatto nodes are got by using the interpolations based on
the values of $\{f(t_i,u_i)\}_{i=0}^n$; whereas $\{\tilde{f}_{n+1,j}\}_{j=0}^{N-1}$ in
(\ref{pre}) are obtained by using the interpolations based on the values of
$\{f(t_j,u_j)\}_{j=0}^n$ and $f(t_{n+1},u^P_{n+1})$. From the computational scheme (\ref{pre})-(\ref{cor}), it can be clearly seen that the computational cost linearly increase with $n$ (or time $t$).

With the similar methods given in \cite{Zhao:13}, we can get the following estimates 
for the  Volterra integral system \eqref{numericalproblem}.
%%%%%%%%%%%%%%%%%%5
\begin{theorem}\label{thgldfccr}
If $g(t,u(t))$ is Lipschitz continuous with respect to the second variable, and has the form
$$g(t,u(t))=\sum_{k=1}^{m}t^{\mu_k}w_k(t)+\delta,$$
where $w_k(t), 1\leq N_I\leq\mu_1\leq...\leq \mu_m$ are sufficiently smooth and $m$ can be $+\infty$, $\delta$ is a constant,
 then there exists a constant  $C$ being independent of $n,\tau,N$, such that
$$\max_{1\leq n+1\leq M}\big|u(t_{n+1})-u_{n+1}\big|\leq C\tau^{N_I}.$$  
%Moreover,  we have
%$$\max_{1\leq n+1\leq M}\big|u(t_{n+1})-u_{n+1}\big|\leq C\tau^{N_I}.$$
\end{theorem}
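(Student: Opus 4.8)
The plan is to reduce the convergence analysis for the tempered problem \eqref{numericalproblem} to the untempered Jacobi–predictor–corrector analysis of \cite{Zhao:13} by absorbing the exponential factors into the data. Observe that \eqref{numericalproblem} has exactly the structure of the classical fractional Volterra equation studied in \cite{Zhao:13}, namely $v(t)=\sum_k a_k(t)+\frac{1}{\Gamma(\alpha)}\int_a^t(t-s)^{\alpha-1}g(s,v(s))ds$, provided one writes $v(t)=e^{\lambda t}u(t)$: indeed multiplying \eqref{numericalproblem} by $e^{\lambda t}$ gives $v(t)=\sum_{k=0}^{n-1}a_k(t)+\frac{1}{\Gamma(\alpha)}\int_a^t(t-s)^{\alpha-1}g(s,u(s))ds$, and since $g(s,u(s))=e^{\lambda s}f(s,u(s))$ depends on $u(s)=e^{-\lambda s}v(s)$ in a Lipschitz way, $g$ is a Lipschitz function of the new unknown $v$ as well. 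First I would check that the numerical scheme \eqref{pre}–\eqref{cor}, after the same substitution $u_{n+1}=e^{-\lambda t_{n+1}}v_{n+1}$, coincides node-for-node with the scheme of \cite{Zhao:13} applied to the $v$-equation — the factors $e^{\frac{t_{n+1}-a}{2}\lambda(z_j-1)}$ are precisely what is produced by the change of variables $s=\frac{t_{n+1}-a}{2}z+\frac{t_{n+1}+a}{2}$ inside $e^{-\lambda(t_{n+1}-s)}$, so this identification is exact, not merely asymptotic.

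Once that reduction is in place, the next step is to verify that the structural hypothesis on the right-hand side is preserved. The assumption $g(t,u(t))=\sum_{k=1}^m t^{\mu_k}w_k(t)+\delta$ with $w_k$ smooth and $1\le N_I\le\mu_1\le\cdots$ is already stated for the function $g$ that drives \eqref{numericalproblem}, so it transfers verbatim to the $v$-equation. The hypothesis $\mu_1\ge N_I$ is the crucial regularity requirement: it guarantees that the transformed integrand $\tilde g(z,\tilde u(z))$, as a function of $z\in[-1,1]$, has enough bounded derivatives (or the right behaviour at the endpoint $z=-1$ corresponding to $s=a$) for the $(N+1)$-point Jacobi–Gauss–Lobatto quadrature with weight $(1-z)^{\alpha-1}$ to achieve accuracy $O(\tau^{N_I})$ on each subinterval, uniformly. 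I would then invoke the quadrature error estimate and the interpolation error estimate (the $N_I$-point ``neighbourhood'' interpolation of $f$) exactly as in \cite{Zhao:13}, obtaining a local consistency error of order $\tau^{N_I}$ at each step $t_{n+1}$.

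The final step is the discrete Gronwall argument propagating the local error to a global one. Writing $e_{n+1}=v(t_{n+1})-v_{n+1}$, subtracting the exact equation from the corrector \eqref{pre} and using the Lipschitz bound $|g(s,v)-g(s,w)|\le C|v-w|$ (which includes the $e^{\lambda s}$ and $e^{-\lambda s}$ factors in a fixed constant over $[a,b]$) together with the quadrature/interpolation consistency bounds, one gets $|e_{n+1}|\le C\tau^{N_I}+C\tau\sum_{j\le n}(t_{n+1}-t_j)^{\alpha-1}|e_j|$ plus the predictor contribution, which is handled by first estimating $|v(t_{n+1})-v^P_{n+1}|$ the same way. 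Applying the discrete analogue of the weakly-singular Gronwall inequality (Lemma \ref{gronwalla}) yields $\max_{1\le n+1\le M}|e_{n+1}|\le C\tau^{N_I}$, and since $u(t_{n+1})-u_{n+1}=e^{-\lambda t_{n+1}}e_{n+1}$ with $e^{-\lambda t_{n+1}}\le 1$ for $\lambda\ge0$ and $t_{n+1}\ge a\ge0$ (or bounded on $[a,b]$ in general), the claimed bound follows. The main obstacle I anticipate is not the Gronwall step but the careful verification that the endpoint singularity at $s=a$ — present through the term $(t-a)^{\alpha-k-1}$ in the Riemann–Liouville case and through the behaviour of $w_k$ at $t=0$ — is genuinely controlled by the condition $\mu_1\ge N_I$ so that the Jacobi quadrature on the first subinterval $[a,t_1]$ does not degrade the order; this is exactly the delicate point already addressed in \cite{Zhao:13}, and here it only needs to be checked that tempering introduces no new singular behaviour, which it does not since $e^{\pm\lambda t}$ is real-analytic.
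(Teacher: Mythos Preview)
Your proposal is essentially aligned with the paper, which does not give a proof at all: it simply states that ``with the similar methods given in \cite{Zhao:13}'' the estimate follows. Your substitution $v(t)=e^{\lambda t}u(t)$ and subsequent appeal to the untempered analysis is precisely the natural way to make that one-line citation rigorous, and the Gronwall propagation you sketch is the standard closing argument.

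One small inaccuracy worth flagging: the tempered scheme \eqref{pre}--\eqref{cor} and the untempered scheme of \cite{Zhao:13} applied to the $v$-equation do \emph{not} coincide node-for-node after the substitution $u_{n+1}=e^{-\lambda t_{n+1}}v_{n+1}$. The paper's scheme interpolates the values $f(t_i,u_i)$ and then multiplies by the \emph{exact} exponential factor $e^{\lambda s_j}$ at each quadrature node $s_j$, whereas a direct application of \cite{Zhao:13} to the $v$-equation would interpolate the values $g(t_i,v_i)=e^{\lambda t_i}f(t_i,u_i)$ and evaluate the interpolant at $s_j$. Since polynomial interpolation does not commute with multiplication by $e^{\lambda t}$, the two schemes differ. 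This does not damage the argument, because the discrepancy is itself an $O(\tau^{N_I})$ interpolation error (the factor $e^{\lambda t}$ is smooth on $[a,b]$), so the consistency bound survives; you should just replace the claim of exact identification with the observation that the two schemes agree up to an additional $O(\tau^{N_I})$ term, which is then absorbed into the local truncation error before the discrete Gronwall step.
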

This theorem shows that the scheme (\ref{pre})-(\ref{cor}) potentially have any desired convergence order by adjusting the number of interpolation points $N_I$.

%%%%%%%%%%%%%%%%%%%%%%%%%%%%%%%%%%%%%%%%%%%
\subsection{Numerical test}
In this subsection, we consider two simple numerical examples to show
the numerical errors and convergence orders of the Jacobi-predictor-corrector method. The two examples are Caputo tempered ODEs; solving the Riemann-Liouville tempered ODEs can be done in the same way, so is omitted here.
\begin{example}\label{example2}
Consider the Caputo tempered factional
initial value problem
\begin{equation}\label{rfrodes2}
_{0}^{C}D_{t}^{\alpha,\lambda}u(t)=e^{-\lambda t}
\bigg(\frac{\Gamma(9)}{\Gamma(9-\alpha)}t^{8-\alpha}+t^8+\frac{9}{4}t^\alpha+\frac{9}{4}\Gamma(\alpha+1)\bigg)-u(t).
\end{equation}
The initial values are chosen as $u(t)|_{t=0}=0$ and $\big[\frac{d}{dt}(e^{\lambda t}u(t))\big]\big|_{t=0}=0$ for $1<\alpha<2$, and as $u(t)|_{t=0}=0$ for $0<\alpha<1$.
Using the formula
\begin{equation}\label{formula}
{_0}D_t^{\alpha,\lambda}\big[e^{-\lambda t}t^{\mu}\big]=\frac{\Gamma(\mu+1)}{\Gamma(\mu-\alpha+1)}e^{-\lambda t}t^{\mu-\alpha},
\end{equation} it can be checked that the exact solution of this initial value problem is
$$u(t)=e^{-\lambda t}\bigg(t^8+\frac{9}{4}t^{\alpha}\bigg).$$
%Equation \eqref{rfrodes2} with the initial value conditions can be rewritten as the system
%\begin{equation}\label{eqproblem2}
%\begin{cases}
%\displaystyle
% u(x)=e^{-\lambda t}v(t),\\
%\displaystyle
%v(t)=\big[v(t)\bigg]\big|_{t=0}+\big[\frac{d}{dt}(v(t))\big]\big|_{t=0}
%+ \frac{1}{\Gamma(\alpha)}\int_{a}^t (t-s)^{\alpha-1} g(s,v(s))ds,
%\end{cases}
%\end{equation}
\end{example}
%=======================================================
\begin{table}[h]
\centering
\caption{Maximum errors and convergence orders of Example \ref{example2} solved by the scheme (\ref{pre})-(\ref{cor}) with $T=1,N=20,N_I=7$, and  $\alpha=0.5$.}
\vspace{0.2cm}
  \begin{tabular}{|c|c|c|c|c|c|c|c|c|}
\hline
 &\multicolumn{2}{c|}{$\lambda=0$} & \multicolumn{2}{c|}{$\lambda=2$} & \multicolumn{2}{c|}{$\lambda=6$}\\
\cline{2-3} \cline{3-4}\cline{4-7}
 $\tau$ & error  & order &  error & order & error & order\\
\hline
 1/10  & 1.5207e-004 &        &2.3516e-005 &         &1.4300e-006 &   \\
 1/20  & 4.6202e-007 & 8.3626 &1.4040e-007 & 7.3879  &3.3507e-008 & 5.4154\\
 1/40  & 1.6877e-009 & 8.0967 &6.3106e-010 & 7.7976  &2.7846e-010 & 6.9109 \\
 1/80  & 8.1135e-012 & 7.7005 &2.5491e-012 & 7.9517  &1.4371e-012 & 7.5982\\
 1/160 & 3.5305e-014 & 7.8443 &1.2794e-014 & 7.6383  &7.0913e-015 & 7.6629\\
   \hline
  \end{tabular}
\label{tabexm1}
\end{table}
\begin{table}[h]
\centering
\caption{Maximum errors and convergence orders of Example \ref{example2} solved by the scheme (\ref{pre})-(\ref{cor}) with $T=1,N=20,N_I=6$, and $\alpha=1.0$.}
\vspace{0.2cm}
  \begin{tabular}{|c|c|c|c|c|c|c|c|c|}
\hline
 &\multicolumn{2}{c|}{$\lambda=0$} & \multicolumn{2}{c|}{$\lambda=2$} & \multicolumn{2}{c|}{$\lambda=6$}\\
\cline{2-3} \cline{3-4}\cline{4-7}
 $\tau$ & error  & order &  error & order & error & order\\
\hline
1/10  & 8.1108e-005 &        &1.2528e-005&          &1.1365e-006 & \\
 1/20  & 7.8788e-007 & 6.6857 &1.5673e-007 & 6.3207  &2.3299e-008 & 5.6082  \\
 1/40  & 1.2817e-008 & 5.9418 &2.1909e-009 & 6.1606  &3.2657e-010 & 6.1567\\
 1/80  & 2.2418e-010 & 5.8373 &3.4124e-011 & 6.0046  &4.4768e-012 & 6.1888 \\
 1/160 & 3.6193e-012 & 5.9528 &5.3461e-013 & 5.9962  &6.7955e-014 & 6.0417\\ % \hline
  \hline
  \end{tabular}
\label{tabexm2}
\end{table}
%%%%%%%%%%%%%%%%%%%%%%%%%%%%%%%%
\begin{table}[h]
\centering
\caption{Maximum errors and convergence orders of Example \ref{example2} solved by the scheme (\ref{pre})-(\ref{cor}) with $T=1,N=20,N_I=6$, and $\alpha=1.5$.}
\vspace{0.2cm}
  \begin{tabular}{|c|c|c|c|c|c|c|c|}
\hline
&\multicolumn{2}{c|}{$\lambda=0$} & \multicolumn{2}{c|}{$\lambda=2$} & \multicolumn{2}{c|}{$\lambda=6$}\\
\cline{2-3} \cline{3-4}\cline{4-7}
 $\tau$ & error  & order &  error & order & error & order\\
\hline
    1/10  & 6.6386e-005 &        &9.6009e-006  &         &8.5068e-007 & \\
    1/20  & 9.2847e-007 & 6.1599 &1.4297e-007  & 6.0694  &1.9943e-008 & 5.4147  \\
    1/40  & 1.5767e-008 & 5.8799 &2.1338e-009  & 6.0661  &3.0437e-010 & 6.0339\\
    1/80  & 2.3505e-010 & 6.0678 &3.5138e-011  & 5.9242  &3.8203e-012 & 6.3159 \\
    1/160 & 3.8498e-012 & 5.9320 &5.3434e-013  & 6.0391  &6.7433e-014 & 5.8241\\ \hline
  \end{tabular}
\label{tabexm3}
\end{table}
In our numerical algorithm, the values of $u(t)$ at points $\{t_{j}\}_{j=0}^{N_I}$ are calculated
by the improved Adam's methods \cite{Deng:07}. The numerical results are reported in Tables \ref{tabexm1}-\ref{tabexm3}.
From Tables \ref{tabexm1}-\ref{tabexm3}, we can see that the convergence orders are in good agreement with
the theory presented in Theorem  \ref{thgldfccr}.

%%%%%%%%%%%%%%%%%%%%%%%%%%%%%%%%%%%%%%%%%%%%%
\begin{example}\label{example3}
In this example, we examine the following
initial value problem
\begin{equation}\label{rfrodes3}
_{0}^{C}D_{t}^{\alpha,\lambda}u(t)=-\mu ~u(t),~\mu>0.
\end{equation}
The initial values are given as $e^{\lambda t}u(t)|_{t=0}=1$ and $\big[\frac{d}{dt}(e^{\lambda t}u(t))\big]\big|_{t=0}=0$ for $\alpha\in(1,2)$, and as $e^{\lambda t}u(t)|_{t=0}=1$ for $\alpha\in(0,1)$.
Using the Laplace transform presented in Appendix B, we have
\begin{equation}\label{formula3a}
(s+\lambda)^{\alpha}\widetilde{u}(s)-(s+\lambda)^{\alpha-1}=-\mu~\widetilde{u}(s).
\end{equation}
Then
\begin{equation}\label{formula3b}
\widetilde{u}(s)=\frac{(s+\lambda)^{\alpha-1}}{(s+\lambda)^{\alpha}+\mu}.
\end{equation}
Employing the Laplace transform involving the derivative of
the Mittag-Leffler function \cite{Podlubny:99}
\begin{equation}\label{M-L}
\mathcal {L}\big\{t^{\beta-1}
\big(E_{\alpha,\beta}(-at^{\alpha})\big)\big\}=\frac{s^{\alpha-\beta}}{s^{\alpha}+a},
\quad \textrm{Re}(s)>|a|^{1/\alpha},
\end{equation}
 we can check that the exact solution of this initial value problem is
$$u(t)=e^{-\lambda t}E_{\alpha,1}(-\mu ~t^{\alpha}).$$
Here the generalized Mittag-Leffler function $E_{\alpha,\beta}(\cdot)$ is given by \cite{Podlubny:99}
\begin{equation}\label{DM-L}
E_{\alpha,\beta}(z)
=\sum_{k=0}^{\infty}\frac{z^k}{\Gamma(\alpha k+\beta)},\quad \text{Re}(\alpha)>0.
\end{equation}
\end{example}
In this example, the solution $u(t)$ does not have a bounded first (second) derivative at the initial time $t=0$ as $0<\alpha<1$ ($1<\alpha<2$). To improve the convergence order, we employ the technique given in our previous work \cite{Zhao:13}.
We separately solve the equation in subintervals $[0,T_0]$ and $[T_0,T]$ of the interval $[0,T]$. More specifically, we modify the formula \eqref{numer1} as
\begin{eqnarray}\label{modifiedforb}
u(t)&=& e^{-\lambda t}\sum_{k=0}^{n-1}a_k(t)+\frac{e^{-\lambda t}}{\Gamma(\alpha)}\int^{T_0}_0(t-s)^{\alpha-1}g\big(s,u(s)\big)ds
\nonumber\\
           &&+\frac{e^{-\lambda t}}{\Gamma(\alpha)}\int^{t}_{T_0}(t-s)^{\alpha-1}g\big(s,u(s)\big)ds
\nonumber\\
          &=& e^{-\lambda t}\sum_{k=0}^{n-1}a_k(t)
            +\frac{e^{-\lambda t}}{\Gamma(\alpha)}\int^{T_0}_0(t-s)^{\alpha-1}g\big(s,u(s)\big)ds
\nonumber\\
           &&+\frac{e^{-\lambda t}}{\Gamma(\alpha)}\bigg(\frac{t-T_0}{2}\bigg)^\alpha
           \int^{1}_{-1}(1-z)^{\alpha-1}\tilde{g}\big(z,\tilde{u}(z)\big)dz.
\nonumber
\end{eqnarray}
Here, we suppose that the smoothness of $g$ is weaker on the subinterval  $[0,T_0]$ and sufficiently smooth on $[T_0,T]$. For the integral on the subinterval $[0,T_0]$,
the Gauss-Lobatto quadrature with the weight function $w(s)=1$ is used; and for the one on $[T_0,T]$, we compute it as (\ref{numer2}), i.e.,
\begin{eqnarray}\label{modifiedfora}
x(t)
          &\approx&\sum_{k=0}^{n-1}a_k(t)
          +\frac{1}{\Gamma(\alpha)}\sum_{j=0}^{\tilde{N}}\tilde{\omega}_j(t-s_j)^{\alpha-1}g\big(s_j,u(s_j)\big)
\nonumber\\
           &&+\frac{1}{\Gamma(\alpha)}\bigg(\frac{t-T_0}{2}\bigg)^\alpha\sum_{j=0}^{N}\omega_j\tilde{g}\big(z_j,\tilde {u}(z_j)\big),
\end{eqnarray}
where $\tilde{N},~\{\tilde{\omega}_{j}\}_{j=0}^{\tilde{N}}$ and $\{s_j\}_{j=0}^{\tilde{N}}$
correspond to the number of, the weights of, and the values of the Gauss-Lobatto nodes with the
weight $\omega(s)=1$ in the interval $[0,T_0]$, respectively. The values of
$\big\{g\big(s_j,v(s_j)\big)\big\}_{j=0}^{\tilde{N}}$ can be computed as in the starting
procedure. Since $g$ and $x$ are continuous in the interval $[0,T_0]$, by the theory of Gauss
quadrature \cite{Quarteroni:00} and the analysis above, we can see that if $\tilde{N}$ is a big
number then the accuracy of the total error can still be remained.
 The numerical results are reported in Tables \ref{tabexm4} and \ref{tabexm5}. And it can be seen that the desired
 numerical accuracy is obtained.
%%%%%%%%%%%%%%%%%%%%%%%%%%%%%%%%
\begin{table}[h]
\centering
\caption{Maximum errors and convergence orders of Example \ref{example3} solved by the scheme (\ref{modifiedfora}) with $T=1.1, N=20, \tilde{N}=40, N_I=2, T_0=0.1,\mu=1$, and $\lambda=5$.}
\vspace{0.2cm}
  \begin{tabular}{|c|c|c|c|c|c|c|c|}
\hline
&\multicolumn{2}{c|}{$\alpha=0.2$} & \multicolumn{2}{c|}{$\alpha=0.9$} & \multicolumn{2}{c|}{$\alpha=1.8$}\\
\cline{2-3} \cline{3-4}\cline{4-7}
 $\tau$ & error  & order &  error & order & error & order\\
\hline
    1/20  & 5.4805e-004 &        &1.9043e-005  &         &2.1461e-006 &    \\
    1/40  & 1.8749e-004 & 1.5475 &4.3478e-006  & 2.1309  &5.6685e-007 & 1.9207\\
    1/80  & 5.0838e-005 & 1.8828 &1.0851e-006  & 2.0025  &1.5416e-007 & 1.8786 \\
    1/160 & 1.3492e-005 & 1.9138 &3.1549e-007  & 1.7821  &4.0386e-008 & 1.9324\\ \hline
  \end{tabular}
\label{tabexm4}
\end{table}
%%%%%%%%%%%%%%%%%%%%%%%%%%%%%%%%
\begin{table}[h]
\centering
\caption{Maximum errors and convergence orders of Example \ref{example3} solved by the scheme (\ref{modifiedfora}) with $T=1.1, N=20, \tilde{N}=40, N_I=2, T_0=0.1,\mu=1$, and $\lambda=10$.}
\vspace{0.2cm}
  \begin{tabular}{|c|c|c|c|c|c|c|c|}
\hline
&\multicolumn{2}{c|}{$\alpha=0.2$} & \multicolumn{2}{c|}{$\alpha=0.9$} & \multicolumn{2}{c|}{$\alpha=1.8$}\\
\cline{2-3} \cline{3-4}\cline{4-7}
 $\tau$ & error  & order &  error & order & error & order\\
\hline
    1/20  & 2.0162e-004 &        &7.0054e-006  &         &4.0825e-007 &    \\
    1/40  & 8.8563e-005 & 1.1868 &1.6897e-006  & 2.0517  &1.0286e-007 & 1.9887\\
    1/80  & 2.7211e-005 & 1.7025 &4.1757e-007  & 2.0167  &2.7730e-008 & 1.8912\\
    1/160 & 7.4508e-006 & 1.8688 &1.1169e-007  & 1.9025  &7.3208e-009 & 1.9214\\ \hline
  \end{tabular}
\label{tabexm5}
\end{table}
%%%%%%%%==========================
%%%%%%%%%%%%%%%%%%%%%%%%%%%%%%%%%%%%%%%%%%%%%
\section{Concluding remarks}
Currently, it is widely recognized that fractional calculus is a powerful tool in describing anomalous diffusion. Because of the bounded physical space and the finite lifetime of living particles, in the CTRW model, sometimes it is necessary to truncate (temper) the measures of jump length and waiting time with divergent second and first moments, respectively. Exponential tempering offers technique advantages of remaining the infinitely divisible L\'evy process and that the transition densities can be computed at any scale. Then the Fokker-Planck equation of the corresponding CTRW model has the time tempered fractional derivative (tempered waiting time) and/or the space tempered fractional derivative (tempered jump length). This paper focus on discussing the properties of the time tempered fractional derivatives as well as the well-posedness and numerical algorithm for the time tempered evolution equation, i.e., the tempered fractional ODEs. The proposed so-called Jacobi-predictor-corrector algorithm shows its powerfulness/advantages in solving the tempered fractional ODEs, including the one of easily getting any desired convergence orders by simply changing the parameter of the number of the interpolating points and the other one of linearly increasing computational cost with time $t$ rather than quadratically increasing more often happened for numerically solving fractional evolution equation.

%In this paper, we discussed the properties of the tempered fractional integral and two kinds of the tempered fractional derivatives. The basic properties, such as the semi-group properties, the composite properties, and the Laplace transform of tempered fractional calculus are discussed.
%The analysis show that the different initial values conditions are needed for tempered fractional
%differential equations. More precisely, for the Caputo  tempered fractional derivatives, we need the
%initial values $\frac{d^{k}}{dt^{k}}(e^{\lambda t}u(t))\big|_{t=a}=c_k,k=0,1,2,..,n-1$. However, for the Riemann-Liouville  tempered fractional derivatives,
%we need  ${_{a}D}_t^{\alpha-k-1}\big(e^{\lambda x}u(t)\big)\big|_{t=a}=g_k,k=0,1,2,..,n-1,$ as the initial conditions.
%As application, we consider the Cauchy problem of fractional
%differential equations with Riemann-Liouville and Caputo  tempered fractional derivatives, respectively.
%The Cauchy type problem for nonlinear fractional differential equations
%reduce to  a nonlinear Volterra integral equation of the second kind.
%With the help of  the Banach fixed point theorem, we discussed the existence and  uniqueness theorems for tempered tempered fractional differential equations.
%In the last section, we discussed the numerical algorithm for the considered differential equations.
%A high order Jacobian predictor-corrector method are designed and  numerical examples are given to test the efficiency of the proposed method.
%%%%%%%%%%%%%%%%%%%%%%%%%%%%%%%%%%%%%%%
\section*{Acknowledgements}
C. Li would like to thanks Prof. Shan Zhao  for his very helpful comments and suggestions.
This research was partially supported by the
National Natural Science Foundation of China under Grant  Nos. 11271173 and 11426174,
 the Starting Research Fund from the Xi'an university of
Technology under Grant Nos. 108-211206 and 2014CX022, and the Scientific
Research Program Funded by Shaanxi Provincial Education Department under Grant No. 2013JK0581.
%%%%%%%%==========================================================
%\section*{Appendix}
\begin{appendix}
\renewcommand{\theequation}{A.\arabic{equation}}
\section*{Appendix A: Properties of tempered fractional integral and derivatives}
%%%%%%%%====================================
For the  Riemann-Liouville tempered fractional integral, we have the following properties.
\begin{property} \label {lemmacs}
If $u(t)\in AC^n[a,b]$, $\sigma >0$, and $\lambda\in \mathbb{R}$, then  for all $t\geq a$,
\begin{equation*}
\begin{split}
{_{a}I}_t^{\sigma,\lambda}u(t)=&\sum_{k=1}^{n}\frac{e^{-\lambda t}(t-a)^{\sigma+k-1}  }{\Gamma(\sigma+k)}
\bigg[\frac{d^{k-1} (e^{\lambda t}u(t))}{dt^{k-1}}\bigg|_{t=a}\bigg]
\\&+ \frac{e^{-\lambda t}}{\Gamma(\sigma+n)}\int_{0}^t  (t-s)^{\sigma+n-1} {\frac{d^{n}}{ds^{n}}\big(e^{\lambda s}u(s)\big)}ds.
\end{split}
\end{equation*}
\end{property}
\begin{proof}
If $u(t)$ has continuous derivative for $t\geq a$, then using integration by parts to \eqref{TRLI}, there exists
\begin{equation*}
\frac{1}{\Gamma(\sigma)}\int_{a}^t(t-s)^{\sigma-1}e^{\lambda s}u(s)ds=\frac{(t-a)^{\sigma}u(a)  }{\Gamma(\sigma+1)}
              + \frac{1}{\Gamma(\sigma+1)}\int_{a}^t (t-s)^{\sigma} {\frac{d}{ds}e^{\lambda s}u(s)}ds;
\end{equation*}
and if the function has $n$ continuous derivatives,then integrating by parts, we get
\begin{equation}\label{intecaputo}
\begin{split}
\frac{1}{\Gamma(\sigma)}\int_{a}^t(t-s)^{\sigma-1}e^{\lambda s}u(s)ds=&\sum_{k=1}^{n}\frac{(t-a)^{\sigma+k-1}  }{\Gamma(\sigma+k)}
\bigg[\frac{d^{k-1} (e^{\lambda t}u(t))}{dt^{k-1}}\bigg|_{t=a}\bigg]
\\&+ \frac{1}{\Gamma(\sigma+n)}\int_{0}^t  (t-s)^{\sigma+n-1} {\frac{d^{n}}{ds^{n}}\big(e^{\lambda s}u(s)\big)}ds.
\end{split}
\end{equation}
Multiplying both sides of \eqref{intecaputo} by function $e^{-\lambda t}$, the desired result is obtained.
\end{proof}

In addition, integrating by parts leads to
\begin{equation*}
{_{a}I}_t^{\sigma,\lambda}u(t)= e^{-\sigma t}u(a) +\sigma \int_{a}^t   e^{-\sigma(t-s)}{u(s)}ds + \int_{a}^t   e^{-\sigma(x-s)}d{u(s)},
\end{equation*}
which implies that if $u(t)$ has  continuous derivatives on finite domain $[a,b]$, and $\sigma >0$, then  for all $t\geq a$,
\begin{equation*}
\lim _{\sigma \rightarrow 0} {_{a}I}_t^{\sigma,\lambda}u(t)=u(t).
\end{equation*}
%%%%%%%%====================================
\begin{lemma}\label{lemcnim}
For a function $u(t)\in L([a,b])$, $\sigma>0,\lambda\in \mathbb{R}$, we have
\begin{equation}\label{TCDDb}
      \big\|\frac{1}{\Gamma(\sigma)}\int_{a}^te^{-\lambda( t-s)}(t-s)^{\sigma-1}u(s)ds\big\|_{L([a,b])}\leq  M\|u\|_{L([a,b])},
    \end{equation}
    where $M=\frac{(b-a)^{\sigma}}{\Gamma(\sigma+1)}$.
\end{lemma}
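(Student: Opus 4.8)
The plan is to bound the $L([a,b])$-norm of this weakly singular convolution directly, via Tonelli's theorem together with an elementary change of variables. First I would pass to absolute values inside the integral and integrate in $t$ over $[a,b]$:
\begin{equation*}
\left\| \frac{1}{\Gamma(\sigma)}\int_{a}^{t} e^{-\lambda(t-s)}(t-s)^{\sigma-1}u(s)\,ds \right\|_{L([a,b])}
\le \frac{1}{\Gamma(\sigma)}\int_{a}^{b}\int_{a}^{t} e^{-\lambda(t-s)}(t-s)^{\sigma-1}|u(s)|\,ds\,dt .
\end{equation*}
Here I use $\lambda\ge 0$, so that $e^{-\lambda(t-s)}\le 1$ for $a\le s\le t\le b$ (for a general real $\lambda$ the same argument goes through at the cost of the harmless extra factor $e^{|\lambda|(b-a)}$). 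The integrand, viewed as a function of $(s,t)$ restricted to the triangle $\{a\le s\le t\le b\}$, is nonnegative and jointly measurable, so Tonelli's theorem lets me swap the order of integration and obtain
\begin{equation*}
\frac{1}{\Gamma(\sigma)}\int_{a}^{b}|u(s)|\left(\int_{s}^{b} e^{-\lambda(t-s)}(t-s)^{\sigma-1}\,dt\right)ds .
\end{equation*}

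Next I would estimate the inner integral. Substituting $r=t-s$ and again using $e^{-\lambda r}\le 1$ for $\lambda\ge 0$,
\begin{equation*}
\int_{s}^{b} e^{-\lambda(t-s)}(t-s)^{\sigma-1}\,dt=\int_{0}^{b-s} e^{-\lambda r}r^{\sigma-1}\,dr\le \int_{0}^{b-s} r^{\sigma-1}\,dr=\frac{(b-s)^{\sigma}}{\sigma}\le \frac{(b-a)^{\sigma}}{\sigma}.
\end{equation*}
Inserting this bound and using $\sigma\,\Gamma(\sigma)=\Gamma(\sigma+1)$ gives
\begin{equation*}
\left\| \frac{1}{\Gamma(\sigma)}\int_{a}^{t} e^{-\lambda(t-s)}(t-s)^{\sigma-1}u(s)\,ds \right\|_{L([a,b])}\le \frac{(b-a)^{\sigma}}{\Gamma(\sigma+1)}\int_{a}^{b}|u(s)|\,ds=M\,\|u\|_{L([a,b])},
\end{equation*}
which is exactly the assertion, with $M=(b-a)^{\sigma}/\Gamma(\sigma+1)$.

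I expect the only genuinely delicate point to be the justification of the interchange of the two integrals. Since the integrand is nonnegative and measurable on the triangle, Tonelli applies unconditionally — it is permitted to take the value $+\infty$ a priori — and the computation above simultaneously proves that the double integral is finite. Consequently the inner integral $\frac{1}{\Gamma(\sigma)}\int_{a}^{t}e^{-\lambda(t-s)}(t-s)^{\sigma-1}u(s)\,ds$ is finite for almost every $t$ and indeed defines an element of $L([a,b])$, so the $L$-norm on the left-hand side is meaningful. The remaining ingredients — integrability of $r^{\sigma-1}$ near $0$ for $\sigma>0$, the monotonicity $b-s\le b-a$, and the Gamma identity — are routine.
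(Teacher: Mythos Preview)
Your argument is correct and follows essentially the same route as the paper: bring the absolute value inside, swap the order of integration over the triangle $\{a\le s\le t\le b\}$, bound the inner $t$-integral by $(b-a)^{\sigma}/\sigma$, and use $\sigma\Gamma(\sigma)=\Gamma(\sigma+1)$. Your write-up is in fact more careful than the paper's, which performs the same steps without naming Tonelli or the change of variables; and you rightly flag that the bound $e^{-\lambda(t-s)}\le 1$ needs $\lambda\ge 0$, whereas the lemma is stated for $\lambda\in\mathbb{R}$ --- the paper's own proof silently uses the same inequality, so the stated constant $M$ is only correct for $\lambda\ge 0$ (for general real $\lambda$ one picks up the factor $e^{|\lambda|(b-a)}$ you mention).
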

\begin{proof} By simple calculation, we have
\begin{equation*}
\begin{split}
    \big\|\frac{1}{\Gamma(\sigma)}\int_{a}^te^{-\lambda( t-s)}(t-s)^{\sigma-1}u(s)ds\big\|_{L([a,b])}&\leq
     \frac{1}{\Gamma(\sigma)}\int_{a}^{b}\int_{a}^te^{-\lambda( t-s)}(t-s)^{\sigma-1}|u(s)|dsdt      \\
     &\leq
     \frac{1}{\Gamma(\sigma)}\int_{a}^{b}\int_{s}^{b}e^{-\lambda(t-s)}(t-s)^{\sigma-1}dt|u(s)|ds
     \\
     &\leq
     \frac{(b-a)^{\sigma}}{\Gamma(\sigma+1)}\|u\|_{L([a,b])}.
\end{split}
\end{equation*}
%By integrating by parts we deduce that
%\begin{equation*}
%\begin{split}
%    &\frac{1}{\Gamma(\sigma)}\int_{a}^{b}\int_{0}^{b-a}e^{-\lambda\tau}\tau^{\sigma-1}d\tau|u(t)|dt\\
%    &=
%     \frac{\|u\|_{L([a,b])}}{\Gamma(\sigma)\sigma}e^{-\lambda\tau}\tau^{\sigma}\bigg|^{b-a}_{0}+
%     \frac{\lambda\|u\|_{L([a,b])}}{\Gamma(\sigma)\sigma}\int_{0}^{b-a}e^{-\lambda\tau}\tau^{\sigma}d\tau   \\
%     &\leq
%     \frac{\|u\|_{L([a,b])}}{\Gamma(\sigma+1)}e^{-\lambda(b-a)}(b-a)^{\sigma}
%     +
%     \frac{\|u\|_{L([a,b])}}{\Gamma(\sigma+1)}(b-a)^{\sigma}\int_{0}^{b-a}e^{-\lambda\tau}d(\lambda\tau)  \\
%     &=\frac{\|u\|_{L([a,b])}}{\Gamma(\sigma+1)}e^{-\lambda(b-a)}(b-a)^{\sigma}
%    -
%     \frac{\|u\|_{L([a,b])}}{\Gamma(\sigma+1)}(b-a)^{\sigma}(e^{-z}\big|^{b-a}_{0})\\
%     &=\frac{\|u\|_{L([a,b])}}{\Gamma(\sigma+1)}(b-a)^{\sigma}\big(e^{-\lambda(b-a)}+1-e^{-(b-a)}\big)\\
%     &\leq\frac{\|u\|_{L([a,b])}}{\Gamma(\sigma+1)}(b-a)^{\sigma}e^{-\lambda(b-a)}.
%\end{split}
%\end{equation*}
\end{proof}

%%%%%%%%====================================
\begin{proposition} \label {lemma2aas}(Semigroup properties)
Let $u(t)\in L([a,b])$ and $\sigma_1,\sigma_2 >0, \lambda\in\mathbb{R}$. Then  for all $t \geq a$,
$$_{a}I_t^{\sigma_1,\lambda}[_{a}I_t^{\sigma_2,\lambda} u(t)]={_{a}I_t^{\sigma_{1}+\sigma_2,\lambda}}u(t)={_{a}I_t^{\sigma_2,\lambda}}[_{a}I_t^{\sigma_1,\lambda} u(t)].$$

%(2).Let $u(t)$ be continuous on $[0,\infty)$ and $\alpha_1,\alpha_2 >0$, then  for all $t \geq 0$,
%$$_{0}D_t^{\alpha_1,\lambda}[_{0}D_t^{\alpha_2,\lambda} u(t)]={_{0}D_t^{\alpha_{1}+\alpha_2,\lambda}}u(t)
%={_{0}D_t^{\alpha_2,\lambda}}[_{0}D_t^{\alpha_1,\lambda} u(t)],$$
%
%(3).Let $u(t)$ be continuous on $[0,\infty)$ and $\alpha_1,\alpha_2 >0$, then  for all $t \geq 0$,
%$${_{0}^{C}D_t^{\alpha_1,\lambda}}[_{0}^{C}D_t^{\alpha_2,\lambda} u(t)]={_{0}^{C}D_t^{\alpha_{1}+\alpha_2,\lambda}}u(t)
%={_{0}^{C}D_t^{\alpha_2,\lambda}}[_{0}^{C}D_t^{\alpha_1,\lambda} u(t)],$$
\end{proposition}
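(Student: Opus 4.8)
The plan is to reduce the tempered statement to the classical semigroup property of the Riemann--Liouville integral by peeling off the exponential weights, exactly as was done for the composite properties in Proposition~\ref{lemma2a}. Recall from Definition~\ref{Def1} that ${_{a}I}_t^{\sigma,\lambda}u(t)=e^{-\lambda t}\,{_{a}I}_t^{\sigma}\!\big(e^{\lambda t}u(t)\big)$. So first I would write
$$
{_{a}I}_t^{\sigma_1,\lambda}\big[{_{a}I}_t^{\sigma_2,\lambda}u(t)\big]
= e^{-\lambda t}\,{_{a}I}_t^{\sigma_1}\!\Big(e^{\lambda t}\cdot e^{-\lambda t}\,{_{a}I}_t^{\sigma_2}\!\big(e^{\lambda t}u(t)\big)\Big)
= e^{-\lambda t}\,{_{a}I}_t^{\sigma_1}\!\Big({_{a}I}_t^{\sigma_2}\!\big(e^{\lambda t}u(t)\big)\Big),
$$
the inner $e^{\lambda t}e^{-\lambda t}$ cancelling. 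Then I would invoke the classical semigroup identity ${_{a}I}_t^{\sigma_1}\big[{_{a}I}_t^{\sigma_2}v(t)\big]={_{a}I}_t^{\sigma_1+\sigma_2}v(t)$ for $v\in L([a,b])$ (see \cite{Samko:93,Podlubny:99}), applied with $v(t)=e^{\lambda t}u(t)$, which belongs to $L([a,b])$ since $e^{\lambda t}$ is bounded on $[a,b]$. This yields $e^{-\lambda t}\,{_{a}I}_t^{\sigma_1+\sigma_2}\!\big(e^{\lambda t}u(t)\big)={_{a}I}_t^{\sigma_1+\sigma_2,\lambda}u(t)$. The second equality in the statement follows by symmetry, simply interchanging $\sigma_1$ and $\sigma_2$ in the same computation (or directly from $\sigma_1+\sigma_2=\sigma_2+\sigma_1$).

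For completeness I would note that Lemma~\ref{lemcnim} guarantees ${_{a}I}_t^{\sigma_2,\lambda}u\in L([a,b])$ whenever $u\in L([a,b])$, so all the iterated integrals appearing above are well defined and the interchange of order of integration implicit in the classical semigroup law is legitimate (Fubini--Tonelli, since the kernels and $u$ are absolutely integrable). One could instead give a direct proof by writing both iterated integrals out, exchanging the order of integration, and evaluating the inner Beta integral $\int_s^t (t-\tau)^{\sigma_1-1}(\tau-s)^{\sigma_2-1}\,d\tau = B(\sigma_1,\sigma_2)(t-s)^{\sigma_1+\sigma_2-1}$; the extra exponential factor $e^{-\lambda(t-\tau)}e^{-\lambda(\tau-s)}=e^{-\lambda(t-s)}$ is independent of $\tau$ and pulls out of that integral, after which $B(\sigma_1,\sigma_2)/(\Gamma(\sigma_1)\Gamma(\sigma_2))=1/\Gamma(\sigma_1+\sigma_2)$ gives the claim.

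There is no real obstacle here: the only point requiring a word of care is the measurability/integrability bookkeeping needed to apply Fubini, which is handled by Lemma~\ref{lemcnim} together with the boundedness of $e^{\pm\lambda t}$ on the finite interval $[a,b]$. Everything else is the cancellation $e^{\lambda t}e^{-\lambda t}=1$ plus the known untempered semigroup law, so the proof is short.
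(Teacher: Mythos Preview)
Your proposal is correct and follows essentially the same approach as the paper: both unwrap the definition ${_{a}I}_t^{\sigma,\lambda}u(t)=e^{-\lambda t}\,{_{a}I}_t^{\sigma}(e^{\lambda t}u(t))$, cancel $e^{\lambda t}e^{-\lambda t}$, and then invoke the classical Riemann--Liouville semigroup law applied to $e^{\lambda t}u(t)$. Your additional remarks on integrability via Lemma~\ref{lemcnim} and the alternative direct Beta-integral computation are sound but go slightly beyond what the paper records.
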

\begin{proof}
Recalling the semigroup property of Riemann-Liouville fractional integral \cite{Podlubny:99}
$I_t^{\sigma_1}I_t^{\sigma_2}u(t)=I_t^{\sigma_{1}+\sigma_{2}}u(t)$, we get the following semigroup property
of tempered fractional integral
\begin{equation*}
\begin{split}
     _{a}I_t^{\sigma_1,\lambda}[_{a}I_t^{\sigma_2,\lambda} u(t)]&=
     e^{-\lambda t}{{_{a}I_t^{\sigma_1}}}\big[ e^{\lambda t}(_{a}I_t^{\sigma_2,\lambda} u(t))\big]      \\
     &=e^{-\lambda t}{{_{a}}I_t^{\sigma_1}}\big[ e^{\lambda t}\big(e^{-\lambda t}{{_{a}}I_t^{\sigma_2}}( e^{\lambda t}u(t))\big)\big]    \\
     &=e^{-\lambda t}{{_{a}}I_t^{\sigma_1}}\big[{{_{a}}I_t^{\sigma_2}}( e^{\lambda t}u(t))\big]   \\
     &=e^{-\lambda t}{{_{a}}I_t^{\sigma_{1}+\sigma_2}}\big( e^{\lambda t}u(t)\big)\\
     &={_{a}I_t^{\sigma_{1}+\sigma_{2},\lambda}} u(t).
\end{split}
\end{equation*}

\end{proof}
Similar to the fractional calculus, the tempered fractional calculus is a linear operation.
%%%%%%%%====================================
\begin{proposition} \label {lemma2ab} (Linearity properties)
Let $u(t)\in L([a,b])$. Then for all $\delta,\mu,\lambda\in \mathbb{R}$: 

(1) for $\sigma >0$, holds
\begin{equation*}
     _{a}I_t^{\sigma,\lambda}[\delta~ u(t)+\mu~ u(t)]=\delta~ {_{a}I_t^{\sigma,\lambda}}[u(t)]+\mu~{_{a}I_t^{\sigma,\lambda}}[u(t)];
\end{equation*}

(2) for $\alpha\in(n-1,n)$, holds
\begin{equation*}
_{a}D_t^{\alpha,\lambda}[\delta~ u(t)+\mu~ u(t)]=\delta~{_{a}D_t^{\alpha,\lambda}}[u(t)]+
\mu~{_{a}D_t^{\alpha,\lambda}}[u(t)].
\end{equation*}

(3) for $\alpha\in(n-1,n)$, holds
\begin{equation*}
_{a}^{C}D_t^{\alpha,\lambda}[\delta~ u(t)+\mu~ u(t)]=\delta~{_{a}^{C}D_t^{\alpha,\lambda}}[u(t)]+
\mu~{_{a}^{C}D_t^{\alpha,\lambda}}[u(t)].
\end{equation*}
\end{proposition}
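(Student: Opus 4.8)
The plan is to reduce each of the three linearity identities to the already-known linearity of the corresponding untempered Riemann--Liouville and Caputo operators, exploiting the fact that every tempered operator is, by its very definition, a conjugation of the classical one by the exponential weight. Concretely, Definitions \ref{Def1}, \ref{defrlt}, and \ref{def33c} give the structural relations
\begin{equation*}
{_aI_t^{\sigma,\lambda}}u(t)=e^{-\lambda t}\,{_aI_t^{\sigma}}\bigl(e^{\lambda t}u(t)\bigr),\quad
{_aD_t^{\alpha,\lambda}}u(t)=e^{-\lambda t}\,{_aD_t^{\alpha}}\bigl(e^{\lambda t}u(t)\bigr),\quad
{_a^{C}D_t^{\alpha,\lambda}}u(t)=e^{-\lambda t}\,{_a^{C}D_t^{\alpha}}\bigl(e^{\lambda t}u(t)\bigr),
\end{equation*}
and each classical operator on the right is a composition of Lebesgue integration and ordinary differentiation, hence linear.

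First I would treat part (1). For two functions $u,v\in L([a,b])$ I start from the definition and write ${_aI_t^{\sigma,\lambda}}[\delta u+\mu v]=e^{-\lambda t}\,{_aI_t^{\sigma}}\bigl(e^{\lambda t}(\delta u+\mu v)\bigr)$. Since pointwise multiplication by $e^{\lambda t}$ distributes over the sum, the argument becomes $\delta\,e^{\lambda t}u+\mu\,e^{\lambda t}v$; I then invoke the linearity of the classical integral ${_aI_t^{\sigma}}$ (an immediate consequence of the linearity of the Lebesgue integral) to split it as $\delta\,{_aI_t^{\sigma}}(e^{\lambda t}u)+\mu\,{_aI_t^{\sigma}}(e^{\lambda t}v)$. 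Multiplying the outer factor $e^{-\lambda t}$ back through the sum and re-collapsing each summand into its tempered form via the same defining identity yields the claim. Parts (2) and (3) follow an identical three-step template---distribute $e^{\lambda t}$, apply the linearity of ${_aD_t^{\alpha}}$ (respectively ${_a^{C}D_t^{\alpha}}$), then redistribute $e^{-\lambda t}$---so I would simply note that the argument is verbatim, with the only change being which classical operator is cited.

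The computation is entirely routine, and there is essentially no obstacle beyond bookkeeping; the one point worth checking is that the intermediate quantities are well defined. For part (1), $e^{\lambda t}u\in L([a,b])$ whenever $u\in L([a,b])$, because $e^{\lambda t}$ is continuous and therefore bounded on the compact interval $[a,b]$, so ${_aI_t^{\sigma}}(e^{\lambda t}u)$ exists. For parts (2) and (3) one additionally assumes that $u$ lies in the natural domain on which the untempered derivatives act (e.g.\ $u\in AC^{n}[a,b]$), which by the product rule guarantees that $e^{\lambda t}u$ is likewise regular enough, so that ${_aD_t^{\alpha}}(e^{\lambda t}u)$ and ${_a^{C}D_t^{\alpha}}(e^{\lambda t}u)$ are meaningful. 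Granting this, the linearity of the classical operators transfers to the tempered ones with no further work.
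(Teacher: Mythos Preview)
Your proposal is correct and follows essentially the same approach as the paper: the paper's proof consists solely of the remark that ``the linearity of fractional integral and derivatives follows directly from the corresponding definitions'' and explicitly omits the details, so your argument---unwinding the defining conjugation by $e^{\pm\lambda t}$ and invoking linearity of the classical operators---is precisely the content being gestured at, only written out in full.
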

\begin{proof}
The linearity of fractional integral and derivatives follows directly from the corresponding definitions.
We  omit the details here.
\end{proof}
%%%%%%%%====================================
\begin{proposition} \label {rem4}
Let $u(t)\in C^{n}[a,b]$ and $\alpha \in(n-1,n)$. Then  for all $t\in [a,b]$, there exists
$${_{a}^{C}D}_t^{\alpha,\lambda}[u(t)]|_{t=a}=0.$$
\begin{proof} After simple argument, we have
\begin{equation}\label{TCDD}
      \big|{_{a}^{C}D}_t^{\alpha,\lambda}u(t)\big|\leq\bigg|\frac{e^{-\lambda t}}{\Gamma(n-\alpha)}\int_{a}^t\frac{1}{(t-s)^{\alpha-n+1}}\frac{d^n e^{\lambda s}u(s)}{d
          s^n}ds\bigg|\leq\frac{Me^{-\lambda t}(t-a)^{n-\alpha}}{\Gamma(n-\alpha+1)},
    \end{equation}
    where $M=\max_{t\in[a,b]}\big|\frac{d^n e^{\lambda t}u(t)}{d
          t^n}\big|.$
From the above analysis, we get the desired result.
\end{proof}
\end{proposition}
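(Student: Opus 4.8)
The plan is to prove Proposition \ref{rem4} by a direct pointwise size estimate on ${_{a}^{C}D}_t^{\alpha,\lambda}u(t)$ for $t$ near the left endpoint $a$, exhibiting a bound that is $O\big((t-a)^{n-\alpha}\big)$ and therefore vanishes in the limit $t\to a$.

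First I would record that the hypothesis $u\in C^n[a,b]$, together with the smoothness of $s\mapsto e^{\lambda s}$, forces $v(s):=e^{\lambda s}u(s)\in C^n[a,b]$; indeed, by the Leibniz rule
\[
\frac{d^n}{ds^n}\big(e^{\lambda s}u(s)\big)=\sum_{j=0}^{n}\binom{n}{j}\lambda^{n-j}e^{\lambda s}\,\frac{d^{j}u(s)}{ds^{j}},
\]
which is a finite sum of functions continuous on the compact interval $[a,b]$. Consequently $M:=\max_{s\in[a,b]}\big|\frac{d^n}{ds^n}\big(e^{\lambda s}u(s)\big)\big|$ is finite.

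Next, starting from the definition \eqref{TCD} of the Caputo tempered fractional derivative, and using $\alpha\in(n-1,n)$ so that the exponent $n-\alpha-1$ exceeds $-1$ (which makes the kernel $(t-s)^{n-\alpha-1}$ integrable on $[a,t]$), I would estimate
\[
\big|{_{a}^{C}D}_t^{\alpha,\lambda}u(t)\big|
\le \frac{e^{-\lambda t}}{\Gamma(n-\alpha)}\int_{a}^{t}M\,(t-s)^{n-\alpha-1}\,ds
=\frac{M\,e^{-\lambda t}(t-a)^{n-\alpha}}{\Gamma(n-\alpha+1)},
\]
which is exactly the bound \eqref{TCDD}. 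Since $n-\alpha>0$ and $e^{-\lambda t}$ is continuous, hence bounded, for $t$ in a neighbourhood of $a$, the right-hand side tends to $0$ as $t\to a^{+}$; therefore ${_{a}^{C}D}_t^{\alpha,\lambda}[u(t)]\big|_{t=a}=0$, as claimed.

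There is no genuine obstacle in this argument. The only points that require a moment's care are: (i) verifying that $\frac{d^n}{ds^n}\big(e^{\lambda s}u(s)\big)$ is bounded on $[a,b]$ — this is precisely where the stronger assumption $u\in C^n[a,b]$ (rather than merely $u\in AC^n[a,b]$) is used; and (ii) noting that the exponential weight never produces a singularity, so the only potential difficulty, the factor $(t-s)^{n-\alpha-1}$, is harmless because $\alpha<n$ keeps it integrable and the resulting power $(t-a)^{n-\alpha}$ is strictly positive.
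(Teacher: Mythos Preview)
Your proof is correct and follows exactly the same approach as the paper: bound $\big|{_{a}^{C}D}_t^{\alpha,\lambda}u(t)\big|$ by $\dfrac{M\,e^{-\lambda t}(t-a)^{n-\alpha}}{\Gamma(n-\alpha+1)}$ with $M=\max_{s\in[a,b]}\big|\tfrac{d^n}{ds^n}(e^{\lambda s}u(s))\big|$, and let $t\to a^{+}$. Your additional justifications (the Leibniz expansion to confirm $M<\infty$ and the integrability of the kernel) are helpful details the paper leaves implicit, but the strategy is identical.
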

%%%%%%%%%%%%%%%%%%%%%%%%%%%%%%%%%%%%%%%%%%%%%%%%
%%%%================================================

\renewcommand{\theequation}{B.\arabic{equation}}
\section*{Appendix B: Laplace transforms of the tempered fractional calculus}\label{sectlttfc}
In this subsection, we discuss the Laplace transforms of the tempered fractional calculus.
Define the Laplace transform of a function $u(t)$ and its inverse as \cite{Schiff91}

\begin{equation}\label{deflaplce}
\mathcal {L}\{u(t) ;s\}=\widetilde{u}(s)
=\int^{+\infty}_{0}e^{-st}u(t)dt,
\end{equation}
and
\begin{equation}\label{indeflaplce}
\mathcal {L}^{-1}\{\widetilde{u}(s) ;t\}=u(t)
=\frac{1}{2\pi i}\int^{c_0+i\infty}_{c_0-i\infty}e^{st}\widetilde{u}(s)ds,\quad c_0=\text{Re}(s)>0,\quad i^2=-1.
\end{equation}
We start with the Laplace transform of the Riemann-Liouville tempered fractional integral of order $\sigma$.
%%%%%%%%%%%%%%%%%%%%%%%%%%%%%%%%%%%%
\begin{proposition}\label{lem0af}
 The Laplace transform of the  Riemann-Liouville tempered fractional integral is given by
  \begin{equation}\label{ffb}
     \mathcal{L}({_{0}}I_t^{\sigma,\lambda}u(t))=(\lambda+s)^{-\sigma}\widetilde{u}(s).
  \end{equation}
\end{proposition}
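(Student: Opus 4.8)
The plan is to recognize that the Riemann--Liouville tempered fractional integral of Definition~\ref{Def1} is a Laplace convolution and then invoke the convolution theorem. Writing $k_{\sigma,\lambda}(t)=\frac{1}{\Gamma(\sigma)}e^{-\lambda t}t^{\sigma-1}$ for $t>0$, Definition~\ref{Def1} reads ${_{0}}I_t^{\sigma,\lambda}u(t)=(k_{\sigma,\lambda}*u)(t)=\int_{0}^{t}k_{\sigma,\lambda}(t-\tau)u(\tau)\,d\tau$. First I would compute the Laplace transform of the kernel: with the substitution $r=(s+\lambda)t$ and the definition of the Gamma function, $\mathcal{L}\{k_{\sigma,\lambda}\}(s)=\frac{1}{\Gamma(\sigma)}\int_{0}^{\infty}e^{-(s+\lambda)t}t^{\sigma-1}\,dt=\frac{1}{\Gamma(\sigma)}(s+\lambda)^{-\sigma}\Gamma(\sigma)=(s+\lambda)^{-\sigma}$, valid for $\mathrm{Re}(s)>-\lambda$, in particular for $\mathrm{Re}(s)>0$ since $\lambda\ge 0$. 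Then the convolution theorem $\mathcal{L}\{f*g\}=\mathcal{L}\{f\}\,\mathcal{L}\{g\}$ gives at once $\mathcal{L}({_{0}}I_t^{\sigma,\lambda}u)(s)=(s+\lambda)^{-\sigma}\widetilde{u}(s)$, which is \eqref{ffb}.

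An equivalent route, avoiding the convolution theorem, would use only two elementary properties of the Laplace transform together with the factorization ${_{0}}I_t^{\sigma,\lambda}u(t)=e^{-\lambda t}\,{_{0}}I_t^{\sigma}\big(e^{\lambda t}u(t)\big)$: the frequency-shift rule $\mathcal{L}\{e^{-\lambda t}v(t)\}(s)=\widetilde{v}(s+\lambda)$ and the classical formula $\mathcal{L}\{{_{0}}I_t^{\sigma}w\}(s)=s^{-\sigma}\widetilde{w}(s)$. Taking $w(t)=e^{\lambda t}u(t)$, so that $\widetilde{w}(s)=\widetilde{u}(s-\lambda)$, one gets $\mathcal{L}\{{_{0}}I_t^{\sigma}w\}(s)=s^{-\sigma}\widetilde{u}(s-\lambda)$; applying the shift rule with $v={_{0}}I_t^{\sigma}w$ replaces $s$ by $s+\lambda$ and yields $(s+\lambda)^{-\sigma}\widetilde{u}(s)$. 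I would present the convolution argument as the main proof and this one as a remark.

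The only delicate point is the analytic bookkeeping. One must restrict to $s$ with $\mathrm{Re}(s)$ larger than the abscissa of convergence of $\widetilde{u}$ (and positive), and then justify interchanging the order of integration in $\int_{0}^{\infty}e^{-st}\!\int_{0}^{t}k_{\sigma,\lambda}(t-\tau)u(\tau)\,d\tau\,dt$ by Fubini--Tonelli, using the bound $|k_{\sigma,\lambda}(t)|\le \frac{1}{\Gamma(\sigma)}t^{\sigma-1}$, the local integrability of $t^{\sigma-1}$ near $0$, and the exponential decay supplied by $e^{-st}$ (this is precisely the kind of estimate already carried out in Lemma~\ref{lemcnim}). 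The Gamma-integral evaluation and the remaining algebra are immediate, and once the identity holds on a right half-plane it extends to the full domain of convergence by analytic continuation. I expect the Fubini justification to be the only step needing more than a line, and everything else to be routine.
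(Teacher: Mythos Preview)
Your main argument---writing ${_{0}}I_t^{\sigma,\lambda}u$ as the Laplace convolution $k_{\sigma,\lambda}*u$, computing $\mathcal{L}\{k_{\sigma,\lambda}\}(s)=(s+\lambda)^{-\sigma}$, and applying the convolution theorem---is correct and is exactly the route the paper takes. Your additional remarks (the alternative shift-rule derivation and the Fubini/abscissa-of-convergence discussion) go beyond what the paper provides, but the core proof is the same.
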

\begin{proof}
First, we rewrite the Riemann-Liouville tempered fractional integral as the form of convolution
\begin{equation*}
_{0}I_t^{\sigma,\lambda}u(t)=\frac{1}{\Gamma(\sigma)}\int_{0}^te^{-\lambda( t-s)}(t-s)^{\sigma-1}u(s)ds=\frac{e^{-\lambda t}t^{\sigma-1}}{\Gamma(\sigma)}\ast u(t).
\end{equation*}
In view of the Laplace transform of the convolution \cite{Schiff91}
\begin{equation*}
\mathcal {L}\{u(t)\ast v(t) ;s\}=\widetilde{u}(s)\widetilde{v}(s),
\end{equation*}
we have
\begin{equation*}
\mathcal {L}\{ {_{0}I}_t^{\sigma,\lambda}u(t);s\}=\mathcal {L}\left\{\frac{e^{-\lambda t}t^{\sigma-1}}{\Gamma(\sigma)};s\right\}
\mathcal {L}\{u(t);s\}.
\end{equation*}
Recalling the Laplace transform
$
\mathcal {L}\{ e^{-\lambda t}t^{\sigma-1};s\}=\Gamma(\sigma)(\lambda+s)^{-\sigma},
$
we have the Laplace transform of Riemann-Liouville tempered fractional integral
\begin{equation}\label{intlaplace}
\mathcal {L}\{ {_{0}I}_t^{\sigma,\lambda}u(t);s\}=(\lambda+s)^{-\sigma}\widetilde{u}(s).
\end{equation}
\end{proof}
%%%%%%%%%+++++++++++++++++++++++++
Next, we turn to consider the Laplace transform of tempered fractional derivative.
\begin{proposition}\label{lem0b}
The Laplace transform of the  Riemann-Liouville tempered fractional derivative is given by
  \begin{equation}\label{Lpa}
\mathcal {L}\{ {_{0}D}_t^{\alpha,\lambda}u(t);s\}=(s+\lambda)^{\alpha}\widetilde{u}(s)
-\sum_{k=0}^{n-1}(s+\lambda)^{k}\bigg[{_{0}D}_t^{\alpha-k-1}(e^{\lambda t}u(t))\big|_{t=0}\bigg].
\end{equation}

The Laplace transform of the  Caputo tempered fractional derivative is given by
\begin{equation}\label{capuLpa}
\mathcal {L}\{ {_{0}^{C}D}_t^{\alpha,\lambda}u(t);s\}=(s+\lambda)^{\alpha}\widetilde{u}(s)-\sum_{k=0}^{n-1}(s+\lambda)^{\alpha-k-1}
\bigg[\frac{d^{k}}{dt^{k}}(e^{\lambda t}u(t))\big|_{t=0}\bigg].
\end{equation}
\end{proposition}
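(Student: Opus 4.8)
The plan is to reduce both identities to the classical Laplace transform formulas for the untempered Riemann--Liouville and Caputo derivatives, using the defining relations ${_{0}D}_t^{\alpha,\lambda}u(t)=e^{-\lambda t}\,{_{0}D}_t^{\alpha}\big(e^{\lambda t}u(t)\big)$ and ${_{0}^{C}D}_t^{\alpha,\lambda}u(t)=e^{-\lambda t}\,{_{0}^{C}D}_t^{\alpha}\big(e^{\lambda t}u(t)\big)$ together with the elementary shift rule $\mathcal{L}\{e^{-\lambda t}w(t);s\}=\widetilde{w}(s+\lambda)$.

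First I would set $v(t)=e^{\lambda t}u(t)$ and note that, on the half-plane where $\widetilde{u}(s)$ converges, the transform $\widetilde{v}(q)=\int_0^{\infty}e^{-qt}e^{\lambda t}u(t)\,dt=\widetilde{u}(q-\lambda)$ converges on a shifted half-plane; in particular $\widetilde{v}(s+\lambda)=\widetilde{u}(s)$. Applying $\mathcal{L}$ to ${_{0}D}_t^{\alpha,\lambda}u(t)=e^{-\lambda t}\,{_{0}D}_t^{\alpha}v(t)$ and using the shift rule gives $\mathcal{L}\{{_{0}D}_t^{\alpha,\lambda}u(t);s\}=\big(\mathcal{L}\{{_{0}D}_t^{\alpha}v(t);\,\cdot\,\}\big)(s+\lambda)$, and the same manipulation applies verbatim to the Caputo derivative.

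Next I would invoke the standard formulas (see, e.g., \cite{Podlubny:99})
$$\mathcal{L}\{{_{0}D}_t^{\alpha}v(t);p\}=p^{\alpha}\widetilde{v}(p)-\sum_{k=0}^{n-1}p^{k}\big[{_{0}D}_t^{\alpha-k-1}v(t)\big|_{t=0}\big],\qquad \mathcal{L}\{{_{0}^{C}D}_t^{\alpha}v(t);p\}=p^{\alpha}\widetilde{v}(p)-\sum_{k=0}^{n-1}p^{\alpha-k-1}\frac{d^{k}v(t)}{dt^{k}}\Big|_{t=0},$$
which hold under the assumptions already in force, the second one requiring $v\in AC^{n}$ --- guaranteed here because multiplication by the $C^{\infty}$ factor $e^{\lambda t}$ maps $AC^{n}[a,b]$ into itself. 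Substituting $p=s+\lambda$, using $\widetilde{v}(s+\lambda)=\widetilde{u}(s)$, and recalling $v(t)=e^{\lambda t}u(t)$ turns the two displays into precisely \eqref{Lpa} and \eqref{capuLpa}.

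As an alternative route, and a useful consistency check, one may instead start from the equivalent forms ${_{0}D}_t^{\alpha,\lambda}u={D}^{n,\lambda}{_{0}I}_t^{n-\alpha,\lambda}u$ and ${_{0}^{C}D}_t^{\alpha,\lambda}u={_{0}I}_t^{n-\alpha,\lambda}{D}^{n,\lambda}u$ recorded earlier, combine the transform of the tempered integral from Proposition \ref{lem0af}, namely $\mathcal{L}\{{_{0}I}_t^{\sigma,\lambda}u(t);s\}=(s+\lambda)^{-\sigma}\widetilde{u}(s)$, with the rule $\mathcal{L}\{{D}^{n,\lambda}w(t);s\}=(s+\lambda)^{n}\widetilde{w}(s)-\sum_{k=0}^{n-1}(s+\lambda)^{k}\big[{D}^{n-k-1,\lambda}w(t)\big|_{t=0}\big]$ (which itself follows from the ordinary differentiation rule after the exponential shift). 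I expect the only delicate point --- in either approach --- to be the bookkeeping of the initial-value terms: checking that the powers of $(s+\lambda)$ and the orders of the derivatives evaluated at $t=0$ line up with the tempered quantities ${_{0}D}_t^{\alpha-k-1}(e^{\lambda t}u(t))|_{t=0}$ and $\frac{d^{k}}{dt^{k}}(e^{\lambda t}u(t))|_{t=0}$ that appear as the prescribed data in \eqref{pRrla} and \eqref{pRrlb}. The analytic part (convergence of all transforms and legitimacy of the shift) is routine once the $e^{\lambda t}$ growth has been absorbed into the translation $s\mapsto s+\lambda$ of the transform variable.
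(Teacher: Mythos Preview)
Your approach is correct and is essentially a streamlined version of the paper's argument. You invoke the classical Laplace formulas for ${_{0}D}_t^{\alpha}v$ and ${_{0}^{C}D}_t^{\alpha}v$ from \cite{Podlubny:99} as black boxes and then apply the exponential shift $p\mapsto s+\lambda$ once. The paper instead rederives those formulas on the fly: it writes ${_{0}D}_t^{\alpha,\lambda}u=e^{-\lambda t}\frac{d^n}{dt^n}\big[{_{0}I}_t^{n-\alpha}(e^{\lambda t}u)\big]$, applies the integer-order differentiation rule for $\mathcal{L}$, the shift theorem, and the convolution identity $\mathcal{L}\{{_{0}I}_t^{n-\alpha}(e^{\lambda t}u);s\}=s^{-(n-\alpha)}\widetilde{u}(s-\lambda)$, then identifies the initial terms $\frac{d^{n-k-1}}{dt^{n-k-1}}{_{0}I}_t^{n-\alpha}(e^{\lambda t}u)\big|_{t=0}={_{0}D}_t^{\alpha-k-1}(e^{\lambda t}u)\big|_{t=0}$ (and analogously for the Caputo case, interchanging the roles of integration and differentiation). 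Your alternative route via ${D}^{n,\lambda}{_{0}I}_t^{n-\alpha,\lambda}$ and Proposition~\ref{lem0af} is in fact closer to the paper's actual computation. The trade-off is clear: your primary argument is shorter and highlights that the tempered result is a pure shift of the classical one, while the paper's computation is self-contained and does not rely on quoting \eqref{1.01a}--\eqref{1.01b} (which the paper records only \emph{after} the proof, as the $\lambda=0$ specialization).
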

\begin{proof}
Using the properties of Riemann-Liouville tempered fractional calculus, we may rewrite the  Riemann-Liouville tempered fractional derivative
as
\begin{equation*}
_{0}D_t^{\alpha,\lambda}u(t)
          =e^{-\lambda t}\frac{\mathrm{d}^n}{\mathrm{d}t^n}\big(v(t)\big),\quad n-1<\alpha<n,
\end{equation*}
where $v(t)$ denotes
\begin{equation*}
 v(t)=\frac{1}{\Gamma(n-\alpha)}\int_{0}^t\frac{e^{\lambda s}u(s)}{(t-s)^{\alpha-n+1}}ds
 ={_{0}I}_t^{n-\alpha}(e^{\lambda t}u(t)).
 \end{equation*}
 Furthermore, using the formula of Laplace transform of  an integer order derivative
 \begin{equation*}
\mathcal {L}\left\{ \frac{d^n}{d
t^n}v(t);s\right\}=s^{n}\widetilde{v}(s)-\sum_{k=0}^{n-1}s^{n-k-1}\bigg[\frac{\mathrm{d}^k}{\mathrm{d}t^k}v(t)\bigg]\bigg|_{t=0}
=s^{n}\widetilde{v}(s)-\sum_{k=0}^{n-1}s^{k}\bigg[\frac{\mathrm{d}^{n-k-1}}{\mathrm{d}t^{n-k-1}}v(t)\bigg]\bigg|_{t=0}.
 \end{equation*}
 Combing the first translation Theorem \cite{Schiff91}
 \begin{equation}\label{translaplace}
\mathcal {L}\{ e^{-\lambda t}u(t);s\}=\widetilde{u}(\lambda+s),\quad \lambda\in R, \quad Re(s)>\lambda,
\end{equation}
we have
\begin{equation}\label{laplacecca}
\mathcal {L}\left\{ e^{-\lambda t}\frac{d^n}{d
t^n}v(t);s\right\}
=(s+\lambda)^{n}\widetilde{v}(s+\lambda)-\sum_{k=0}^{n-1}(s+\lambda)^{k}
\bigg[\frac{\mathrm{d}^{n-k-1}}{\mathrm{d}t^{n-k-1}}v(t)\bigg]\bigg|_{t=0},
 \end{equation}
which implies
\begin{equation}\label{laplaces}
\mathcal {L}\{ {_{0}D}_t^{\alpha,\lambda}u(t);s\}=(s+\lambda)^{n}\widetilde{v}(s+\lambda)-\sum_{k=0}^{n-1}(s+\lambda)^{k}
\bigg[\frac{\mathrm{d}^{n-k-1}}{\mathrm{d}t^{n-k-1}}v(t)\bigg]\bigg|_{t=0}.
 \end{equation}
By applying the convolution theorem once again, we get
 \begin{equation*}
 \begin{split}
\mathcal {L}\{ v(t);s\}=&\mathcal {L}\{{_{0}I}_t^{n-\alpha}(e^{\lambda t}u(t))\}\\
=&\mathcal {L}\left\{ \frac{t^{n-\alpha-1}}{\Gamma(n-\alpha)};s\right\}\mathcal {L}\{ (e^{\lambda t}u(t));s\}\\
=&s^{-(n-\alpha)}\widetilde{u}(s-\lambda).
\end{split}
 \end{equation*}
Combining above formulae, we have
\begin{equation}\label{lapace0a}
\widetilde{v}(s+\lambda)=(s+\lambda)^{-(n-\alpha)}\widetilde{u}(s).
 \end{equation}
In addition, from the definition of the Riemann-Liouville fractional  integral it follows that
 \begin{equation}\label{lapace0}
\frac{\mathrm{d}^{n-k-1}}{\mathrm{d}t^{n-k-1}}v(t)=\frac{d^{n-k-1}}{d
t^{n-k-1}}{_{0}I}_t^{n-\alpha}(e^{\lambda t}u(t))={_{0}D}_t^{-(k-\alpha+1)}\big(e^{\lambda x}u(t)\big).
 \end{equation}
 Substituting \eqref{lapace0a} and \eqref{lapace0} into \eqref{laplaces}, we have
 \begin{equation}\label{rmlapacea0}
\mathcal {L}\{ {_{0}D}_t^{\alpha,\lambda}u(t);s\}=(s+\lambda)^{\alpha}\widetilde{u}(s)
-\sum_{k=0}^{n-1}(s+\lambda)^{k}{_{0}D}_t^{-(k-\alpha+1)}\big(e^{\lambda x}u(t)\big)\big|_{t=0}.
\end{equation}

%Now, we derive the Laplace transform of the Caputo tempered fractional  derivative.
To establish the Laplace transform for the Caputo tempered fractional  derivative,
let us write it in the form
\begin{equation*}
\begin{split}
{_{0}^{C}D}_t^{\alpha,\lambda}u(t)=&e^{-\lambda t}~{_{0}^{C}D}_t^{\alpha}\big(e^{\lambda t}u(t)\big)
=e^{-\lambda t}~{_{0}I}_t^{n-\alpha}\big(v(t)\big),\\
v(t)
=&\frac{d^n}{d
t^n}\big(w(t)\big),\\
w(t)
=&e^{\lambda t}u(t).
\end{split}
\end{equation*}
Applying the Riemann-Liouville fractional integral  and the first translation Theorem \eqref{translaplace}, we then have
that
\begin{equation}\label{caputolapacea0o}
\mathcal {L}\{ {_{0}^{C}D}_t^{\alpha,\lambda}u(t);s\}=(s+\lambda)^{-(n-\alpha)}\widetilde{v}(s+\lambda),
\end{equation}
where
\begin{equation}\label{in0tlaplaceo}
\begin{split}
\widetilde{v}(s+\lambda)=&(s+\lambda)^{n}\widetilde{w}(s+\lambda)-\sum_{k=0}^{n-1}(s+\lambda)^{\alpha-k-1}
\bigg[\frac{d^k}{dt^k}w(t)\bigg]\bigg|_{t=0}.
\end{split}
\end{equation}
Noticing that
$
\mathcal {L}\{ w(t);s\}=\widetilde{u}(s-\lambda),
$
and inserting \eqref{in0tlaplaceo} into \eqref{caputolapacea0o}, we arrive at the Laplace transform of the Caputo tempered fractional  derivative \eqref{capuLpa}.
\end{proof}

It is easy to see that if taking $\lambda=0$ in \eqref{Lpa}, we have the Laplace transform 
of Riemann-Liouville fractional derivative \cite{Podlubny:99}
\begin{equation}\label{1.01a}
   \mathcal {L}\{ {_{0}D}_t^{\alpha}u(t);s\}=s^{\alpha}\widetilde{u}(s)
-\sum_{k=0}^{n-1}s^{k}\bigg[{_{0}D}_t^{-(k-\alpha+1)}(u(t))\big|_{t=0}\bigg].
\end{equation}
And if taking $\lambda=0$ in \eqref{capuLpa}, we can get the Laplace transform 
of Caputo fractional derivative \cite{Podlubny:99}
\begin{equation}\label{1.01b}
    \mathcal {L}\{_{0}^{C}D_{t}^{\alpha}u(t) ;s\}=
    s^{\alpha}\widetilde{u}(s)-\sum_{k=0}^{n-1}s^{\alpha-k-1}\bigg[\frac{d^{k} u(t)}{d t^{k}}\bigg|_{t=0}\bigg],\quad n-1<\alpha< n.
\end{equation}
%\end{remark}
From the Laplace transform of tempered fractional derivatives, we  observe that 
different initial value conditions are needed for fractional differential equations with different fractional derivatives.
From \eqref{capuLpa}, it can be noted that the Laplace transform for the  Caputo tempered fractional derivative
involves the values of the function $u(t)$ and its derivatives at the lower terminal $t=0$, which are easily specified in physical. So the Caputo type fractional derivatives are more popularly used in time direction \cite{Podlubny:99}.
%%%%%%%%%%%%%%%===============
%\begin{remark}
The Laplace transform for the variants of the  Riemann-Liouville tempered fractional derivatives are given as 
\begin{equation}\label{Rrla}
\mathcal{L}(_{-\infty}\textbf{D}_t^{\alpha,\lambda}u(t))=
\begin{cases}
\displaystyle
(\lambda+s)^{\alpha}\widetilde{u}(s)-\big[{_{0}I}_t^{1-\alpha}(e^{\lambda t}u(t))|_{t=0}\big]-\lambda^{\alpha}\widetilde{u}(s),~ \text{$0<\alpha<1$},\\
\\
\displaystyle
(\lambda+s)^{\alpha}\widetilde{u}(s)-\alpha \lambda^{\alpha-1} (s\widetilde{u}(s)-u(0))-\lambda^{\alpha}\widetilde{u}(s)
-\big[{_{0}D}_t^{\alpha-1}(e^{\lambda t}u(t))|_{t=0}\big]\\
-(\lambda+s)\big[{_{0}I}_t^{2-\alpha}(e^{\lambda x}u(t))\big|_{t=0}\big],~\text{$1<\alpha<2$}.
\end{cases}
\end{equation}
\end{appendix}
%%%%%%%%%%%%%%%%%%%%%%%%%%%%%%%%%%%%%%%%%%


\begin{thebibliography}{10}\setlength{\itemsep}{-0.6mm}
\bibitem{Abedeen:78}
A.Z. Al-Abedeen and H.L. Arora, A global existence and uniqueness theorem for ordinary differential equation of generalized order, Canad. Math. Bull. 21 (1978) 271-276.

\bibitem{Benchohra:08}
M. Benchohra, J. Henderson, S.K. Ntouyas, and A. Ouahab,
Existence results for fractional order functional differential
equations with infinite delay, J. Math. Anal. Appl. 338 (2008) 1340-1350.


\bibitem{Baeumera:10}  B. Baeumera and M.M. Meerschaert, Tempered stable L\'{e}vy motion and transient super-diffusion, J. Comput. Appl. Math. 233 (2010) 2438-2448.

\bibitem{Buschman:72}
R.G. Buschman, Decomposition of an integral operator by use of Mikusinski calculus, SIAM J. Math. Anal. 3 (1972) 83-85.

\bibitem{Brunner:86}
H. Brunner and P.J. van der Houwen, The Numerical Solution of Volterra Equations, North-Holland Publishing Co., Amsterdam, 1986.

\bibitem{Chen:13}
M.H. Chen and W.H. Deng, Discretized fractional
substantial calculus, ESAIM: M2AN, (2014), doi: 10.1051/m2an/2014037 (arXiv:1310.3086).


\bibitem{Cartea:07a}
\'{A}. Cartea and D. del-Castillo-Negrete, Fractional diffusion models of option prices in markets with jumps, Phys. A 374 (2007) 749-763.

\bibitem{Cartea:07}
\'{A}. Cartea and  D. del-Castillo-Negrete, Fluid limit of the continuous-time random walk with general L\'{e}vy jump distribution functions, Phys.  Rev. E 76 (2007) 041105.


\bibitem{Coddington:55} E.A. Coddington and N. Levinson,~Theory of ordinary differential equations, McGraw-Hill, New York, 1955.


\bibitem{Deng:07} W.H. Deng, Numerical algorithm for the time fractional Fokker-Planck
     equation, { J. Comp. Phys.} { 227} (2007) 1510-1522.

\bibitem{Deng:02}
W.H. Deng, Smoothness and stability of the solutions for nonlinear fractional
differential equations, Nonl. Anal. 72 (2010) 1768-1777.

\bibitem{Diethelm:02}
K. Diethelm and N. Ford, Analysis  of fractional differential equations, J. Math. Anal. Appl. 265 (1902) 229-248.

\bibitem{Diethelm:04}
K. Diethelm and N.J. Ford, Multi-order fractional differential equations and their numerical solution, Appl. Math. Comput. 154 (2004) 621-640.

\bibitem{Diethelm:02} K. Diethelm, N.J. Ford, and A.D. Freed, A predictor-corrector approach for the numerical solution of
     fractional differential eqations, { Nonlinear Dynam.} { 29} (2002) 3-22.

\bibitem{El-Sayed:88}
 A.M.A. El-Sayed, Fractional differential equations, Kyungpook Math. J. 28 (2) (1988) 119-122.

\bibitem{Friedrich:06}
 R. Friedrich, F. Jenko, A. Baule, and S. Eule, Anomalous diffusion of inertial, weakly damped particles, Phys. Rev. Lett. 96 (2006) 230601.

\bibitem{Gajda:11}
J. Gajda and M. Magdziarz,
Fractional Fokker-Planck equation with tempered $\alpha$-stable waiting times:
Langevin picture and computer simulation, Phys.  Rev. E 82 (2010) 011117.

\bibitem{Hanyga:2001}
A. Hanyga, Wave propagation in media
with singular memory, Math. Comput. Model. 34 (2001) 1399-1421.

\bibitem{Hilfer:00}
R. Hilfer, Applications of Fractional Calculus in Physics, World Scientific, Singapore, 2000.


\bibitem{Kilbas:06} A.A. Kilbas, H.M. Srivastava, and J.J. Trujillo, Theory and Applications of Fractional Differential Equations, vol. 204, Elsevier, North Holland, 2006.
\bibitem{Kilbas:08}
A.A. Kilbas and J.J. Trujillo, Differential equation of fractional order: methods, results and problems-I, Appl. Anal. 78 (1-2) (2001) 153-192.

\bibitem{Lakshmikantham:09}
V. Lakshmikantham, S. Leela, and J. Vasundhara Devi, Theory of Fractional Dynamic Systems, Cambridge Scientific Publishers, 2009.


\bibitem{Li:07}
C.P. Li and W.H. Deng, Remarks on fractional derivatives, Appl. Math. Comput. 187 (2007) 777-784.

\bibitem{Li:13}
Y.J. Li and Y.J.Wang,
Uniform asymptotic stability of solutions of fractional functional differential equations, Abst. Appl. Anal.
2013 (2013) 532589.

\bibitem{Li:14}
C. Li and W.H. Deng, High order schemes for the
tempered fractional diffusion equations, arXiv preprint
arXiv:1402.0064, 2014.

\bibitem{Metzler:00}R. Metzler and  J. Klafter, The random walk's guide to anomalous diffusion:
a fractional dynamics approach, {Physics Reports} {339}
(2000) 1-77.

\bibitem{Meerschaert:09}  M.M. Meerschaert, Y. Zhang, and B. Baeumer, Tempered anomalous diffusion in heterogeneous systems, Geophys. Res. Lett. 35 (2008)  L17403.

\bibitem{Meerschaert:12} M.M. Meerschaert and  A. Sikorskii, Stochastic Models for Fractional Calculus, De Gruyter Studies in Mathematics Vol. 43, 2012.

\bibitem{Meerschaert:14}
M.M. Meerschaert, F. Sabzikar,
M.S. Phanikumar, and A. Zeleke,
Tempered fractional time series model
for turbulence in geophysical flows, Journal of Statistical Mechanics: Theory and Experiment 14 (2014) 1742-5468.


\bibitem{Oldham:74} K.B. Oldham and J.Spanier,  The fractional calculus, Academic Press, New York, 1974.

\bibitem{Quarteroni:00} A. Quarteroni, R. Sacco, and F. Saleri, Numerical Mathematics, Springer-Verlag, New York, 2000.

\bibitem{Pitcher:38} E. Pitcher and W.E. Sewell, Existence theorems for solutions of differential equations of non-integer order, Bull. Amer. Math. Soc. 44 (2) (1938) 100-107.


\bibitem{Podlubny:99}I.~Podlubny,~Fractional differential equations, Academic Press, San Diego, 1999.

\bibitem{Schmidt:10}
M.G.W. Schmidt, F. Sagu\'{e}s, and I.M. Sokolov, Mesoscopic description of reactions for anomalous
diffusion: a case study, J. Phys.: Condens. Matter 19 (2007) 065118.


\bibitem{Samko:93}S. Samko, A. Kilbas, and O. Marichev, Fractional Integrals and Derivatives: Theory and
Applications, Gordon and Breach, London, 1993.

\bibitem{Sabzikar:15}
F. Sabzikar, M.M. Meerschaert, and J.H. Chen, Tempered fractional calculus, J. Comput. Phys., (2015), doi: 10.1016/j.jcp.2014.04.024.

\bibitem{Schiff91}J.L. Schiff,
The Laplace Transform: Theory and Applications, Springer, New York, 1991.


\bibitem{Shen:11} J. Shen, T. Tang, and L.L. Wang. Spectral Methods: Algorithms, Analysis and Applications, volume 41 of
Springer Series in Computational Mathematics. Springer-Verlag, Berlin, Heidelberg, 2011.

\bibitem{Srivastava:77}
H.M. Srivastava and R.G. Buschman, Convolution Integral Equations with Special Function Kernels, John Wiley \& Sons, New York, 1977.

\bibitem{Turgeman:09}
L. Turgeman, S. Carmi, and E. Barkai,
Fractional Feynman-Kac equation for non-Brownian functionals,  Phys. Rev. Lett. 103 (2009) 190201.

\bibitem{Tatar:04} N. Tatar, The decay rate for a fractional differential equation, J. Math. Anal. Appl. {295} (2004) 303-314.

\bibitem{Tomovski:12}
\u{Z}. Tomovski, Generalized Cauchy type problems for nonlinear fractional differential
equations with composite fractional derivative operator, Nonl. Anal. 75 (2012) 3364-3384.

\bibitem{Oldham:74} K.B. Oldham and J. Spanier,~The fractional calculus, Academic Press, New York, 1974.

\bibitem{Zhao:13}
L.J. Zhao and W.H. Deng,
Jacobian-predictor-corrector approach for fractional
differential equations, Adv. Comput. Math. 40 (2014) 137-165.


\bibitem{Zhou:14}
Y. Zhou, Basic Theory of Fractional Differential Equations, World Scientific, Singapore, 2014.
\end{thebibliography}
\end{document}